\def\ab{\mathbf{a}}
\def\A{\mathbb A}
\def\a{\alpha}
\def\m{\mathfrak m}
\def\b{\beta}
\def\C{\mathbb C}
\def\Z{\mathbb Z}
\def\G{\mathbb G}
\def\Q{\mathbb Q}
\def\V{V}
\def\O{\mathcal O}
\def\epsilon{\varepsilon}
\DeclareMathOperator{\Lt}{Lt}
\DeclareMathOperator{\Spec}{Spec}
\DeclareMathOperator{\depth}{depth}
\DeclareMathOperator{\codim}{codim}
\DeclareMathOperator{\Hilb}{Hilb}
\DeclareMathOperator{\res}{res}
\DeclareMathOperator{\ord}{ord}
\newcommand{\kk}{\Bbbk}%
\newtheorem{lem}{Lemma}[section]
\newtheorem{defi}[lem]{Definition}
\newtheorem{defi-not}[lem]{Definition/Notation}
\newtheorem{twr}[lem]{Theorem}
\newtheorem{stwr}[lem]{Corollary}
\newtheorem{exm}[lem]{Example}
\newtheorem{quest}[lem]{Question}
\newtheorem{remark}[lem]{Remark}
\newcommand{\thefuturetheoreminner}{} 
\newcommand{\thefuturexminner}{} 
\newtheorem*{twri}{Theorem}
\newtheorem*{exmi}{Example}
\newtheorem*{questi}{Question}
\newtheorem*{notation}{Notation}
	\title[Hilbert-Burch matrices and explicit families on $\Hilb^d(\A^2)$]{Hilbert-Burch matrices and explicit torus-stable families of finite subschemes of $\A^2$}
\author{Piotr Oszer}	
\address{Institute of Mathematics, University of Warsaw, Banacha 2, 02-097
	Warsaw, Poland}
\email{po394403@mimuw.edu.pl}
\thanks{The author is supported by National Science Centre grant 2020/39/D/ST1/00132. The author would like to thank the Isaac Newton Institute for Mathematical Sciences, Cambridge, for support and hospitality during the programme New equivariant methods in algebraic and differential geometry, where work on this paper was undertaken (supported by EPSRC grant EP/R014604/1). The author would also like to thank Lie Fu, Joachim Jelisiejew, Jakub Koncki, Michael Thaddeus and Dimitri Wyss for helpful discussions and suggestions.
The author thanks the anonymous referee for helpful comments.
}
\begin{document}

	\maketitle
	\begin{abstract}
		We give an explicit description of the Białynicki-Birula cells on the Hilbert scheme of points on $\A^2$ with isolated fixed points, with the Hilbert-Burch matrices as a main tool. If the fixed point locus is positive dimensional we obtain an \'etale rational map to the cell. We prove Conjecture \cite[Conjecture 4.2]{RoserWinz-2} which we realize as a special case of our construction. We also show examples when the construction provides a rational \'etale map to the Hilbert scheme which is not contained in any Białynicki-Birula cell. Finally, we give an explicit description of the formal deformations of any ideal in the Hilbert scheme of points on the plane.
	\end{abstract}
	\section{Introduction}
		The Hilbert scheme of $d$-points on a surface is a moduli space of closed subschemes of finite length lying on the surface. It is a classical object in algebraic geometry, very well studied and admits a large number of applications, e.g. \cite{ES1,Evain,ESG,Haiman-catalan}.
		In this article, we are interested in the Hilbert scheme of points on the plane. 
		\vskip 0.2 cm
		Ellingsrud and Str\o mme in \cite{ES1} have computed Betti numbers of the Hilbert scheme by constructing a particular basis of homology coming from the Białynicki-Birula cells. To do so, they have studied $\G_m^2$-fixed points on $\Hilb^d(\A^2)$, the fixed points on the level of algebra are the monomial ideals of colength $d$.
		They described the tangent space in terms of first-order deformations of the Hilbert-Burch matrix. Such a matrix induces a free resolution of the monomial ideal.
		Subsequently in \cite{ES-2} they have exploited those techniques further and claimed to describe the Białynicki-Birula cells explicitly. Some issues with arguments in that paper were pointed out and addressed in \cite{ES-2fix}, as a consequence the description for one-dimensional tori with isolated fixed points was completed.
		Prior to that, Laurent Evain, coming from a different perspective, has fixed these issues in some cases, see \cite[Remark 16]{Evain}.
		\vskip 0.2 cm
		Since the foundational work of Ellingsrud and Str\o mme  many people have studied points of this Hilbert scheme explicitly, see \cite{Aldo-HB,Alexandru,RoserWinz-1,RoserWinz-2}. The main method revolves around deforming the Hilbert-Burch matrix for a given monomial ideal.
		This way, for a given monomial ideal $E$, Conca and Valla described  Gr\"obner cells centred at $E$ for the lexicographic term order.
		Later Roser Homs and Anna-Lena Winz, motivated by the study of the punctual Hilbert scheme, studied Gr\"obner cells for the negative degree lexicographical monomial order. In \cite{RoserWinz-2} they have described explicitly those cells for lex monomial ideals, see \cite[Theorem 5.7]{RoserWinz-1}. They have conjectured that the description extends to Gr\"obner cells for any monomial ideal $E$, see \cite[Conjecture 4,2]{RoserWinz-2}.
		\vskip 0.cm
		It remained open to construct all the Białynicki-Birula/Gr\"obner cells explicitly in terms of Hilbert-Burch matrices. There is also a question when such constructions provide us with explicit open subschemes of the Hilbert scheme.
		In this paper, we answer these questions.\\ Choose a monomial ideal $E=(x^{t},x^{t-1}y^{m_1},\ldots, xy^{m_{t-1}}, y^{m_t})$ and take ${d_i=m_{i}-m_{i-1}}$, with the convention that $m_0=0$. We consider the \emph{spread-out} matrix:
			$$ \mathcal M :=
		\begin{pmatrix}
			y^{d_1}+a_{1,1} & a_{1,2} & a_{1,3} & \ldots & a_{1,t} \\
			-x+a_{2,1} & y^{d_2}+a_{2,2} & a_{2,3} & \ldots & a_{2,t} \\
			a_{3,1} & -x+a_{3,2} & y^{d_3}+a_{3,3} & \ldots & a_{3,t} \\
			
			\vdots & \vdots & \vdots  & \ddots & \vdots \\
			a_{t,1} & \ldots & \ldots  & -x+a_{t,t-1} & y^{d_t}+a_{t,t} \\
			a_{t+1,1} & \ldots & \ldots  & a_{t+1,t-1} & -x+a_{t+1,t} \\
		\end{pmatrix},
		$$	
		where $a_{i,j} \in \kk[y]$ and $$\begin{cases}
			\deg_y(a_{i,j})< 	d_j, &  \text{for } i>j\\
			\deg_y(a_{i,j})< 	d_i, & \text{for } i\leq j,
		\end{cases}$$ where $a_{i,j}=\sum_{k} a_{i,j,k}y^k$, we introduce a notation $\ab= (a_{i,j,k})$. The Hilbert-Burch matrix associated to $E$ is the matrix with coefficients in $\kk[x,y]$ obtained from $\mathcal M$ by putting all parameters $a_{i,j,k}$ to zero.
		The spread-out matrix defines a family $\pi$ given by the ideal of maximal minors $I_t(\mathcal M)$
			$$\begin{tikzcd}
			\V(I_t(\mathcal M)) \arrow[r, hook, "cl"] \arrow[rd, "\pi"] & \A^2 \times \Spec(k[\ab])  \arrow[d] \\
			& \Spec(\kk[\ab]).                        
			  \end{tikzcd}$$
		 We show that this family controls all infinitesimal deformations of the ideal $E$.
		\begin{twri}[{Theorem~\ref{infinitesimal-phi}}]
		Let $E$ be a monomial ideal of colength $d$. The matrix $\mathcal M$ via the family $\pi$ induces an isomorphism of spectra of completed local rings
		$$\hat d\phi_E\colon \Spec(\kk[[\ab]]) \to \Spec(\widehat{\O_{\Hilb^d,[E]}}).  $$
		\end{twri}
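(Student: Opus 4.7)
The plan has three steps: (i) show that $\pi$ is a flat family of length-$d$ subschemes so that $\mathcal{M}$ produces a morphism $\phi_E \colon \Spec(\kk[\mathbf{a}]) \to \Hilb^d(\A^2)$ with $\phi_E(0) = [E]$; (ii) compare dimensions of source and target; (iii) prove injectivity of $d\phi_E$ at the origin. Since $\Hilb^d(\A^2)$ is smooth of dimension $2d$ by Fogarty's theorem and $\Spec(\kk[[\mathbf{a}]])$ is regular, an injective (hence bijective, by dimension count) tangent map at the origin between smooth pointed formal schemes of the same dimension gives the desired isomorphism on completed local rings.

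For step (i), flatness follows from the Hilbert--Burch theorem: one checks that $I_t(\mathcal{M})$ has codimension $2$ in $\kk[x,y]$ at every closed point of $\Spec(\kk[\mathbf{a}])$, so the quotient is Cohen--Macaulay, and a constancy-of-Hilbert-function argument exploiting the near-triangular shape of $\mathcal{M}$ shows that the colength equals $d$ all along the family, not just at $\mathbf{a} = 0$. For step (ii), a direct enumeration of coefficients together with the Abel-summation identity
\[
\sum_{i=1}^{t} (t-i+1)\, d_i \;=\; \sum_{i=1}^{t} m_i \;=\; d
\]
(using $d_i = m_i - m_{i-1}$, $m_0=0$, and the formula for the colength of $E$) gives $\dim \Spec(\kk[\mathbf a]) = 2d$.

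For step (iii), tangent vectors at $[E]$ correspond, via the Hilbert--Burch correspondence, to first-order deformations of the matrix $M_0 := \mathcal{M}|_{\mathbf{a}=0}$ modulo trivial deformations of the form $A M_0 - M_0 B$ with $A \in M_{(t+1) \times (t+1)}(\kk[x,y])$ and $B \in M_{t \times t}(\kk[x,y])$. The degree bounds $\deg_y a_{i,j} < d_j$ for $i > j$ and $\deg_y a_{i,j} < d_i$ for $i \leq j$ are calibrated so that the subspace of spread-out perturbations of $M_0$ forms a direct-sum complement to the trivial-deformation subspace inside $M_{(t+1) \times t}(\kk[x,y])$.

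The main obstacle is exactly this transversality claim: that every first-order deformation of $M_0$ has a unique representative whose entries obey the spread-out degree bounds. This is a linear-algebra statement about the interplay of the monomial shape of $M_0$ with the row and column operations $A$ and $B$, and it has to be verified entry by entry, tracking which $y^k$ cancellations a lower-triangular $A$ or $B$ can and cannot produce. Morally it formalises the Ellingsrud--Strømme tangent-space calculation in the present spread-out notation, and the delicate point is to argue that the degree constraints give no freedom beyond the chosen basis and no identifications among distinct parameters $a_{i,j,k}$.
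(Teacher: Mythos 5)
Your step (i) is false as stated, and this is precisely the phenomenon the paper is organized around. The ideal $I_t(\mathcal M)$ does \emph{not} have codimension $2$ at every closed point of $\Spec(\kk[\ab])$, and the colength is \emph{not} constant along the family: for $E=(y^2,x^2)$ the fibre over $a_{2,1,1}a_{1,2,1}=1$ contains a line (Example \ref{exm-nonflatness}), and for $E=(y^3,x^2)$ the colength jumps from $6$ to at least $7$ on a dense open subset of a coordinate plane in the base (Example \ref{exm-non-finitness}), so $\pi$ is not even finite over any neighbourhood of the origin there. Hence no global morphism $\phi_E\colon\Spec(\kk[\ab])\to\Hilb^d(\A^2)$ exists in general, and a "constancy-of-Hilbert-function argument" cannot succeed. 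Fortunately the theorem only concerns the completed local ring at the origin, and there the correct (and easier) input is the Schaps part of the Hilbert--Burch Theorem \ref{Hilbert-Burch}: over any Artinian thickening of the origin, \emph{every} lift of $M_E$ automatically gives a flat family with no codimension hypothesis to check, and finiteness follows from Nakayama. One then still has to pass from the tower of Artinian quotients to $\kk[[\ab]]$ itself; the paper does this with the local criterion for flatness and a Mittag--Leffler argument showing $R[[\ab]]/\hat E$ is finite over $\kk[[\ab]]$. Your proposal is missing this limit step entirely, and replacing it with global flatness is not a repair, since global flatness is false.

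Your step (iii) is also not a proof: you correctly isolate the transversality claim (that the spread-out perturbations give a complement to the trivial deformations $AM_0-M_0B$) as the crux, but you do not establish it, and this entry-by-entry verification is exactly where the subtlety lives. The paper takes a different and shorter route in Theorem \ref{iso-tangent}: it never computes the quotient by trivial deformations, but instead uses $\G_m^2$-equivariance to decompose the tangent map into weight spaces for the $x$-grading, reducing to matrices with a single "third diagonal" $(c_1,\dots,c_{t-w_x})$, and then checks injectivity by exhibiting, for the lowest index $i_0$ with $c_{i_0}\neq 0$, an explicit maximal minor whose image is nonzero modulo $E$ because $\deg_y(c_{i_0})<d_{i_0}$ forces it outside $E$. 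Your dimension count in step (ii) is correct and agrees with Lemma \ref{tangent-ES}, but as it stands the proposal has two genuine gaps: the existence of the map on completed local rings rests on a false global claim, and the injectivity of $d\phi_E$ is asserted rather than proved.
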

		The matrix $\mathcal M$ is defined over $B:=\Spec \kk[\ab]$, but the formal-local situation, described above, in general does not extend to the global one. The family $\pi$ in general is not finite and flat.
		

	\begin{exmi}[{Example~\ref{exm-non-finitness}}]
		Let us take a monomial ideal $E=(y^3,x^2)$. The Hilbert-Burch matrix associated to $E$ is 
		$$ M_E=
		\begin{pmatrix}
			y^{3} & 0 \\
			-x 	  & 1 \\
			0 & -x 
		\end{pmatrix}. 
		$$
		One of the restrictions of the spread-out matrix of $E$ is
		$$ \mathcal M=
		\begin{pmatrix}
			y^{3} & y^2 \\
			-x+y^2	  & 1 \\
			0 & -x 
		\end{pmatrix}.
		$$
		One can check that the ideals generated by $2\times 2$ minors of $M_E$ and $\mathcal M$ have different colengths, thus the family cannot induce a morphism to the Hilbert scheme of points.
	\end{exmi} 
		We introduce a $\G_m^2$-action on $B$ making the family $\pi$ equivariant.
		Using equivariant techniques, we describe an explicit $\G_m^2$-stable subscheme $S$ of $B$ such that the family $\pi$ restricted to $S$ induces a morphism of the Hilbert scheme, see Corollary \ref{lemma-finitness}.
		\vskip 0.2 cm
		Let us choose a cocharacter $\psi\colon \G_m\to \G_m^2$. We consider the $\G_m$-action on the Hilbert scheme associated to $\psi$
		\begin{align*}
			\G_m \times \Hilb^d(\A^2) &\to \Hilb^d(\A^2),\\
			(t,[I]) &\mapsto (\psi(t)\cdot [I]).
		\end{align*}
		Consider the fixed point locus $\Hilb^{\G_m}(\A^2) $ together with its decomposition into connected components  $\Hilb^{\G_m}(\A^2) = \bigsqcup F_i$. The Białynicki Birula cell associated to $\psi$ and $F_i$ is a locally closed subscheme of $\Hilb^d(\A^2)$ given by
		 \[ \{[I]\in \Hilb^d(\A^2)\text{ , }\lim_{t\to 0} t\cdot [I] \in F_i\}. \]
		The cell centred at $[E]$ is the one that contains the chosen monomial ideal $E$. In that case, this point is the only $\G_m^2$-fixed point of the cell. \vskip 0.2 cm
		The following two results give us an explicit rational $\G_m$-equivariant \'etale map to the Białynicki-Birula cells of $\Hilb^d(\A^2)$. We consider two cases according to the degree $y$ with respect to the cocharacter $\psi$.
		First, we describe the Białynicki-Birula cell for the $y$-axis being contracted to $0$ by the $\G_m$-action associated to the chosen cocharacter.
	\begin{twri}[{Theorem~\ref{theorem-abb}}]
		Choose a cocharacter $\psi\colon \G_m \to \G_m^2$, such that $y$ has a non-negative degree with respect to the induced grading.
		Let us restrict $B$ to $B^{+,\psi}$ the Białynicki-Birula cell associated to $\psi$. The morphism $\pi$ induces a rational $\G_m^2$-equivariant map $\phi^+ \colon B^{+,\psi}~\to~\Hilb^{d}(\A^2)$. The image of $\phi^+$ lies in the Białynicki-Birula cell associated to $\psi$ and centred at $[E]$. The map $\phi^+$ is \'etale in a $\G_m^2$-stable neighbourhood of the origin.
	\end{twri}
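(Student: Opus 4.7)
The plan is to assemble the theorem from three ingredients: equivariance of $\pi$, the global finiteness result of Corollary~\ref{lemma-finitness}, and the infinitesimal isomorphism of Theorem~\ref{infinitesimal-phi}. First I would fix the $\G_m^2$-action on $B=\Spec\kk[\ab]$ that makes $\pi$ equivariant: the weights of the parameters $a_{i,j,k}$ are dictated by the requirement that the entries of $\mathcal M$ be homogeneous for appropriate gradings on the graded free modules between which $\mathcal M$ is a morphism. The hypothesis that $y$ has non-negative $\psi$-degree then guarantees that $B^{+,\psi}$, the attractor of the $\psi$-fixed locus at the origin, is a linear subspace of $B$ spanned by the coordinates of strictly positive $\psi$-weight.

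By Corollary~\ref{lemma-finitness}, on some $\G_m^2$-stable open $U\subset B$ containing the origin, $\pi$ is finite flat and induces a $\G_m^2$-equivariant morphism $\phi\colon U\to\Hilb^d(\A^2)$ with $\phi(0)=[E]$. Setting $\phi^+:=\phi|_{U\cap B^{+,\psi}}$ gives a rational map defined in a $\G_m^2$-stable neighbourhood of the origin in $B^{+,\psi}$. That its image lies in the BB cell centred at $[E]$ is immediate from equivariance: for $b\in U\cap B^{+,\psi}$ the orbit $\psi(t)\cdot b$ flows to $0\in U$ as $t\to 0$, so the morphism $t\mapsto \psi(t)\cdot \phi^+(b)=\phi^+(\psi(t)\cdot b)$ extends to $t=0$ with value $\phi^+(0)=[E]$.

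For étaleness, Theorem~\ref{infinitesimal-phi} supplies a $\G_m^2$-equivariant isomorphism $\hat d\phi_E$; respecting weight decompositions, it restricts to an isomorphism between the completed local ring of $B^{+,\psi}$ at $0$ and that of the BB cell of $\Hilb^d(\A^2)$ at $[E]$. Combined with the flatness on $U$, this gives étaleness of $\phi^+$ at the origin; openness of the étale locus together with $\G_m^2$-invariance then produces the desired $\G_m^2$-stable étale neighbourhood. The main obstacle is to justify this last identification: one must show that the positive-weight subalgebra of $\widehat{\O_{\Hilb^d,[E]}}$ genuinely corresponds to the completed local ring of the Białynicki-Birula cell centred at $[E]$, and not merely to some abstract formal neighbourhood of the correct dimension. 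This should rest on smoothness of $\Hilb^d(\A^2)$, which renders the BB cell smooth near $[E]$, together with the fact that the equivariant formal isomorphism $\hat d\phi_E$ transports the attractor of $B$ at $0$ onto the attractor of $\Hilb^d(\A^2)$ at $[E]$.
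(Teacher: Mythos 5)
There is a genuine gap at the very first step of your argument, and it is precisely the point the theorem exists to address. You assert that Corollary~\ref{lemma-finitness} gives a $\G_m^2$-stable \emph{open} $U\subset B$ containing the origin on which $\pi$ is finite and flat, and then you simply restrict to $B^{+,\psi}$. But Corollary~\ref{lemma-finitness} only produces a \emph{locally closed} subscheme $S$ of $B'$ (the locus where $\res_E$ has degree exactly $tm_t$ in $y$): requiring the coefficients of $y^N$ for $N>tm_t$ to vanish is a closed condition, not an open one. Example~\ref{exm-non-finitness} (the ideal $(y^3,x^2)$) shows explicitly that $\pi$ need not be finite over \emph{any} open neighbourhood of the origin in $B$ -- over the locus $ab\neq 0$ the fibres jump in length. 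So the open set $U$ you start from does not exist in general, and everything downstream collapses.

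The actual content of the proof is a weight argument showing that the offending higher-degree terms of the resultant vanish identically on $B^{+,\psi}$: since $\res_E$ is $\G_m^2$-homogeneous, a coefficient $W(\ab)$ of $y^N$ with $N>tm_t$ must have $\deg_\psi(W)=\deg_\psi(y)(tm_t-N)<0$ when $\deg_\psi(y)>0$, whereas on the cell every parameter has non-negative $\psi$-degree, forcing $W=0$; the borderline case $\deg_\psi(y)=0$ needs a separate analysis (only diagonal parameters can appear, and the resultant is then the $t$-th power of the last minor). Only after this does the degree condition of Corollary~\ref{lemma-finitness} become an open condition on $B^{+,\psi}$, yielding the rational map. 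Your treatment of equivariance (image in the cell) and of \'etaleness via Theorem~\ref{infinitesimal-phi} is essentially the paper's, and your worry about identifying the attractor of the formal neighbourhood with the cell is legitimate but resolvable as you suggest; the missing resultant-degree argument, however, is the heart of the theorem and cannot be bypassed.
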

		In case of the degree of $y$ being negative, we need to modify our construction, by intersecting the family with a thickened $x$-axis. We do that to force the support of the subscheme to be set-theoretically contained in the $x$-axis
			 $$\begin{tikzcd}
				\V(I_t(\mathcal M)+(y^{tm_t})) \arrow[r, "cl", hook] \arrow[rrd, "\pi_-"'] & \V(I_t(\mathcal M)) \arrow[r, "cl", hook] \arrow[rd, "\pi"] & \A^2 \times B \arrow[d] \\
				&                                                             & B.                      
			\end{tikzcd}$$ 
	For the detailed discussion of morphism $\pi_-$ we refer to the discussion of Figure \ref{thicken-x-axis} in Section \ref{chapter7}.
	\begin{twri}[{Theorem~\ref{theorem-abb-}}]
		Let us choose a cocharacter $\psi\colon \G_m \to \G_m^2$, such that $y$ has negative degree with respect to the induced grading.
		Let us restrict $B$ to $B^{+,\psi}$, the Białynicki-Birula cell associated to $\psi$. Then the map $\pi_{-}$ induces a rational $\G_m^2$-equivariant map $\phi^+ \colon B^{+,\psi} \to \Hilb^{d}(\A^2)$. The image of $\phi^+$ lies in the Białynicki-Birula cell associated to $\psi$ and centred at $[E]$. The map $\phi^+$ is \'etale in  a $\G_m^2$- stable neighbourhood of the origin.
	\end{twri}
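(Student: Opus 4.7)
\begin{hproof}
The plan is to parallel the argument for the positive case (Theorem~\ref{theorem-abb}), treating the truncation by the thickened $x$-axis $V(y^{tm_t})$ as the mechanism that repairs the failure of the naive family $\pi$ to be finite when $y$ has negative weight. When $y$ has negative $\psi$-weight, the diagonal entries $y^{d_i}$ of $\mathcal M$ have negative weight, so applying $\psi(t)$ as $t\to 0$ would push the support of a fibre of $\pi$ off the origin along the $y$-axis. Cutting by $(y^{tm_t})$ forces the support to lie set-theoretically on the $x$-axis, which is precisely what is needed for limits to exist in the Hilbert scheme and for the image to land in the cell centred at $[E]$.

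First, I would establish that $\pi_-$ is finite and flat of degree $d$ over some $\G_m^2$-stable open subscheme $U \subseteq B^{+,\psi}$ containing the origin. At $\ab = 0$ the fibre is $V(E)$, which has length $d$; since $y^{m_t}\in E$, the element $y^{tm_t}$ already lies in $I_t(\mathcal M)\mid_{\ab=0}$, so the truncation is cosmetic at the origin and the fibre length is preserved. By upper-semicontinuity of fibre length together with the flatness argument underlying Corollary~\ref{lemma-finitness}, these properties persist on an open neighbourhood of $0$; the $\G_m^2$-equivariance of $\mathcal M$ and of the ideal $(y^{tm_t})$ under the grading on $B$ lets us shrink this open to a $\G_m^2$-stable one. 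This yields the morphism $\phi^+$ on $U$ as the classifying map of the flat family.

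Second, I would check that $\phi^+(U)$ lies in the Białynicki-Birula cell of $\Hilb^d(\A^2)$ associated to $\psi$ and centred at $[E]$. By construction $\phi^+$ is $\G_m^2$-equivariant and $\phi^+(0) = [E]$, so for any $b \in U$,
\[
\lim_{t\to 0}\psi(t)\cdot \phi^+(b) \;=\; \phi^+\bigl(\lim_{t\to 0}\psi(t)\cdot b\bigr) \;=\; \phi^+(0) \;=\; [E],
\]
and the left-hand limit genuinely exists in $\Hilb^d(\A^2)$ precisely because of the truncation by $(y^{tm_t})$. For étaleness I would then invoke Theorem~\ref{infinitesimal-phi}: once one verifies that on the formal neighbourhood of $\ab = 0$ the ideals $I_t(\mathcal M)$ and $I_t(\mathcal M)+(y^{tm_t})$ define the same formal deformation of $E$, the induced map $\widehat{d\phi^+}_E$ coincides with the isomorphism $\widehat{d\phi}_E$ from Theorem~\ref{infinitesimal-phi}, and étaleness then spreads to a $\G_m^2$-stable open neighbourhood of the origin because étaleness is an open condition and $\phi^+$ is equivariant.

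The main obstacle is the first step: proving that $\pi_-$ really is finite and flat of degree $d$ in an open neighbourhood of the origin. Without the truncation, $\pi$ restricted to $B^{+,\psi}$ is not finite at all in the negative case, so one cannot simply quote Corollary~\ref{lemma-finitness}. The delicate point is that the exponent $tm_t$ must be large enough to capture all $d$ points of every nearby fibre (ensuring length at least $d$) yet tight enough to avoid introducing extraneous embedded components (ensuring length at most $d$). This requires controlling where the support of $V(I_t(\mathcal M))$ can escape under small perturbations of $\ab$ inside $B^{+,\psi}$, and is the substance of the detailed discussion of $\pi_-$ referenced in Section~\ref{chapter7}.
\end{hproof}
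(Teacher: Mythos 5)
Your high-level plan is the right one and your second step (equivariance forces the image into the cell centred at $[E]$, and \'etaleness at the origin comes from the formal statement plus openness of \'etaleness and equivariance) matches what the paper does. But the first step --- flatness of $\pi_-$ near the origin --- is exactly where your argument stops short, and you say so yourself: you flag it as ``the main obstacle'' and defer it to ``the detailed discussion of $\pi_-$''. That discussion does not exist elsewhere; it \emph{is} the content of this proof. Note also that finiteness of $\pi_-$ is not the issue (adding $y^{tm_t}$ makes $y$ nilpotent modulo the ideal, and $x$ is integral since the first minor is monic in $x$, so $\pi_-$ is finite everywhere); the issue is flatness, and upper semicontinuity of fibre length for the finite morphism $\pi_-$ only bounds the length \emph{above} by $d$ near the origin --- it cannot rule out the length dropping below $d$ when part of the fibre of $\pi$ escapes off the $x$-axis and is discarded by the truncation. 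Nor can you quote ``the flatness argument underlying Corollary~\ref{lemma-finitness}'': that argument runs through the Hilbert--Burch Theorem applied to $I_t(\mathcal M)$, and $I_t(\mathcal M)+(y^{tm_t})$ is not an ideal of maximal minors of a matrix of the required shape.

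The missing idea is the resultant factorization. On $B^{+,\psi}$ the resultant $\res_E$ is $\psi$-homogeneous of degree $\deg_\psi(y)\,tm_t$; a term $W'(\ab)y^N$ with $N<tm_t$ would force $\deg_\psi(W'(\ab))=\deg_\psi(y)(tm_t-N)<0$, impossible since every parameter on the cell has non-negative $\psi$-degree. Hence
$$\res_E \;=\; u\,y^{tm_t}+y^{tm_t+1}W(\ab,y)\;=\;y^{tm_t}\bigl(u+yW(\ab,y)\bigr),$$
and on the open locus where $u$ is invertible the two factors are coprime, so
$$\bigl(I+(y^{tm_t})\bigr)\cap\bigl(I+(u+yW(\ab,y))\bigr)=I,$$
i.e.\ $\V(I+(y^{tm_t}))$ is open \emph{and closed} in $\V(I)$. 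Since $u\neq 0$ makes the resultant a nonzero polynomial in $y$, Lemma~\ref{resultant-flat} gives flatness of $\pi$ there, and flatness restricts to the clopen piece $\V(I+(y^{tm_t}))$, which is exactly $\pi_-$; the degree is then $d$ because the fibre over the origin is $\V(E)$ (as $y^{m_t}\in E$, the truncation changes nothing at $\ab=0$). Your sketch correctly locates the difficulty but does not supply this mechanism, so as written it has a genuine gap at its central step.
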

	We apply the theorems above to generalize Theorem \ref{infinitesimal-phi} from only $\G_m^2$-fixed points, to any point of the Hilbert scheme, that is to any ideals.
		\begin{twri}[{Theorem~\ref{infinitesimal-phi2}}]
		Let us take an ideal $[I]\in \Hilb^d(\A^2)$. Every monomial ideal $E$ belonging to the closure of the $\G_m^2$-orbit of $[I]$ induces an isomorphism of spectra of complete local rings.
		$$ \widehat{d\phi_{I,E}} \colon \Spec(\kk[\ab]) = \Spec(\widehat{\O_{B_E,b}})\to \Spec(\widehat{\O_{\Hilb^d,I}}),$$
		where $b\in B_E$ is a point such that the ideal of maximal minors of the matrix $\mathcal M_E$ over $B_E$ is equal to $I$.
	\end{twri}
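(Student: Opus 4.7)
The plan is to reduce the statement to the Białynicki--Birula situation already handled by Theorems~\ref{theorem-abb} and~\ref{theorem-abb-}, by flowing $[I]$ onto the fixed point $[E]$. Since $E$ is a $\G_m^2$-fixed point contained in the $\G_m^2$-orbit closure of $[I]$, the standard description of torus orbit closures (alternatively Hilbert--Mumford) produces a cocharacter $\psi\colon \G_m\to \G_m^2$ with $\lim_{t\to 0}\psi(t)\cdot [I]=[E]$. In particular $[I]$ lies in the Białynicki--Birula cell of $\Hilb^d(\A^2)$ centred at $[E]$ and associated with $\psi$.

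Depending on the sign of $\deg_\psi y$, I would then invoke Theorem~\ref{theorem-abb} or Theorem~\ref{theorem-abb-}. Either supplies the rational $\G_m^2$-equivariant map $\phi^+\colon B_E^{+,\psi}\to \Hilb^d(\A^2)$, whose image is known to lie inside the Białynicki--Birula cell at $[E]$ and which is \'etale on some $\G_m^2$-stable open neighbourhood $U$ of the origin of $B_E^{+,\psi}$. Because \'etale maps are open and $\phi^+$ is equivariant, $V:=\phi^+(U)\subseteq \Hilb^d(\A^2)$ is open, $\G_m^2$-stable, and contains $[E]$.

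The main step, and where I expect the genuine difficulty to sit, is to show that $[I]\in V$, i.e.\ to produce the point $b$ asked for in the statement. For this I plan a flow/stability argument: since $\psi(t)\cdot [I]\to [E]\in V$ and $V$ is open, $\psi(t_0)\cdot [I]\in V$ for some $t_0$, and then the $\G_m^2$-stability of $V$ forces $[I]=\psi(t_0)^{-1}\cdot \psi(t_0)\cdot [I]\in V$. This is really the only new input needed; everything else rides on the theorems already proved, together with openness and equivariance of $\phi^+$ on $U$.

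Once $b\in U$ with $\phi^+(b)=[I]$ has been chosen, \'etaleness of $\phi^+$ at $b$ immediately yields the desired isomorphism
\[
\widehat{d\phi_{I,E}}\colon \Spec\widehat{\O_{B_E,b}}\xrightarrow{\sim}\Spec\widehat{\O_{\Hilb^d,[I]}},
\]
and the identification $\widehat{\O_{B_E,b}}\cong \kk[[\ab]]$ (matching the formal completion read in the statement) follows from smoothness of $B_E=\Spec\kk[\ab]$. A minor technical point to verify is that the \'etale locus of the rational map $\phi^+$ is large enough that we may indeed choose the preimage $b$ inside $U$; this is exactly what the flow argument provides, since by $\G_m^2$-stability of $U$ the point $b$ is automatically in $U$ once $\psi(t_0)\cdot b$ is.
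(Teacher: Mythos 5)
Your reduction to the Białynicki--Birula setting (choosing a cocharacter $\psi$ with $\lim_{t\to 0}\psi(t)\cdot[I]=[E]$, invoking Theorem~\ref{theorem-abb} or~\ref{theorem-abb-}, and using openness plus $\G_m^2$-stability of the étale locus to place $[I]$ in the image) matches the paper's strategy, and your flow argument for producing the point $b$ is sound. But there is a genuine gap in the final step, and it is exactly where the real content of the theorem lives. The map $\phi^+$ supplied by Theorems~\ref{theorem-abb} and~\ref{theorem-abb-} is defined only on the cell $B_E^{+,\psi}$, which is in general a \emph{proper} linear subspace of the $2d$-dimensional base $B_E$, and its image lies in the Białynicki--Birula cell of $\Hilb^d(\A^2)$, which is likewise of dimension $\dim B_E^{+,\psi}<2d$ in general. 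So ``étaleness of $\phi^+$ at $b$'' can only give you an isomorphism $\widehat{\O_{B_E^{+,\psi},b}}\cong\widehat{\O_{\mathrm{cell},[I]}}$ between completions of the cells. The theorem asserts an isomorphism $\Spec(\widehat{\O_{B_E,b}})\to\Spec(\widehat{\O_{\Hilb^d,I}})$ between the full $2d$-dimensional ambient completions; your closing remark that $\widehat{\O_{B_E,b}}\cong\kk[[\ab]]$ ``follows from smoothness'' silently replaces the cell by all of $B_E$, which is a dimension mismatch, not a formality.

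What is missing is the paper's device for accessing the normal directions to the cell: one takes a first-order thickening $\mathbf S$ of $S=B_E^{+,\psi}$ inside $B_E$ with $T_b\mathbf S=T_bB_E$ for all $b\in S$, checks via Nakayama and the Schaps part of the Hilbert--Burch Theorem~\ref{Hilbert-Burch} that $\pi$ (resp.\ $\pi_-$) is still finite and flat over $\mathbf S$, and thereby obtains a morphism of sheaves $i^*TB_E\to(\phi^+)^*T\Hilb^d(\A^2)$ on $S$. This map is an isomorphism at the origin by Theorem~\ref{iso-tangent}, hence on a $\G_m^2$-stable open subset of $S$ containing the preimage of $[I]$, giving the isomorphism on the \emph{full} tangent space $T_bB_E\to T_{[I]}\Hilb^d$. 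Only then does one bootstrap from first order to the completed local rings, repeating the Mittag--Leffler/Schaps argument of Theorem~\ref{infinitesimal-phi} over Artinian thickenings of $b$ in $B_E$. Without the thickening $\mathbf S$ (or some substitute argument handling the directions transverse to the cell), your proof establishes only the cell-level statement, which is strictly weaker than Theorem~\ref{infinitesimal-phi2}.
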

	\vskip 0.2 cm
	Our construction puts all Białynicki-Birula cells in a coherent setting. Using the dictionary between Gr\"obner picuture and $\G_m$-equivariant picture Table \ref{table}, Table \ref{table2} in Section \ref{preliminaries}, we see that the construction generalizes the one in \cite{Aldo-HB} and  solves \cite[Conjecture 4.2]{RoserWinz-2}.
	If we consider a dominating Białynicki-Birula cell centred ay $[E]$, then Theorem \ref{theorem-abb} provides us with an explicit description of an open subset of the Hilbert scheme. However, the condition that $[E]$ is the centre of a dominating Białynicki-Birula cell is not necessary, as shown in the following example. 
		\begin{exmi}[{Example~\ref{example-200}}]
			Let us consider the monomial ideal $E=(y^2,x^3)$. Let us plot the weights of a few of the parameters.
		$$	\includegraphics[scale=0.5]{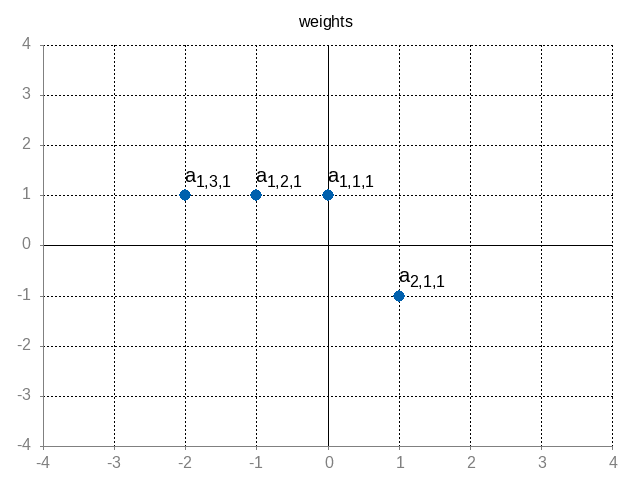} $$
			We see that there is no half-plane through the origin containing all the weights. This means that no open neighbourhood of the origin is fully contained in a Białynicki-Birula cell for some cocharacter $\psi$. Thus the potential existence of the rational map to the Hilbert scheme cannot be predicted by Theorem \ref{theorem-abb}. 		
		But one can check that the construction from Definition \ref{matrix-general} provides us with an open subscheme of the Hilbert scheme centred at $[E]$.
				\end{exmi}
		Motivated by the example above we pose the following question.
		\begin{questi}[{Question~\ref{quest}}]
				What are the monomial ideals $E$ such that the construction from Definition \ref{matrix-general} induces a rational map to Hilbert scheme, but there is no dominating Białynicki-Birula cell containing $[E]$. 
			Can one distinguish specific infinite classes of such ideals?
		\end{questi}
		The answer for that question would provide a collection of explicit open subschemes of the Hilbert scheme, coming from the original strategy by Ellingsrud and Str\o mme. Such open subschemes should be, in principle, distinct from those constructed in \cite[Proposition~2.1]{Haiman-catalan}.
		\vskip 0.2 cm
		The structure of the paper is as follows. After preliminaries, we begin in Section \ref{chapter2}, by introducing Hilbert-Burch matrices and the spread-out matrix $\mathcal M$.  This matrix defines a family of ideals that is the central object of our study. Then in Section \ref{chapter3}, we endow the construction with a $\G_m^2$-action. Next in  Section \ref{chapter4} we consider the local situation and prove Theorem \ref{infinitesimal-phi}. From that point, we consider the global question. In Section \ref{chapter5} we show that in general the family does not extend well. We develop a simple technique that allows us to control the well-definiteness of extended family. As a key technical idea, we consider a particular resultant of the outermost minors of the spread-out matrix. This resultant is a polynomial of one variable giving us an explicit criterion of well-definiteness of the morphism to the Hilbert scheme in terms of its coefficients, see Lemma \ref{resultant-flat}. Finally in Section \ref{chapter7} we apply our methods to describe the Białynicki-Birula cells. Afterwards in Section \ref{chapter tangent2} we apply the obtained results to extend Theorem \ref{infinitesimal-phi} from the case of monomial ideals to the case of arbitrary ones, see Theorem \ref{infinitesimal-phi2}.
		In the last part Section \ref{chapter8} we open a further discussion on the topic.
		\newpage
		\section{Preliminaries} \label{preliminaries}
		In this section, we state a few prerequisites. \vskip 0.2 cm
		Let us fix a field $\kk$.
		The fundamental role in the paper is played by the Hilbert-Burch theorem.
		\begin{notation}
			The $i$-th maximal minor of the $(t+1)\times t$ matrix $\mathcal M$ is the determinant of $M$ with the {$i$-th} row excluded.
		\end{notation}
	
		\begin{twr}[Hilbert-Burch Theorem]	\label{Hilbert-Burch}
			Let $R$ be a Noetherian local ring.
			\begin{itemize}
				\item \cite[Theorem 20.15b]{eisenbud} Suppose we are given a $(t+1)\times (t)$ matrix $\mathcal M$ with entries in $R$ such that the depth of the ideal $I$ generated by the $t$-minors is equal to $2$. The sequence 
				$$ 0 \to R^{\oplus t} \xrightarrow{\mathcal M} R^{ \oplus  t+1} \xrightarrow{f} R \to R/I  \to 0,$$
				where the $i$-th entry of $f$ is $(-1)^i$ times the $i$-th $t$-minor of $\mathcal M$, is a free resolution.
				\item  \cite[Theorem 8.3, Schaps]{hardef}
				Additionally, assume that $R$ is a $\kk$-algebra and let $A$ be an Artinian $\kk$-algebra. If $\mathcal M'$ is any lifting of $\mathcal M$ to $R'=R\otimes_\kk A$ and $f'$ is obtained as above, then the sequence 
				$$ 0 \to (R')^{\oplus t} \xrightarrow{\mathcal M'} (R')^{\oplus  t+1} \xrightarrow{f'} R' \to R'/I'  \to 0,$$
				where $I'$ is the ideal generated by the $t$-minors of $\mathcal M'$, is again a free resolution, and $R'/I'$ is flat over $A$.
				\item \cite[Theorem 20.15a]{eisenbud}
				Conversely to the first bullet, if a complex 
					$$ 0 \to R^{\oplus t} \xrightarrow{\mathcal M} R^{ \oplus  t+1} \xrightarrow{f} R \to R/I  \to 0,$$
					is exact, then there is a nonzero divisor $z\in R$ such that $I=zI_t(\mathcal M)$, where $I_t(\mathcal M)$ is the ideal generated by the maximal minors. Then the $i$-th entry of the matrix $f$  is {$(-1)^{i}z$~ times ~the ~$i$-th~ minor~ of~ $\mathcal M$}. The ideal $I_t(\mathcal M)$ has the depth equal to 2.
			\end{itemize}
		\end{twr}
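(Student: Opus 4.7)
The plan is to take the Buchsbaum--Eisenbud acyclicity criterion as the main engine and derive all three bullets from it together with standard depth bookkeeping. Recall the criterion: a complex $0\to F_n\xrightarrow{\phi_n}\cdots\xrightarrow{\phi_1}F_0$ of free modules over a Noetherian ring is exact iff the ranks add up correctly at every spot, and $\operatorname{depth}(I_{\operatorname{rank}(\phi_k)}(\phi_k))\geq k$ for every $k\geq 1$. With this in hand each of the three parts should reduce to a short verification.

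For the first bullet, I would first check that $f\circ\mathcal M=0$, which is a Laplace-expansion identity: evaluating $f$ on the $j$-th column of $\mathcal M$ produces the determinant of a $(t+1)\times(t+1)$ matrix whose extra column repeats the $j$-th column of $\mathcal M$, hence vanishes. Since the image of $f$ is $I_t(\mathcal M)=I$, identifying the cokernel with $R/I$ is automatic. It remains to verify the acyclicity hypotheses: as $I_1(f)=I_t(\mathcal M)$ has depth $\geq 2$ by assumption, both the depth condition at level $k=1$ (rank of $f$) and at level $k=2$ (rank of $\mathcal M$) are satisfied, and the rank computations reduce to noting that $I_t(\mathcal M)\neq 0$ forces $\operatorname{rank}(\mathcal M)=t$ and $\operatorname{rank}(f)=1$. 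Applying Buchsbaum--Eisenbud then delivers exactness.

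For the second bullet, the task is to promote the resolution through the flat base change to $R'=R\otimes_\kk A$. I would argue by induction on the length of the Artinian algebra $A$: for a small extension $0\to\kk\to A\to A_0\to 0$, tensoring the lifted complex with this sequence and using that modulo the augmentation ideal the complex reduces to the original exact complex gives $\operatorname{Tor}_1^A(R'/I',\kk)=0$. The local criterion of flatness then yields flatness of $R'/I'$ over $A$, and the same inductive argument upgrades this to exactness of the lifted complex; alternatively one can cite Schaps's lifting lemma directly.

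For the third bullet I would use the Buchsbaum--Eisenbud structure theorem for free resolutions: in any exact complex $0\to F_1\xrightarrow{\mathcal M}F_0\xrightarrow{f}R$, the map $f$ is determined by $\mathcal M$ up to multiplication by a scalar that must be a nonzero divisor. Comparing $f$ with the alternating sum of maximal minors by passing to the top exterior power $\bigwedge^t F_1$ produces $z\in R$ with $f_i=(-1)^i z\cdot m_i$, giving $I=z\cdot I_t(\mathcal M)$; injectivity of $\mathcal M$ forces $z$ to be a nonzero divisor, and the depth equality $\operatorname{depth}(I_t(\mathcal M))=2$ then falls out of running Buchsbaum--Eisenbud in reverse. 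The hard part is precisely this last bullet: comparing $f$ with the minors on the level of exterior powers is the heart of the converse, and any ambiguity in the chosen trivialisation of $\bigwedge^t F_1$ has to be carefully absorbed into $z$, which is where almost all the content of the statement sits.
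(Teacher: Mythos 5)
The paper does not prove this statement: all three bullets are quoted directly from the literature (Eisenbud, Theorem~20.15, and Schaps), so there is no in-paper argument to compare yours against. Your sketch reproduces the standard proofs from those sources --- the Buchsbaum--Eisenbud acyclicity criterion plus the Laplace identity $f\circ\mathcal M=0$ for the first bullet, the exterior-power comparison of $f$ with the vector of signed minors (with the nonzerodivisor $z$ extracted from $\operatorname{depth}I_1(f)\geq 1$) for the third, and the small-extension induction with the local criterion of flatness for Schaps's lifting, where one should first establish exactness of the lifted complex in positive degrees and only then read off $\operatorname{Tor}_1^A(R'/I',\kk)=H_1(C\otimes_A\kk)=0$ --- and it is essentially correct as an outline.
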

		In Section \ref{chapter6} we will need the following classical results on resultant.
		{\begin{lem} [{\cite[ Chapter 3, §6 Proposition 1, Proposition 3]{Cox-book}}]
			Let $R$ be a $\kk$-algebra. Let $f,g \in R[x]$. Then
			\begin{enumerate}
				\item  The resultant $\res_x(f,g)$ of $f,g$ with respect to $x$ belongs to the ideal $(f,g) \cap R$.
				\item $\res_x(f,g)=0$ if and only if $f$ and $g$ admits a common factor which has a positive degree with respect to $x$.
			\end{enumerate}
		\end{lem}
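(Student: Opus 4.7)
The plan is to work with the Sylvester matrix $S=S(f,g)\in R^{(n+m)\times(n+m)}$, where $n=\deg_x f$ and $m=\deg_x g$. By definition $\res_x(f,g)=\det S$, and $S$ represents, in the monomial bases, the $R$-linear map
$$\mu\colon R[x]_{<m}\oplus R[x]_{<n}\longrightarrow R[x]_{<n+m},\qquad \mu(u,v)=uf+vg.$$

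For part (1) I would apply the adjugate identity $S\cdot\mathrm{adj}(S)=\det(S)\cdot\mathrm{Id}$ to the basis vector corresponding to the constant polynomial $1\in R[x]_{<n+m}$. This immediately exhibits $u\in R[x]_{<m}$ and $v\in R[x]_{<n}$ with $uf+vg=\res_x(f,g)$, so $\res_x(f,g)\in (f,g)$. That $\res_x(f,g)\in R$ is clear from its definition as a polynomial in the coefficients of $f$ and $g$.

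For the ``$\Leftarrow$'' direction of (2), if $f=hp$ and $g=hq$ with $\deg_x h>0$, then $qf-pg=0$ where $(q,-p)\neq 0$ lies in $R[x]_{<m}\oplus R[x]_{<n}$, so the columns of $S$ admit a nontrivial $R$-linear relation. For the ``$\Rightarrow$'' direction I would reduce to the universal situation: regard $f$ and $g$ as polynomials with indeterminate coefficients over $\Z$, so that the ambient ring becomes a polynomial ring, hence a UFD. There the classical argument applies: the vanishing of $\det S$ produces a nonzero syzygy $uf+vg=0$ with $\deg_x u<m$, $\deg_x v<n$, and unique factorisation combined with Gauss's lemma forces $f$ and $g$ to share a factor of positive $x$-degree. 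Specialising the universal identity back to $R$ then recovers the statement.

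The main obstacle is the converse direction in (2) over a general commutative ring: a nontrivial kernel of $S$ does not in itself force $\det S=0$ (by McCoy it only forces $\det S$ to be a zero-divisor), so the naive implication ``linear dependence $\Rightarrow$ vanishing determinant'' fails outside the domain case. The reduction to the universal polynomial ring sidesteps this, but in the applications to come $R$ will be a polynomial $\kk$-algebra (a UFD), and the classical proof in Cox--Little--O'Shea applies verbatim.
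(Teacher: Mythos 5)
The paper offers no proof of this lemma: it is quoted directly from Cox--Little--O'Shea, so there is no in-paper argument to compare against, and your proposal has to be judged on its own. Part (1) is correct and is the standard argument: applying $S\cdot\mathrm{adj}(S)=\det(S)\cdot\mathrm{Id}$ to the basis vector of the constant polynomial $1$ produces $u,v$ with $uf+vg=\res_x(f,g)$, and this works over any commutative ring.

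The gap is in the forward direction of (2). The proposed ``reduction to the universal situation'' does not do what you want: over $\Z[\text{generic coefficients}]$ the resultant is a \emph{nonzero} polynomial, so its vanishing is not a universal identity that can be specialised back to $R$ --- it is a condition on the particular specialisation, and knowing that the coefficients of $f,g$ land in the vanishing locus of the generic resultant says nothing, by itself, about a common factor in $R[x]$. The classical argument (nontrivial kernel of $S$ gives a syzygy $uf+vg=0$, then unique factorisation plus Gauss's lemma produces a common factor) must be run over $R$ itself or its fraction field, which requires $R$ to be at least a UFD. Your backward direction has a parallel defect in the stated generality: a nontrivial element of $\ker S$ only forces $\det S$ to annihilate that element (McCoy), not to vanish, unless $R$ is a domain. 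You do flag both problems in your closing paragraph, and you are right that every invocation of (2) in the paper is over a UFD or a field ($R=\kk[\ab,y]$ in Definition~\ref{resultant}, and the residue fields $\kappa(\mathfrak p)$ in the proof of Lemma~\ref{resultant-flat}), where the Cox--Little--O'Shea proof applies verbatim. But as a proof of the lemma \emph{as stated}, for an arbitrary $\kk$-algebra $R$, the argument does not close; the honest conclusion is that part (2) should carry the implicit hypothesis that $R$ is a UFD (or that one only ever uses it after base change to a field, as the paper in fact does via Lemma~\ref{res-stable}).
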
}
		In general the resultant is not stable under base change, see \cite[Chapter 3, §6]{Cox-book}, but in our case, the left argument will be a monic polynomial with respect to $x$. \vskip 0.2 cm
		\begin{lem} [{\cite[Chapter 3, §6 Proposition 3]{Cox-book}}] \label{res-stable}
			Let $f,g \in R[x]$ and $f$ be a monic polynomial of the variable~$x$. 
			The resultant is stable under base change of $R$,
			for every $\kk$-algebra homomorphism $R \to S$, image of resultant~$\res_x(f,g)$ in $S[x]$ is equal to the resultant of images of $f,g$ in $S[x]$.
		\end{lem}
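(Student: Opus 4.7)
The plan is to work directly with the Sylvester matrix definition of the resultant and to exploit the fact that a monic polynomial retains its formal degree under any base change. Recall that $\res_x(f,g)$ is by definition the determinant of the Sylvester matrix $\mathrm{Syl}(f,g;m,n)$, an $(m+n)\times(m+n)$ matrix whose entries are the coefficients of $f$ (considered as a polynomial of formal degree $m$) and the coefficients of $g$ (considered as a polynomial of formal degree $n$), padded with zeros in the standard shifted pattern.

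First I would fix $m=\deg f$ and $n\geq \deg g$ as the formal degrees. Since $f$ is monic, its leading coefficient is $1\in R$, and any $\kk$-algebra homomorphism $\phi\colon R\to S$ sends $1$ to $1$. Therefore the image $\phi(f)\in S[x]$ is again monic of degree exactly $m$, so the formal degree $m$ is preserved without ambiguity. For $g$ we keep the same formal degree $n$ even if the image $\phi(g)$ happens to have strictly smaller actual degree in $S[x]$; in that case we merely fill in zeros at the appropriate positions of the Sylvester matrix, which is still consistent with taking $\phi$-images of the coefficients of $g$.

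Next I would observe that with these conventions every entry of $\mathrm{Syl}(\phi(f),\phi(g);m,n)$ is the image under $\phi$ of the corresponding entry of $\mathrm{Syl}(f,g;m,n)$. Since the determinant is given by the universal Leibniz polynomial in the matrix entries, it commutes with any ring homomorphism, yielding
$$
\phi\bigl(\res_x(f,g)\bigr) \;=\; \phi\bigl(\det \mathrm{Syl}(f,g;m,n)\bigr) \;=\; \det \mathrm{Syl}(\phi(f),\phi(g);m,n) \;=\; \res_x(\phi(f),\phi(g)).
$$

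The only subtlety, and the reason why monicness of $f$ is essential, is the convention for formal degree. If $f$ were allowed to be non-monic, the actual degree of $\phi(f)$ could drop under base change, and one would then have to compare resultants computed with respect to different formal degrees, which typically differ by a power of the degenerating leading coefficient. Since $f$ is monic, no such correction appears and the statement follows directly. I expect no substantive obstacle here: the entire proof amounts to careful bookkeeping of which data enters the Sylvester matrix, the single nontrivial input being that $1\in R$ maps to $1\in S$ under a unital homomorphism.
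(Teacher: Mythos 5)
The paper offers no proof of this lemma at all --- it is quoted verbatim from Cox--Little--O'Shea (Chapter 3, \S 6, Proposition 3) and used as a black box --- so there is nothing internal to compare against; your Sylvester-matrix argument is the standard proof of that cited result and is essentially correct. One point is worth tightening. You locate the role of monicness only in the fact that $\deg\phi(f)=\deg f=m$ cannot drop, and for $g$ you simply decree that the formal degree $n$ is retained, ``filling in zeros.'' But if the lemma's conclusion is read as ``the resultant of the images computed at their actual degrees,'' then when $\deg\phi(g)<n$ you still owe the comparison
$$\det \mathrm{Syl}\bigl(\phi(f),\phi(g);m,n\bigr)=\mathrm{lc}\bigl(\phi(f)\bigr)^{\,n-\deg\phi(g)}\cdot \res_x\bigl(\phi(f),\phi(g)\bigr),$$
which is exactly where monicness of $f$ is used a second time: the correction factor is $1^{\,n-\deg\phi(g)}=1$. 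With that one sentence added, the argument is complete; the rest (Sylvester entries map entrywise under $\phi$, and the determinant, being the universal Leibniz polynomial in the entries, commutes with any ring homomorphism) is exactly right and is all the paper needs for its applications, where $f$ is the first maximal minor $x^t+(\text{lower order in }x)$ and hence monic.
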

	 	\vskip 0.2 cm
	 	Since a considerable part of the motivation of this work comes from commutative algebra and the study of Hilbert scheme from the Gr\"obner basis angle, for example \cite{Aldo-HB}, we discuss the relation between monomial orders and $\Z$-gradings on the polynomial ring $\kk[x,y]$. For reference on monomial orders see \cite[Section 1.2]{Greuel-book}.
			\begin{table}[H] 
			\caption{Dictionary between graded structures and monomial orders} \label{table} 
	 		\begin{tabular}[!htp]{cc}
	 			\addlinespace
	 			$\Z$-grading & Monomial ordering \\
	 			\toprule
	 			\addlinespace
	 			 a $\Z$-grading structure $\deg$ on $k[x,y]$ &  a monomial order $\succeq$ on $\kk[x,y]$\\
	 			\addlinespace
	 				 			\midrule
	 			\addlinespace
	 			$D=\{(\alpha-\alpha',\beta-\beta')\text{ , } \deg(x^\alpha y^\beta)>\deg(x^{\alpha'} y^{\beta'})  \}$ & $D_{\succeq}=\{(\alpha-\alpha', \beta-\beta')\text{ , } x^\alpha y^\beta \succeq	 x^{\alpha'} y^{\beta'}  \}$ \\
	 			\addlinespace
	 			\midrule
	 			\addlinespace
				$\psi(\alpha,\beta)=\alpha\deg(x)+\beta\deg(y)$ & a functional $\psi\colon \Z^2 \to \Z$ \\	
				& such that $\psi(D_{\succeq}) \subset \Z_{\geq 0}$
				\\
				\addlinespace
	 		\end{tabular}
		\end{table}
		\vskip 0.2 cm
		In case of the Hilbert scheme of $d$-points we consider only finitely many monomials. Thus for a chosen monomial order $\succeq$, we can define $\psi$ that maps the set $D_{\succeq}$ to strictly positive numbers, for reference see \cite[Lemma 1.2.11]{Greuel-book}. 
		A graded structure on $\kk[x,y]$ translates to a $\G_m$-action on Hilbert scheme, and thus we obtain the following dictionary, Table \ref{table2}.\\
		Let $I\subset \kk[x,y]$ be an ideal of colength $d$, and let $J$ be a fixed monomial ideal of colength $d$.
		{	\begin{table}[h] 
			\caption{Dictionary between the limits of $\G_m$-action and the leading term ideals} \label{table2} 
			\begin{tabular}[!htp]{cc}
				$\G_m$-action & Monomial ordering \\
				\toprule
				\addlinespace
			 	$\lim\limits_{t\to 0}t \cdot [I] \in \Hilb^{d}(\A^2)$ 
			 	&
			 	$\Lt(I)=(\Lt(f)\text{, }f\in I)$\\
			 	\addlinespace
			 	\midrule
			 	\addlinespace
			 	The Białynicki-Birula cell &  The Gr\"obner cell  \\
			 	centred at $[J]$ &  $\{I\subset \kk[x,y]\text{, }\Lt(I)=J\}$.\\
				\addlinespace

			\end{tabular}
		\end{table}}
		\\
		{Where the limit in $0$ is given by taking the $\G_m$-orbit of $[I]$, extending it to $\A^1$, and then taking the fibre over $0\in \A^1$ of the induced family of ideals.}
		\begin{exm} \cite[Example 1.2.8]{Greuel-book} \label{exm-glex}
			Let us consider the negative degree lexicographical ordering on $\kk[x,y]$:
			\begin{equation} \label{glex}
				x^\a y^\beta \succeq x^{\a'}y^{\beta'} \iff \a+\b < \a'+\b' \text{ or } \a+\b = \a'+\b' \text{ and } \a>\a'.
			\end{equation} 
			We follow the dictionary from Table \ref{table}, and look for an equivalent graded structure. From the second part of Condition \eqref{glex} we get
			$$ \deg(x)<\deg(y)<\deg(1)=0.$$
			The first part of Condition $\eqref{glex}$ says that $\deg(x)$ and $\deg(y)$ should be almost equal.
			From that we see that, if we restrict ourselves to ideals of colength $d$, the graded structure
			$$ \deg(y)=-N, $$
			$$ \deg(x)= -N(1-\varepsilon),$$
			for small enough $\epsilon \in \Q_{>0}$, and $N \in \Z_{>0}$ divisible enough, such that $\epsilon N \in \Z$, induces the same cell structure according to the dictionary from Table \ref{table2} as the chosen order.
		\end{exm}
		\vskip 0.2 cm
		We end this section with an important classical lemma concerning maps of the Białynicki-Birula cells. For reference for the Białynicki-Birula cells, see \cite[Theorem 1.5]{ABB_JJLS}.
			\begin{lem} \label{ABB-simples}
			Let us consider two affine spaces $X,Y$ of the same dimension with a linear action of $\G_m$, such that both of them are the Białynicki-Birula cells of themselves, with the origin being the only $\G_m$-fixed point. Any equivariant morphism $\phi\colon X \to Y$, which is an isomorphism on tangent spaces to the origin, is an isomorphism of schemes.
		\end{lem}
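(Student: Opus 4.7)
The plan is to exploit the graded structure that the $\G_m$-action imposes on the coordinate rings, and then reduce the statement to an isomorphism of formal neighbourhoods of the origin.

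First I would observe that since $X$ and $Y$ are affine spaces with a linear $\G_m$-action contracting everything to the origin as the unique fixed point, the weight decomposition of these linear representations has all weights of the same sign. Replacing $\G_m$ by its inverse if necessary, I may assume all weights are strictly positive, so the coordinate rings $R := \kk[X]$ and $S := \kk[Y]$ become positively $\Z$-graded polynomial rings with $R_0 = S_0 = \kk$ and finite-dimensional graded pieces. The equivariant morphism $\phi$ translates to a graded $\kk$-algebra map $\phi^* : S \to R$.

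Next I would pass to completions at the origin. The key observation is that positivity of the grading together with finite-dimensionality of the graded pieces yields $\widehat{R} = \prod_{d \geq 0} R_d$ and $\widehat{S} = \prod_{d \geq 0} S_d$, because the $\mathfrak{m}_0$-adic filtration is cofinal with the degree filtration $\bigoplus_{d \geq k} R_d$. Under this identification the extended map $\widehat{\phi^*}$ acts componentwise through the restrictions $\phi^*_d : S_d \to R_d$.

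The tangent-space hypothesis dualises to the statement that $\widehat{\phi^*}$ induces an isomorphism on cotangent spaces. I would then invoke the formal inverse function theorem---equivalently, complete Nakayama giving surjectivity, followed by the observation that a surjection between regular complete local rings of the same Krull dimension $n = \dim X = \dim Y$ has zero kernel---to conclude that $\widehat{\phi^*}$ is an isomorphism. Componentwise this forces each $\phi^*_d$ to be an isomorphism, hence $\phi^*$ itself is an isomorphism of graded rings, and $\phi$ is an isomorphism of schemes.

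The main subtle point, though routine once spotted, is the identification $\widehat{R} = \prod_d R_d$ and the componentwise description of $\widehat{\phi^*}$: these depend crucially on the weights being of one strict sign, so that the $\mathfrak{m}_0$-adic and degree filtrations are cofinal. Without this assumption one could at best recover an isomorphism of formal neighbourhoods rather than of the whole schemes, and the lemma would fail.
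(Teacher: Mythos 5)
Your argument is correct, but it takes a genuinely different route from the paper. You work entirely on the level of graded coordinate rings: the contracting action with unique fixed point forces all weights to have one strict sign, so $\kk[X]$ and $\kk[Y]$ are positively graded with finite-dimensional pieces, the $\mathfrak m$-adic and degree filtrations are cofinal, the completion is $\prod_d R_d$, and the formal inverse function theorem plus the componentwise description of $\widehat{\phi^*}$ forces each graded piece $\phi^*_d\colon S_d\to R_d$, hence $\phi^*$ itself, to be an isomorphism. The paper instead uses Drinfeld's functorial formalism: since $X$ represents its own Białynicki-Birula functor $X^+$ and $T_0X$ represents the thickened-point version $X^+_\epsilon$, the restriction along $\Spec \kk[\epsilon]/(\epsilon^2)\hookrightarrow \A^1$ gives a canonical equivariant isomorphism $X\cong T_0X$ (and likewise for $Y$), and naturality reduces the claim to the linear isomorphism $d\phi$. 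Your approach is more elementary and self-contained, and it isolates exactly where the hypothesis enters (strict positivity of the weights makes the two filtrations cofinal, without which one only gets a formal-local statement); the paper's approach buys a structural linearization $X\cong T_0X$ and is phrased so that it would survive in settings where $X$ is only known to represent the BB functor rather than being given as an affine space. One small point you leave implicit: you need $\phi(0)=0$ for $\phi^*$ to be local at the origins, but this is immediate since $\phi$ is equivariant and the origin of $Y$ is its unique fixed point.
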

		\begin{proof}
			Since $X$ has a contracting $\G_m$-action we get a natural equivariant isomorphism 
			\begin{align*}
				X &\cong T_0 X, \\
				x &\mapsto \dfrac{\mathrm{d}gx}{\mathrm{d}g}(0).
			\end{align*}
			To be precise, since $X$ is the Białynicki-Birula cell for $\G_m$, the functor $$X^{+}(S) = \{\A^1\times S \to X \text{ , } \G_m \text{-equivariant}\}$$
			is represented by $X$.
			There is a natural transformation of the functors from $X^{+}$ to
			$$X_{\epsilon}^+(S) =	\{\Spec \kk[\epsilon]/(\epsilon^2) \times S \to X \text{ , } \G_m \text{-equivariant}\}$$
			given by the closed embedding of the thicken point $\Spec \kk[\epsilon]/(\epsilon^2)$ into the line $\A^1$ at the origin 
			$$	\begin{tikzcd}
				\A^1\times S \arrow[r]                                                                    & X & {} \\
				{\Spec \kk[\epsilon]/(\epsilon^2) \times S. } \arrow[u, hook] \arrow[ru] &   &   
			\end{tikzcd} $$
			Since the fixed point locus of $X$ is just the origin, the functor $X^+_{\epsilon}$ is represented by $T_0X$. The morphism ${X \to T_0 X}$ coming from the natural transformation $X^+\to X_\varepsilon^+$ is an isomorphism because given a $S$-point of $X_\epsilon^+$ there is only one way to complete the following diagram equivariantely
			$$	\begin{tikzcd}
				\A^1\times S   \arrow[r,dotted]                                                                 & X & {} \\
				{\Spec \kk[\epsilon]/(\epsilon^2) \times S. } \arrow[u, hook] \arrow[ru] &   &   
			\end{tikzcd} $$
			The same holds for $Y$, thus we obtain the following diagram
			$$	\begin{tikzcd}
				T_0X \arrow[r, "d\phi"]                & T_0Y                 \\
				X \arrow[r, "\phi"] \arrow[u, "\cong"] & Y \arrow[u, "\cong"].
			\end{tikzcd}$$
			By the naturality, the diagram is commutative and since $d\phi$ is an isomorphism, the morphism $\phi$ is also an isomorphism.
		\end{proof}
	
	\section{Hilbert-Burch matrices} \label{chapter2}
		We are going to study the Hilbert scheme of $d$ points on the affine plane, for fixed positive integer $d$.
		Let us fix $E$ to be a monomial ideal in $\kk[x,y]$, such that $\dim_{\kk}(\kk[x,y]/E)=d$. Such an ideal can be presented as $(x^{t},x^{t-1} y^{m_1},\ldots, xy^{m_{t-1}}, y^{m_t})$ for $m_i$ being a non-decreasing sequence of natural numbers. The set of generators above is minimal if and only if the sequence $m_i$ is strictly increasing. 
		Let us define $d_i=m_{i}-m_{i-1}$, with the convention that $m_0=0$.
		We follow {\cite[page 5]{Aldo-HB}} and define the Hilbert-Burch matrix for $E$
		$$
		M_E=	\begin{pmatrix}
			y^{d_1} & 0 & 0 & \ldots & \ldots & 0 \\
			-x & y^{d_2} & 0 & \ldots & \ldots & 0 \\
			0 & -x & y^{d_3} &  \ldots &\ldots & 0 \\
			\vdots & \vdots & \vdots  & \ddots & \vdots & \vdots \\
				\vdots & \vdots & \vdots  & \vdots & \ddots & \vdots \\
			0 & \ldots & \ldots  & \ldots  & -x & y^{d_t} \\
			0 & \ldots & \ldots & \ldots & 0 & -x \\
		\end{pmatrix}.
		$$
		This is a matrix of size $(t+1)\times t$. Let us observe that the set of $t$-minors of $M_E$ is up to the sign $(x^{t},x^{t-1} y^{m_1},\ldots, xy^{m_{t-1}}, y^{m_t})$.
		Let us take $R=\kk[x,y]$. 
		The Hilbert-Burch matrix represents a free resolution of the ideal as a $\kk[x,y]$-module:
		\begin{equation} \label{res-HB}
			0 \to R^{\oplus t} \xrightarrow{M_E} R^{\oplus t+1} \to E \to 0\end{equation}
		where the homomorphism $(R)^{\oplus t+1} \to E $ sends $(-1)^{i}$ times the $i$-th generator to $i$-th maximal minor of $M_E$, as in the Hilbert-Burch Theorem \ref{Hilbert-Burch}.

		\vskip 0.2 cm
		Now we are going to spread out the matrix $M_E$, by which we mean make $M_E$ a central fibre of a certain family of matrices as defined in Definition \ref{matrix-general}, in order to get a family of the Hilbert-Burch matrices. The matrix $M_E$ has two diagonals, we consider a family of matrices with entries corner-wise dominated by those diagonals. That means if we choose an element on the diagonal $(i,i)$, every entry below in the same column and to the right in the same row has a lower degree with respect to $y$. Additionally there are no entries dependent on $x$, other than $-x$ on the diagonal $(i+1,i)$ as in the matrix $M_E$. \vskip 0.2 cm
		For example, all entries beside $y^{d_2}$ in the highlighted area in the matrix below have the degree with respect to $y$ strictly lower than $d_2$
			$$	\begin{pNiceMatrix}
			
			y^{d_1} & \ast  & \cdots   & \ast  \\
			
			\ast & 	\Block[fill=red!60]{1-3}{} y^{d_2} \Block[fill=red!60]{4-1}{}  & \cdots  &  \ast  \\
			\vdots &  \vdots  & \ddots   & \vdots  \\
			\ast & \ast  & \cdots  & y^{d^t}  \\
			\ast & \ast  & \cdots     & \ast \\
			\end{pNiceMatrix}.$$
		Besides the $-x$ on the diagonal $(i+1,i)$, coming from $M_E$, there is no other entry dependent on $x$.
		\begin{defi} \label{matrix-general}

			We define the base scheme $B_E=\Spec(k[\ab])$ to be an affine space over which we have a matrix
			$$ \mathcal M_E :=
			\begin{pmatrix}
				y^{d_1}+a_{1,1} & a_{1,2} & a_{1,3} & \ldots & a_{1,t} \\
				-x+a_{2,1} & y^{d_2}+a_{2,2} & a_{2,3} & \ldots & a_{2,t} \\
				a_{3,1} & -x+a_{3,2} & y^{d_3}+a_{3,3} & \ldots & a_{3,t} \\
		
				\vdots & \vdots & \vdots  & \ddots & \vdots \\
				a_{t,1} & \ldots & \ldots  & -x+a_{t,t-1} & y^{d_t}+a_{t,t} \\
				a_{t+1,1} & \ldots & \ldots  & a_{t+1,t-1} & -x+a_{t+1,t} \\
			\end{pmatrix},
			$$	
			where $a_{i,j} \in \kk[y,\ab]$ and $$\begin{cases}
				\deg_y(a_{i,j})< 	d_j, &  \text{for } i>j\\
				\deg_y(a_{i,j})< 	d_i, & \text{for } i\leq j,
			\end{cases}$$ where $a_{i,j}=\sum_{k} a_{i,j,k}y^k$, we introduce a notation $\ab= (a_{i,j,k})$.
		\vskip 0.2 cm
		The matrix $\mathcal M_E$ is a family of matrices over $B_E$, it is the matrix $M_E$ spread out from the origin over the base ~$B_E$. From now on we will omit the subscript $E$ and denote the base $B$ and the spread-out matrix $\mathcal M$.
			We also define a map $\pi$:
		$$	\begin{tikzcd}
			\V(I_t(\mathcal M)) \arrow[r, hook, "cl"] \arrow[rd, "\pi"] & \A^2 \times B  \arrow[d] \\
			& B,                       
		\end{tikzcd}$$
		where $I_t(\mathcal M)$ is the ideal generated by the maximal minors of the matrix $\mathcal M$.
			\end{defi}
		\begin{exm} \label{first-exm}
		Let $E$ be the ideal generated by $(y^3,xy,x^2y,x^3)$, then the Hilbert-Burch matrix for $E$ is:
			$$ M_E= \begin{pmatrix}
			y& 0 &0     \\
			-x & 1     &  0           \\
			0   & -x    &  y^2 \\
			0  &  0     & -x  \\	
		\end{pmatrix}
		$$
		and our spread-out matrix is
		$$ \mathcal M= \begin{pmatrix}
		 y+a_{1,1,0} & a_{1,2,0} &a_{1,3,0}     \\
		 -x+a_{2,1,0} & 1     &  0           \\
		 a_{3,1,0}   & -x    &  y^2+a_{3,3,1}y+a_{3,3,0} \\
		 a_{4,1,0}  &  0     &  a_{4,3,1}y+a_{4,3,0}-x  \\	
					\end{pmatrix}
	$$		
		note that $a_{*,1}= a_{*,1,0}$ and the same goes for the first column. The base scheme is the spectrum of $${\kk[\ab]=\kk[a_{1,1,0},\ldots a_{1,3,0},a_{2,1,0},a_{3,1,0},a_{4,1,0},a_{3,3,1},a_{3,3,0},a_{4,3,1},a_{4,3,0}]}.$$
		The length of the quotient algebra $R/E$ is $5$. Thus $E$ naturally lies in $Hilb^5(\A^2)$  which is of dimension $10$, which is the dimension of the base $B=\Spec \kk[\mathbf a]$.
			\end{exm}
		 Based on the example above, one could expect that $B$ actually parametrizes certain resolutions coming from the Hilbert-Burch Theorem and thus controls the deformations of $\Spec(R/E)$ as a finite subscheme of $\A^2$. This speculation is investigated in Section \ref{chapter4} up to Section \ref{chapter8}.
		Infinitesimally the speculation holds, see Theorem \ref{infinitesimal-phi}. Usually the global case does not, see Examples  \ref{exm-nonflatness} and \ref{exm-non-finitness}. Before investigating those issues further, in the next section we endow the family with a $\G_m^2$-action.
	
	\section{Torus action}\label{chapter3}
		In the previous section, for a given monomial ideal $E$ we have constructed a family of matrices $\mathcal M$ over the base scheme $B$ and considered a map $\pi$ from the incidence scheme $\V(I_t(\mathcal M))$ to $B$, mapping the vanishing locus of $t$-minors of $M_b$ to the point $b\in B$, see Definition \ref{matrix-general}.  
		In this section, we endow the scheme $B$ with a $\G_m^2$-action, making our construction equivariant. The equivariance of the family significantly simplifies considerations in later sections. We start by recalling the equivariant resolution of $E$.
		\vskip 0.2 cm
		\begin{lem} [{\cite[Lemma 3.1]{ES1}}]\label{simple-resolution}
			There is a $\G_m^2$-equivariant structure on Resolution \eqref{res-HB} with a bi-graded structure.
			\begin{equation} \label{gm-res-simple} 0 \to \bigoplus^{t}_{j=1} R[-t+j-1,-m_j] \xrightarrow{M_E} \bigoplus^{t+1}_{i=1}R[-t+i-1,-m_{i-1}] \to E \to 0,\end{equation}
			where $R=\kk[x,y]$ is endowed with a bi-degree $\deg(x^ay^b)=(a,b)$.
		\end{lem}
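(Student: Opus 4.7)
The plan is to observe that every ingredient in the Hilbert-Burch resolution \eqref{res-HB} is monomial (hence automatically bi-homogeneous), so the only real task is to assign the correct bi-degrees to the generators of the free modules so that $M_E$ and the augmentation are bi-homogeneous of bi-degree $(0,0)$. The statement will then follow from the fact that a map of free modules which is a resolution in the ungraded category and respects a grading is automatically a resolution in the graded category.

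First I would fix the bi-grading on $E$: since $E$ is monomial, it is a $\G_m^2$-sub-representation of $R$, so it inherits a canonical bi-grading from $R$. Next I would look at the augmentation $\bigoplus_{i=1}^{t+1} R \to E$, which by Hilbert-Burch sends the $i$-th standard basis vector $e_i$ to $(-1)^i$ times the $i$-th maximal minor of $M_E$. A direct expansion shows this minor is $\pm x^{t-i+1}y^{m_{i-1}}$, of bi-degree $(t-i+1,m_{i-1})$. To make the augmentation bi-homogeneous of bi-degree $(0,0)$ we must therefore place $e_i$ in bi-degree $(t-i+1,m_{i-1})$, which is exactly the shift $R[-t+i-1,-m_{i-1}]$ appearing in \eqref{gm-res-simple}.

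Next I would do the analogous check for $M_E \colon \bigoplus_{j=1}^{t} R \to \bigoplus_{i=1}^{t+1} R$. The $j$-th column of $M_E$ has only two non-zero entries: $(M_E)_{j,j}=y^{d_j}$ of bi-degree $(0,d_j)$, and $(M_E)_{j+1,j}=-x$ of bi-degree $(1,0)$. For the $j$-th basis vector $f_j$ of the source we must have
\[
\deg(f_j)=\deg(y^{d_j})+\deg(e_j)=(0,d_j)+(t-j+1,m_{j-1})=(t-j+1,m_j),
\]
and simultaneously
\[
\deg(f_j)=\deg(-x)+\deg(e_{j+1})=(1,0)+(t-j,m_j)=(t-j+1,m_j),
\]
using the defining relation $m_j=m_{j-1}+d_j$. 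The two computations agree, so one can consistently place $f_j$ in bi-degree $(t-j+1,m_j)$, that is, in $R[-t+j-1,-m_j]$, recovering the claimed left-hand term of \eqref{gm-res-simple}.

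With these gradings, both arrows in \eqref{res-HB} are bi-homogeneous of bi-degree $(0,0)$, so the complex lives in the category of $\G_m^2$-equivariant $R$-modules. Since exactness of a complex of free $R$-modules is detected by the ungraded complex, and \eqref{res-HB} is exact by Hilbert-Burch, the graded complex \eqref{gm-res-simple} is also exact, giving the desired equivariant resolution. I do not foresee a genuine obstacle here: the entire content is bookkeeping of bi-degrees, and the key compatibility $m_j=m_{j-1}+d_j$ is built into the definition of the $d_i$.
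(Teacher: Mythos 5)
Your proposal is correct and follows essentially the same route as the paper's proof: both verify that the two kinds of nonzero entries of $M_E$ ($y^{d_j}$ on the diagonal and $-x$ below it) are bi-homogeneous of the right degrees using $m_j = m_{j-1}+d_j$, and both observe that the maximal minors are the monomials $\pm x^{t-i+1}y^{m_{i-1}}$, so the augmentation is homogeneous. Your write-up merely derives the shifts from the homogeneity requirement rather than verifying them after the fact, which is a presentational difference only.
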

		\begin{proof}
			Let us check that $M_E$ is homogeneous. Then non-zero entries of $M_E$ are either $y^{d_i}$ on the $(i,i)$-th place or $-x$ on the $(i+1,i)$-th. 
			Let us recall that from the definition $m_i=\sum_{j\leq i} d_j$.
			In the first case $$(-t+i-1,-m_i)+(0,d_i)=(-t+i-1,-m_i+d_i)=(-t+i-1,-\sum_{j<i} d_j) = (-t+i-1,-m_{i-1})$$
			which is the degree of the $i$-th component of the target module. \\
			In the second case
			$$ (-t+i-1,-m_i)+(1,0)= (-t+(i+1)-1,-m_{(i+1)-1})$$
			which is the degree of $(i+1)$-th component of the target module.
			Therefore the homomorphism of modules $M_E$ is homogeneous. To complete the proof we notice that maximal minors of $M_E$ are $(x^{t},x^{t-1} y^{m_1},\ldots, xy^{m_{t-1}}, y^{m_t})$, in particular, they are homogeneous. This proves the lemma.
		\end{proof} 
		Now we give a graded structure on the polynomial algebra $\kk[\mathbf{a}]$, the tuple $\mathbf{a}$ was defined in Definition \ref{matrix-general}.
		\begin{defi} \label{grading}
			We endow $R[\textbf{a}]$ with a $\Z^2$-grading
			$$ \deg(a_{i,j})= (i-j,m_j-m_{i-1}),$$
			$$ \deg(a_{i,j,k}) = (i-j,m_j-m_{i-1}-k).$$
			In this manner, we have also induced  a $\G_m^2$-action on the base scheme $B$.
		\end{defi}
		
		\begin{remark} \label{UxUymatrices}
			We define two matrices encoding the bi-degree of $H_I$, $U_x(E)$ and $U_x(E)$ of degrees with respect to the $x$-grading and the $y$-grading, respectively. In \cite[page 5]{Aldo-HB} authors consider the degree matrix $U(E)$ which is exactly $U_x(E)+U_y(E)$.
		\end{remark}
		
		\begin{exm}
			Let us see what are the degree encoding matrices in the case of Example \ref{first-exm}. From Definition \ref{grading} we see that the degrees with respect to the $x$-grading can be read out straight from the dimensions of the matrix $M_E$
			$$U_x(E)=\begin{pmatrix}
				0&-1&-2\\
				1&0&-1\\
				2&1&0\\
				3&2&1\\
			\end{pmatrix}.$$
			We can observe that matrix encoding the $y$-grading is determined by the diagonal $(i,i)$ of the matrix $M_E$
			$$U_y(E)=\begin{pmatrix}
				1&1&3\\
				0&0&2\\
				0&0&2\\
				-2&-2&0\\
			\end{pmatrix}.$$
			
		\end{exm}
	
		\begin{remark}
			To avoid confusion we want to emphasise the degree of a monomial with respect to $x$ is \textbf{not} the same as the degree with respect to the $x$-grading. For example, the monomial $a_{i,j,k}x^l$ has the degree $l$ with respect to $x$, and $i-j+l$ with respect to the $x$-grading.
		\end{remark}
		
 
	 	\begin{lem} \label{homgeneity} 
	 		The morphism $\pi$ described in Definition \ref{matrix-general} is $\G_m^2$-equivariant
	 	\end{lem}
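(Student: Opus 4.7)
The plan is to show that the matrix $\mathcal M$, viewed as a matrix with entries in $R[\ab] = \kk[x,y,\ab]$, is bi-homogeneous with respect to a natural bi-grading on $R[\ab]$ that restricts to the bi-gradings on $R$ used in Lemma \ref{simple-resolution} and on $\kk[\ab]$ from Definition \ref{grading}. Once this is established, the ideal $I_t(\mathcal M)$ generated by maximal minors is a bi-homogeneous ideal, the subscheme $\V(I_t(\mathcal M)) \subset \A^2\times B$ is $\G_m^2$-stable, and $\pi$ is the restriction of the equivariant projection $\A^2 \times B \to B$.

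Concretely, I would extend the grading to $R[\ab]$ by declaring $\deg(x)=(1,0)$, $\deg(y)=(0,1)$, and, since the constraint in Definition \ref{matrix-general} forces $a_{i,j}=\sum_k a_{i,j,k} y^k$ to be homogeneous of bidegree $(i-j,\,m_j-m_{i-1})$ once we set
$$ \deg(a_{i,j,k}) = (i-j,\; m_j - m_{i-1} - k).$$
This is consistent with Definition \ref{grading}: each monomial $a_{i,j,k}y^k$ contributes $(i-j,m_j-m_{i-1}-k)+(0,k) = (i-j,m_j-m_{i-1})$.

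Next I would verify entry-by-entry that $\mathcal M$ defines a graded homomorphism of the same twisted free modules appearing in Lemma \ref{simple-resolution}, namely
$$ \bigoplus_{j=1}^{t} R[\ab][-t+j-1,-m_j] \xrightarrow{\mathcal M} \bigoplus_{i=1}^{t+1} R[\ab][-t+i-1,-m_{i-1}].$$
For this the $(i,j)$-entry must have bidegree $(i-j,\,m_j - m_{i-1})$. The structural entries $y^{d_i}$ on position $(i,i)$ and $-x$ on position $(i+1,i)$ already have the correct bidegree by the computation in the proof of Lemma \ref{simple-resolution}, and the deformation summands $a_{i,j}$ have the right bidegree by the previous paragraph. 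Therefore $\mathcal M$ is bi-homogeneous, its maximal $t$-minors are bi-homogeneous elements of $R[\ab]$, and $I_t(\mathcal M)$ is a bi-homogeneous ideal.

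Finally, I would conclude: the $\G_m^2$-action on $\A^2\times B$ (product of the standard action on $\A^2$ with the action on $B$ from Definition \ref{grading}) preserves $\V(I_t(\mathcal M))$ since the defining ideal is bi-homogeneous, and the projection to $B$ is equivariant by construction, hence so is its restriction $\pi$. The only real content is the degree bookkeeping in step two; there is no genuine obstacle once the grading on the $a_{i,j,k}$ is chosen to match the twist pattern of the Hilbert--Burch resolution.
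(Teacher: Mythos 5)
Your proposal is correct and follows essentially the same route as the paper: both verify that $\mathcal M$ is a bi-homogeneous map between the twisted free modules of Lemma \ref{simple-resolution}, so that $I_t(\mathcal M)$ is homogeneous, $\V(I_t(\mathcal M))$ is $\G_m^2$-stable, and $\pi$ is the restriction of the equivariant projection. Your explicit assignment $\deg(a_{i,j,k})=(i-j,\,m_j-m_{i-1}-k)$ just spells out the grading that Definition \ref{grading} states at the level of the polynomials $a_{i,j}$.
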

	 	\begin{proof}
			We proceed as in Lemma \ref{simple-resolution}.
			We need to check that the matrix $\mathcal M$ is homogeneous
			 $$(-t+j-1,-m_j-)+(i-j,m_j-m_{i-1})=(-t+i-1,-m_{i-1}),$$
			which is the degree of $i$-th component of the target module. \\
			This implies that the vanishing locus $V(I_t(\mathcal M))$ is a $\G_m^2$-stable subscheme of $\A^2\times B$. Thus the morphism $\pi$, being the projection onto the second factor restricted to the vanishing locus, is {$\G_m^2$-equivariant}.
	 		\end{proof}
 			
 		The lemma above implies that if we take $S\subset B$ a $\G_m^2$-stable subscheme, such that $\pi$ is flat and finite over $S$ then the induced map for $S$ to $\Hilb^d(\A^2)$ is equivariant. This property is crucial in the next section concerned with the tangent space. Also, it is essential to relate our construction with the Białynicki-Birula cells in Section \ref{chapter7}.  
 		
	\section{Tangent space} \label{chapter4}
	In the previous section, we have made our construction equivariant with respect to the $\G_m^2$-action compatible with the usual $\G_m^2$-action on $\A^2$.
	In this section, we use the equivariance to describe how the family of ideals behaves when we infinitesimally perturb the monomial ideal. In Theorem \ref{iso-tangent} we show that any first-order deformation of $E$ is induced by the morphism $\pi$. In Theorem \ref{infinitesimal-phi} we show that this is also the case for higher-order infinitesimal deformations.
	  \vskip 0.2 cm We start by restricting the morphism $\pi$ to the tangent space at the origin $0\in B$. Let us take a first-order infinitesimal thickening of the origin $B_0 \subset B$, such that $T_0B_0 = T_0B$. By the Schaps Theorem (\nolinebreak{Theorem~\ref{Hilbert-Burch}}) morphism $\pi_{|\pi^{-1}(B_0)}$ is flat and by Nakayama's lemma \cite[\href{https://stacks.math.columbia.edu/tag/051F}{Tag 051F}]{stacks-project} it is finite, thus it induces a morphism to $\Hilb^d(\A^2)$. This map induces a map on tangent spaces which we denote by $d\phi_E\colon T_0B \to T_{[E]}\Hilb^d(\A^2)$.

	The map $d\phi_E$ is linear, to show that is is an isomorphism it is enough to check that it is a monomorphism and compute the dimensions of the domain and the codomain. We start by investigating the latter. By \cite{Fogarty} the Hilbert scheme $\Hilb^d(\A^2)$ is smooth of dimension $2d$.
	\begin{exm}
		Let us take the ideal $(x^2,xy^2,y^3)$, it is determined by the sequence fo $y$ exponents on the diagonal of Hilbert-Burch matrix, in this case $(2,1)$.
		One can present the monomial basis of the quotient algebra in the form of a Young diagram.
		$$ \begin{ytableau}
			y^2  \\
			y & xy \\
			1 & x 
		\end{ytableau}$$
		If we add an element $d_3$ as the last element of the sequence of exponents, we will obtain the ideal $(x^3,x^2y^2,xy^3,y^{3+d_3})$.
		The new diagram is constructed from the previous on by shifting it to the right by $1$ and adding a column of height $2+1+d_3$
		
		\begin{center} 
			\begin{tabular}{ c c c }
				\begin{ytableau}
					\\
					\ldots \\
					\ldots \\
					\ldots  \\
					& \\
					& & \\
					& &	  \\ 
				\end{ytableau}
			\end{tabular}
		\end{center}
		
		The same happens in general, thus at each step, we add the sum of the new sequence (in this case $1+2+d_3$) to the length. \vskip 0.2 cm
	\end{exm}
	\begin{lem} \label{tangent-ES} 
		The dimension of $B$ is equal to the dimension of $\Hilb^d(\A^2)$, that is to $2d$.
	\end{lem}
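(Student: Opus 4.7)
The plan is to compute $\dim B$ directly from Definition~\ref{matrix-general} by counting the free parameters $a_{i,j,k}$, and then to match that count with $2d$ via a standard double-sum manipulation. Since $\Hilb^d(\A^2)$ is smooth of dimension $2d$ by Fogarty's theorem, this yields the equality.

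First I would enumerate the coefficients column by column. For a fixed $j\in\{1,\ldots,t\}$, the entries $a_{i,j}$ range over $i\in\{1,\ldots,t+1\}$. The degree constraints of Definition~\ref{matrix-general} give $\deg_y a_{i,j}<d_i$ for $i\leq j$ and $\deg_y a_{i,j}<d_j$ for $i>j$. Hence the number of coefficients in column $j$ is
\[
\sum_{i=1}^{j} d_i \;+\; (t+1-j)\,d_j \;=\; m_j + (t+1-j)\,d_j,
\]
using the definition $m_j=d_1+\cdots+d_j$. Summing over $j$,
\[
\dim B \;=\; \sum_{j=1}^{t} m_j \;+\; \sum_{j=1}^{t}(t+1-j)\,d_j.
\]

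Next I would compute $d=\dim_\kk \kk[x,y]/E$ combinatorially from the staircase of $E$. Since $E=(x^t,x^{t-1}y^{m_1},\ldots,xy^{m_{t-1}},y^{m_t})$, the monomials outside $E$ are exactly those $x^{t-k}y^b$ with $1\le k\le t$ and $0\le b<m_k$, so
\[
d \;=\; \sum_{k=1}^{t} m_k.
\]
Finally, a swap-of-summation $m_j=\sum_{i\leq j}d_i$ yields
\[
\sum_{j=1}^{t} m_j \;=\; \sum_{i=1}^{t} d_i\,\#\{j:i\leq j\leq t\} \;=\; \sum_{i=1}^{t}(t+1-i)\,d_i,
\]
so the two summands in the expression for $\dim B$ are equal, each of them equal to $d$, and thus $\dim B=2d=\dim \Hilb^d(\A^2)$.

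There is no real obstacle here: the argument is purely combinatorial bookkeeping, and the only subtlety to watch is the asymmetric treatment of the degree bound $d_i$ versus $d_j$ above and below the diagonal, which is precisely what makes the two halves of the sum combine cleanly into $2\sum_k m_k$.
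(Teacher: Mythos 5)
Your proof is correct, and it is the same underlying argument as the paper's: count the coordinates $a_{i,j,k}$ of $B$ and compare with $2d$, invoking Fogarty's smoothness theorem for the dimension of $\Hilb^d(\A^2)$. The only difference is organizational: the paper runs an induction on $t$, showing that appending $d_{t+1}$ to the sequence adds a row and a column to $\mathcal M$ contributing $2m_{t+1}$ new parameters while the colength grows by $m_{t+1}$, whereas you compute the count in closed form, column by column, as $\sum_j\bigl(m_j+(t+1-j)d_j\bigr)$ and then identify both summands with $d=\sum_k m_k$ by swapping the order of summation. Your version is slightly more self-contained (it makes the identity $\sum_j m_j=\sum_i(t+1-i)d_i$ explicit rather than absorbing it into the induction step), and your per-column tally checks out against Example~\ref{first-exm}, where the columns contribute $4+1+5=10=2\cdot 5$. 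Both arguments are complete; no gap.
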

	\begin{proof}
		Let us take a monomial ideal $E$. It is uniquely determined by the sequence of $y$ exponents on the diagonal $\mathbf d = (d_1,\ldots ,d_t)$. We will proceed inductively with respect to $t$, for given $\mathbf d$ the spread-out matrix will be denoted by $\mathcal M_{\mathbf d}$.
		First, let us consider the ideal $E_{d_1}=(y^{d_1},x)$. The colength of $E_{d_1}$ is $d_1$, the quotient algebra has a basis $1,y,y^2,\ldots ,y^{d_1-1}$. The sequence has only one element $(d_1)$. The spread-out matrix $\mathcal M_{(d_1)}$ is
		$$
		\begin{pmatrix}
			y^{d_1}+a_{1,1}\\
			-x+a_{2,1}
		\end{pmatrix}.
	$$
	The dimension of the base scheme $B_{d_1}$ is $2d_1$, each $a_{i,j}$ is a polynomial of dimension lower than $d_1$, thus there are $2d_1$ variables. 
	That concludes the basis for the induction. \vskip 0.2 cm
	Now let us look at the matrix $\mathcal M_{\mathbf d}$, when we add an additional element to the sequence, we are adding one row and one column

		$$	\begin{pNiceMatrix}
		
		y^{d_1} & \ast  & \cdots   & \ast &  	\Block[fill=blue!15,rounded-corners]{6-1}{} \ast \\
		
		\ast & 	y^{d_2}   & \cdots  &  \ast & \ast \\
		\vdots &  \vdots  & \ddots   & \vdots & \vdots  \\
		\ast & \ast  & \cdots  & y^{d_t} & \ast  \\
		\ast & \ast  & \cdots     & \ast & y^{d_{t+1}} \\
		\Block[fill=blue!15,rounded-corners]{1-5}{} \ast & \ast  & \cdots  & \ast  & \ast  \\
	\end{pNiceMatrix}.$$
	
	Each added entry is dominated by exactly one of the diagonal elements $y^{d_j}$, and every such diagonal element dominates exactly two new entries one over and one under the diagonal
$$	\begin{pNiceMatrix}
	
	y^{d_1} & \ast  & \cdots   & \ast &  	\Block[fill=blue!15,rounded-corners]{6-1}{} \ast \\
	
	\ast & 	 \Block[fill=red!30]{1-4}{} \Block[fill=red!30]{5-1}{} y^{d_2}   & \cdots  &  \ast & \ast \\
	\vdots &  \vdots  & \ddots   & \vdots & \vdots  \\
	\ast & \ast  & \cdots  & y^{d_t} & \ast  \\
	\ast & \ast  & \cdots     & \ast & y^{d_{t+1}} \\
	\Block[fill=blue!15,rounded-corners]{1-5}{} \ast & \ast  & \cdots  & \ast  & \ast  \\
\end{pNiceMatrix}.$$
	Thus we have added $2\sum d_j$ parameters.
	\end{proof}
	
	\begin{remark}
		Instead of the computation above we could also almost repeat the argument \cite[Lemma 3.2]{ES1} for Resolution \eqref{gm-res-simple} which is a resolution induced by a slightly different matrix than in \cite[Lemma 3.1]{ES1}.
	\end{remark}

	\begin{twr}\label{iso-tangent}
		The morphism $d\phi_E$ is an equivariant isomorphism of tangent spaces $T_{0}B~\to T_E~Hilb^d(\A^2)$.
	\end{twr}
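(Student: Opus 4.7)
The plan is to combine the Hilbert--Burch theorem with the dimension count. By Lemma~\ref{tangent-ES} and Fogarty's smoothness theorem, both $T_0 B$ and $T_{[E]} \Hilb^d(\A^2)$ have dimension $2d$. Since $d\phi_E$ is linear and $\G_m^2$-equivariant (Lemma~\ref{homgeneity}), it suffices to prove injectivity.

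To identify the kernel, note that a tangent vector $\vec a \in T_0 B$ is encoded by a matrix $A_{\vec a}$ whose entries lie in $\kk[y]$ with the prescribed degree bounds, and the corresponding first-order lift of $M_E$ is $\mathcal M_\epsilon := M_E + \epsilon A_{\vec a}$ over $R[\epsilon]/(\epsilon^2)$ with $R=\kk[x,y]$. By the Schaps part of Theorem~\ref{Hilbert-Burch}, the ideal $I_t(\mathcal M_\epsilon)$ is a flat deformation of $E$ and $d\phi_E(\vec a)$ is its class. If $\vec a \in \ker d\phi_E$, then $\mathcal M_\epsilon$ and the trivial lift $M_E[\epsilon]$ present the same ideal of colength $d$, so by uniqueness of the Hilbert--Burch presentation they are gauge-equivalent over $R[\epsilon]/(\epsilon^2)$. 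Writing the gauge as $(\mathrm{Id}+\epsilon P',\, \mathrm{Id}+\epsilon Q')$ with $P' \in M_{t+1}(R)$, $Q' \in M_t(R)$ yields
\[ A_{\vec a} = P' M_E - M_E Q'. \]

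The main step is to show that this equation forces $A_{\vec a} = 0$ once the prescribed form of $A_{\vec a}$ is used. Expanding entrywise, $(P' M_E - M_E Q')_{ij} = P'_{ij} y^{d_j} - y^{d_i} Q'_{ij} - x\bigl(P'_{i,j+1} - Q'_{i-1,j}\bigr)$, with the natural boundary conventions at the edges of the matrix. Since $A_{\vec a}$ is $x$-free, the $x$-contributions must cancel for every $(i,j)$; combined with the bounds $\deg_y A_{ij} < d_j$ for $i>j$ and $\deg_y A_{ij} < d_i$ for $i \leq j$, a descending induction along the diagonals of $M_E$ propagates these cancellations and forces every entry of $A_{\vec a}$ to vanish. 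The main obstacle lies in this bookkeeping, where the two diagonals $y^{d_j}$ and $-x$ of $M_E$ interact simultaneously with $P'$ and $Q'$.

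An alternative that avoids the combinatorics uses the equivariance directly: $d\phi_E$ splits as a direct sum of maps between $\G_m^2$-weight subspaces. I would read off the weights of $T_0 B$ from Definition~\ref{grading} and compare them with the weights of $T_{[E]} \Hilb^d(\A^2)$ computed by Ellingsrud and Str\o mme in \cite[Lemma~3.2]{ES1}. If the two generating functions match weight by weight, then, since both total dimensions equal $2d$, injectivity reduces to checking that each basis vector of $T_0 B$ has a nonzero image, which can be verified by a direct matrix computation in a few bidegrees.
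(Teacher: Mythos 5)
Your setup (dimension count via Lemma~\ref{tangent-ES} and Fogarty, plus reduction to injectivity using equivariance) matches the paper, but both of your proposed routes to injectivity stop short of the actual argument, and the second one contains a logical flaw. In route (a), the identity $A_{\vec a}=P'M_E-M_EQ'$ and the claim that ``a descending induction along the diagonals\dots forces every entry of $A_{\vec a}$ to vanish'' is precisely the entire content of the injectivity statement; you assert it but do not carry it out, and you yourself flag it as the main obstacle. (The gauge-equivalence step also needs justification: you must produce the comparison maps $\mathrm{Id}+\epsilon P'$, $\mathrm{Id}+\epsilon Q'$ between the two resolutions over $R[\epsilon]/(\epsilon^2)$ and argue they can be normalized to reduce to the identity mod $\epsilon$, which is delicate when the resolution is non-minimal, i.e.\ when some $d_i=0$.) In route (b), the final reduction is not valid: even if the weight multiplicities of $T_0B$ and $T_{[E]}\Hilb^d(\A^2)$ agree and every basis vector of $T_0B$ has nonzero image, a linear map between weight spaces of dimension greater than one need not be injective. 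And these weight spaces are genuinely not one-dimensional in general: for $E=(x^2,xy,y^2)$ the tangent character is $t_1^2t_2^{-1}+t_1^{-1}t_2^2+2t_1+2t_2$, so the weights $t_1$ and $t_2$ each occur with multiplicity two.

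The paper's proof avoids both difficulties. It decomposes only by the weight $w_x$ with respect to the $x$-grading, which identifies the weight-$w_x$ subspace of $T_0B$ with matrices supported on a single ``third diagonal'' with entries $c_1,\dots$ in $\kk[y]$, $c_j$ sitting in position $(w_x+j,j)$. Taking $i_0$ minimal with $c_{i_0}\neq 0$, the $(i_0+1)$-st maximal minor of the perturbed matrix is computed explicitly to be $y^{m_{i_0}}x^{t-i_0}-c_{i_0}(-x)^{t-i_0}y^{m_{i_0-1}}$, and the degree bound $\deg_y(c_{i_0})<d_{i_0}$ shows the perturbation term does not lie in $E$; hence the corresponding element of $\operatorname{Hom}(E,R/E)=T_{[E]}\Hilb^d(\A^2)$ is nonzero. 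This replaces the gauge bookkeeping by a single explicit minor computation. If you want to salvage route (a), you must actually run the induction on the equation $A_{\vec a}=P'M_E-M_EQ'$; if you want to salvage route (b), you must refine the decomposition so that injectivity on each piece can be checked by exhibiting one nonzero image, which is in effect what the paper's $i_0$-argument does within each $x$-weight space.
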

		\begin{proof}
			
			In Lemma \ref{tangent-ES} we have shown that the dimension of the domain and the codomain agree, thus it is enough to check that the kernel of $d\phi_E$ is trivial.
			
			Since, by Lemma \ref{homgeneity}, the map $\pi_E$ is equivariant, the induced map of tangent spaces to fixed points is an equivariant linear map between them. It is enough to show injectivity for the subspace of the weight $w_x$ for the fixed weight $w_x$ with respect to the $x$-grading. On the domain it is a linear subspace consisting of $(t+1)\times  t$ matrices "with the third diagonal":
			$$ \mathcal M =
			\begin{pmatrix}
				y^{d_1} & 0 & 0 & \ldots & c_1 & \ldots & \ldots & 0 \\
				-x & y^{d_2} & 0 & \ldots & \ldots & c_2 & \ldots & 0 \\
				0 & -x & y^{d_3} &  \ldots &\ldots & \ldots & c_3 & 0 \\
				\vdots & \vdots & \ddots  & \ddots & \vdots & \vdots & \vdots & \vdots  \\
				0 & 0 & 0 &  \ldots &\ldots & \ldots & \ldots & c_{t-w_x} \\
				\vdots & \vdots & \vdots  & \vdots & \ddots & \ddots & \vdots & \vdots  \\
				\vdots & \vdots & \vdots  & \vdots & \vdots & \ddots & \ddots & \vdots  \\
				0 & \ldots & \ldots  & \ldots & \ldots  & \ldots   & -x & y^{d_t} \\
				0 & \ldots & \ldots & \ldots & \ldots  & \ldots  & 0 & -x \\
			\end{pmatrix},
			$$
			where $c_j$ is on the $(w_x+j,j)$ entry.
			Let us assume that $w_x<0$, so that "the third diagonal" $\{c_i\}$ lies over the diagonal. In the other case, the argument is almost identical. \vskip 0.2 cm
			
			Now we will show that the map of tangent spaces is injective when restricted to such matrices.
			Let us assume that the lowest index $i$ such that $c_i\neq 0$ is $i_0$, then the $(i_0+1)$-th minor depends only on $c_{i_0}$. This minor is exactly:
			$$ y^{m_{i_{0}}}x^{t-i_0} - c_{i_0}(-x)^{t-i_0}\Pi_{i<i_0}y^{d_{i}} = y^{m_{i_{0}}}x^{t-i_0} - c_{i_0}(-x)^{t-i_0}y^{m_{i_{0}-1}}. $$
			Since $\deg(c_{i_0})< i_0$, this minor does not belong to $E$, and the thus image of matrix $M$ is not zero, which means that the kernel of $d\phi_E$ is trivial.
		\end{proof}
	We have seen that the map $\pi$ controls all first-order deformations, now we extend this result to completed local rings.
		 
	\begin{twr}\label{infinitesimal-phi}
		The restriction of $B$ to the formal neighbourhood of the origin induces an isomorphism of spectra of completed local rings $$\hat d\phi_E\colon \Spec(\kk[[\ab]]) = \Spec(\widehat{\O_{B,0}})\to \Spec(\widehat{\O_{\Hilb^d,E}})  $$
		which is a lift of morphism of tangent spaces $d\phi_E$.
	\end{twr}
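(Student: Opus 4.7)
The plan is to construct $\hat d\phi_E$ by pro-representability from the Schaps part of Theorem \ref{Hilbert-Burch} and then upgrade the tangent-space isomorphism of Theorem \ref{iso-tangent} to an isomorphism of complete local rings by invoking smoothness of both source and target together with the dimension count from Lemma \ref{tangent-ES}.

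First, I would construct the morphism on Artinian test rings. Let $A$ be an Artinian local $\kk$-algebra with residue field $\kk$, and fix any $\kk$-algebra map $\kk[\ab]\to A$ sending every $a_{i,j,k}$ into $\mathfrak m_A$. The Schaps statement in Theorem \ref{Hilbert-Burch} applied to the base change of the spread out matrix $\mathcal M$ to $R\otimes_\kk A$ produces a free resolution of $(R\otimes_\kk A)/I_t(\mathcal M_A)$ and shows that this quotient is $A$-flat. Its closed fibre is $R/E$, of $\kk$-dimension $d$, so by Nakayama's lemma the quotient is a finitely generated $A$-module of constant rank $d$. Hence it defines an $A$-point of $\Hilb^d(\A^2)$ with closed point $[E]$. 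As $A$ ranges over the Artinian quotients of $\widehat{\O_{B,0}}=\kk[[\ab]]$, these $A$-points assemble compatibly and pass to the inverse limit, yielding a morphism
$$\hat d\phi_E\colon \Spec\kk[[\ab]]\longrightarrow\Spec\widehat{\O_{\Hilb^d,[E]}}$$
whose reduction modulo $\mathfrak m_B^2$ is exactly the map $d\phi_E$ of Theorem \ref{iso-tangent}.

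Next, I would argue that $\hat d\phi_E$ is an isomorphism by comparing regularity and dimension. The source $\kk[[\ab]]$ is a regular complete local ring, and its Krull dimension equals the number of parameters $a_{i,j,k}$, which by Lemma \ref{tangent-ES} is $2d$. The target is a complete regular local ring of dimension $2d$ as well, since by Fogarty's theorem $\Hilb^d(\A^2)$ is smooth of dimension $2d$ at every point. By Theorem \ref{iso-tangent} the induced map on cotangent spaces $\mathfrak m/\mathfrak m^2$ is an isomorphism. The standard criterion then applies: a local homomorphism of complete Noetherian local rings is surjective as soon as it is surjective on $\mathfrak m/\mathfrak m^2$ (by the complete form of Nakayama's lemma), and a surjection between regular local rings of the same Krull dimension is automatically injective. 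Hence $\hat d\phi_E$ is an isomorphism.

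The only point that is not bookkeeping is the construction step, and there the key input is entirely packaged in the Schaps refinement of the Hilbert–Burch theorem: it is what guarantees that the lifted matrix $\mathcal M_A$ continues to give a resolution with flat cokernel over every Artinian base, so that the family $\pi$ really does define a map on formal neighbourhoods with no further flatness or finiteness obstruction. With this in hand, the remainder of the proof is formal.
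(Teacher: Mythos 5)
Your proposal is correct and follows essentially the same route as the paper: both use the Schaps refinement of the Hilbert--Burch theorem to get flatness of the family over every Artinian thickening of the origin, Nakayama's lemma for finiteness, a passage to the inverse limit to obtain the map on formal neighbourhoods, and then the tangent-space isomorphism of Theorem \ref{iso-tangent} together with smoothness and the dimension count to conclude. The only cosmetic difference is in the limit step, where the paper explicitly verifies (via a Mittag--Leffler argument and the local criterion for flatness) that the limit family over $\kk[[\ab]]$ is still finite and flat, whereas you invoke pro-representability of the completed local ring of the Hilbert scheme; both are standard and valid.
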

		\begin{proof}
			Since both $B$ and the Hilbert scheme are smooth of the same dimension, if the map coming from $\pi$ exists then it is an isomorphism and a lift of $d\phi_E$.
			
			We consider an infinitesimal thickening of the origin $0\in B$, let us call it $$(\Spec(A),0) \to (\Spec(B),0),$$
			$$	\begin{tikzcd}
				\V(I_t(M_E)) \arrow[d, "\pi_0"] \arrow[r, hook] & \V(I_t(\mathcal M))_{|A} \arrow[d, "\pi_A"] \arrow[r, hook] & \V(I_t(\mathcal M)) \arrow[d, "\pi"] \\
				\{0\} \arrow[r, hook] & (\Spec(A),0) \arrow[r, hook] & (\Spec(B),0).
				\end{tikzcd} $$
			The map $\pi_A$ is coming from a lift of the matrix $M_E$ to the Artinian algebra $A$, thus by the Hilbert-Burch Theorem \ref{Hilbert-Burch} this map is flat.
			 By the Nakayama's lemma \cite[\href{https://stacks.math.columbia.edu/tag/051F}{Tag 051F}]{stacks-project},
			 this map is also finite of degree equal $\dim_\kk(\kk[x,y]/I_E)$. \vskip 0.2 cm
			 By $\m\subset \kk[\ab]$ we denote the maximal ideal $(a_{i,j,k})_{i,j,k}$.
			 Then we go to the inverse limit to consider the completed local ring. From the local criterion for flatness \cite[Corollary 6.9]{eisenbud}, we again get a flat family.
			 We have a system of $k[[\ab]]$-modules:
			 $$
			 \begin{tikzcd}
			 	\kk^d \arrow[r, "\cong"] & (\kk[\ab]/\m)^d \arrow[r, two heads] & R/E \\
			 	& (\kk[\ab]/\m^2)^d \arrow[u, two heads] \arrow[r, two heads] & R[\ab]/(E+\m^2) \arrow[u, two heads] \\
			 	& \ldots \arrow[r, two heads] \arrow[u, two heads] & \ldots \arrow[u, two heads] \\
			 	{(\widehat{\O_{B,0}}})^d \arrow[r, "\cong"] & (\kk[[\ab]])^d \arrow[r] \arrow[u, two heads] & R[[\ab]]/\hat E, \arrow[u, two heads]
			 \end{tikzcd}
		 	$$
			 both vertical systems satisfy the Mittag-Leffler condition since all vertical arrows are surjective, thus the bottom horizontal arrow is also surjective. As a consequence, $R[[\ab]]/\hat E$ is finite over $\kk[[\ab]]$. This way we have obtained that the morphism $\pi$ over the spectrum of the completed local ring induces a map $\hat d\pi_E$ to the Hilbert scheme. 
		\end{proof}
		From the theorem above, one could speculate that the map $\pi$ induces an \'etale rational map to the Hilbert scheme. In the next section, we will see that in general that is not the case. In Section \ref{chapter6} we establish how to control the failure of the well-definedness of that map. In Section \ref{chapter tangent2} we generalise the above theorem. For any given ideal, not necessarily a monomial one, we give an explicit description of all formal deformations in $\Hilb^d(\A^2)$.
		
	\section{Dream scenario and its failure} \label{chapter5}
		In the previous section, we have considered the infinitesimal behaviour of our family around the origin, in this section we consider the global question.
		\vskip 0.2 cm
		The dream scenario would be if the map $\pi$ was finite and flat, thus induced a map $\phi\colon B \to \Hilb^d(\A^2)$. Then its induced map of tangent spaces at the origin would be $d\phi$, described in the previous section. This way we would end up with a generically \'etale equivariant map to the Hilbert scheme constructed in a very explicit manner. The open subsets obtained that way would contain the Białynicki-Birula cells for any one-dimensional subtorus that has finitely many fixed points on the Hilbert scheme, see Lemma \ref{easy-ABB}. As a consequence, we would construct an interesting collection of explicit $\G_m^2$-stable open subsets giving more information than the Białynicki-Birula cells.
		\vskip 0.2 cm
		The construction would go in the following manner. We take the spread-out matrix $\mathcal M$, see Definition \ref{matrix-general}, it give us a sequence
		$$ R[\ab]^{\oplus  t} \xrightarrow{\mathcal M} R[\ab]^{\oplus  t+1} \to I_t(\mathcal M) \to 0 $$
		just as in the Hilbert-Burch Theorem \cite[Theorem 20.15b]{eisenbud}. If $\mathcal M$ satisfied the condition of the Hilbert-Burch Theorem on every fibre of $\pi$ and $\pi$ was finite, from the local criterion for flatness, we would get that $\pi$ is flat morphism, thus inducing map $\phi$ to the Hilbert scheme.
		The problem is that the dream scenario falls apart, $\mathcal M$ does not need to satisfy the condition of the Hilbert-Burch Theorem. The morphism $\pi$ can admits positive dimensional fibres and even if we restrict it to the locus where it does have only zero-dimensional fibres it doesn't have to be finite.
		\begin{exm} \label{exm-nonflatness}
			The matrix $\mathcal M$ does not need to satisfy the conditions of the Hilbert-Burch Theorem and the morphism $\pi$ does not need to have all fibres zero-dimensional. \vskip 0.2 cm 
			Let us take $E=(y^2,x^2)=(x^2,xy^2,y^2)$.
			In this case, the Hilbert-Burch matrix is
				$$ M_E=
			\begin{pmatrix}
				y^{2} & 0 \\
				-x 	  & 1 \\
				0 & -x 
			\end{pmatrix} .
			$$
			 Instead of working over the full base $B$, let us restrict ourselves to the linear subspace spanned by $a_{1,2,1}$ and $a_{2,1,1}$ and we consider the family of matrices restricted to this subspace:
			 $$ \mathcal M=
			 \begin{pmatrix}
			 	y^{2} & ya_{1,2,1} \\
			 	-x+ya_{2,1,1} 	  & 1 \\
			 	0 & -x 
			 \end{pmatrix} .
			 $$
			 Let us consider the maximal minors of matrix $\mathcal M$
			\begin{align*}
			 f_0 &=    x^2 - xya_{2,1,1}, \\ 
			  f_1 &= -xy^2, \\
			  f_2 &= y^2(1- a_{2,1,1}a_{1,2,1}) + xya_{1,2,1}.  \end{align*}
			 If $a_{2,1,1}a_{1,2,1}=1$ then we get the ideal generated by $(x^2,xy)$. The vanishing locus of such an ideal contains the line $\V(x) \subset \A^2$. Thus not every fibre of $\pi$ is zero-dimensional.
			 This means, by the Hilbert-Burch Theorem \cite[Theorem 20.15a]{eisenbud}, that the sequence
			 	$$ 0 \to R[\ab]^{\oplus  t} \xrightarrow{\mathcal M} R[\ab]^{\oplus  t+1} \to I_t(\mathcal M) \to 0$$
			 	is not exact.
		\end{exm}
		 The existence of positive dimensional fibres is not a major issue by itself, there always is an open neighbourhood of the origin on which the morphism $\pi$ is flat and has zero-dimensional fibres, see Lemma \ref{resultant-flat} below. The next example shows the main problem.
		
		\begin{exm} \label{exm-non-finitness}
				The morphism $\pi$ does not need to be finite over any open neighborhood of the origin. Let us take $E=(y^3,x^2)=(x^2,xy^3,y^3)$.
				The colength of $E$ is $6$, as the quotient algebra has a basis $\{1,x,xy,xy^2,y,y^2\}$ \vskip 0.2 cm
			In this case, the Hilbert-Burch matrix is
			$$ M_E=
			\begin{pmatrix}
				y^{3} & 0 \\
				-x 	  & 1 \\
				0 & -x 
			\end{pmatrix}. 
			$$
			Instead of working over the full base $B$, let us restrict ourselves to the linear subspace spanned by $a_{1,2,2}$ and $a_{2,1,2}$. Then we consider the family of matrices restricted to this subspace
			$$ \mathcal M=
			\begin{pmatrix}
				y^{3} & y^2a_{1,2,2} \\
				-x+y^2a_{2,1,2} 	  & 1 \\
				0 & -x 
			\end{pmatrix}.
			$$
			 Let us consider maximal minors of the matrix $\mathcal M$
			 \begin{align*}
			  f_0 &=    x^2 - xy^2a_{2,1,2},  \\
				f_1 &= -xy^3, \\
			  f_2 &= -y^4a_{2,1,2}a_{1,2,2} + y^3    + xy^2a_{1,2,2}.  \end{align*}
			If $a_{2,1,2},a_{1,2,2} \neq 0$, then the set $\{1,x,xy,xy^2,y,y^2,y^3\}$ is linearly independent in the quotient algebra, thus the colength of the ideal is at least $7$ which is more than it should be. The morphism $\pi$ is flat on some open neighbourhood of the origin $0\in B$, see Lemma \ref{resultant-flat}, thus if it was finite, it would have a constant length of the fibres on some neighbourhood. 
		\end{exm}

	\section{Resultant strategy} \label{chapter6}
		We have seen that the dream scenario falls apart as shown in Examples \ref{exm-nonflatness}, \ref{exm-non-finitness}.
		The strategy to deal with those issues is to create manageable criteria for finiteness and flatness for algebras obtained from the Hilbert-Burch-like matrices, from Definition \ref{matrix-general}. Since the minor that is obtained after removing the first row of $\mathcal M$ has a form $$x^t+ \text{(elements of lower degree with respect to $x$)},$$ to show finiteness of $\pi$ it is enough to show that the element $y$ in $R[\ab]/I_t(\mathcal M)$ is integral over $\kk[\ab]$. That is why we adopt the technique from elimination theory.
		\vskip 0.2 cm
		\begin{defi}\label{resultant}
			Let us consider the matrix $\mathcal M$ from Definition \ref{matrix-general}. The vanishing set of the ideal generated by its maximal minors defines the map $\pi$, see Definition \ref{matrix-general}.
			Let us take the first and last maximal minors of the matrix $\mathcal M$, that is, determinants of $\mathcal M$ after excluding the first and last row respectively.
			We treat those two minors as polynomials of variable $x$ over $\kk[\ab,y]$. We consider their resultant with respect to the variable $x$, it is a polynomial in $\kk[\ab,y]$, and we denote it by $\res_E$.
		\end{defi}
		
		\begin{remark}
			Since the first minor is monic with respect to $x$, by Lemma \ref{res-stable}, after changing the base $\res_E$ continues to be the resultant of two minors of the restricted matrix.
		\end{remark}
		The following lemma resolves the issue exhibited in Example \ref{exm-nonflatness}. 
		\begin{lem} \label{resultant-flat}
	 	Let $B'$ be the subscheme of $B$ defined as the complement of vanishing locus of all coefficients of the resultant ${\res_E\in k[\ab][y]}$ as a polynomial of the variable $y$.
		Then $B'$ is a $\G_m^2$-stable open subscheme of $B$. The map $\pi$ is flat over~$B'$.
		\end{lem}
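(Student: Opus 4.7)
The plan is to prove the two assertions separately: the $\G_m^2$-stability by a homogeneity argument, and the flatness by combining resultant-driven fibre finiteness with the miracle flatness theorem.

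For $\G_m^2$-stability, by Lemma \ref{homgeneity} the matrix $\mathcal M$ is $\Z^2$-homogeneous, so its maximal minors are bi-homogeneous polynomials in $\kk[x,y,\ab]$. The resultant $\res_E = \res_x(f_0,f_t)$ of the first and last minors inherits bi-homogeneity from the classical isobaric property of the resultant with respect to the $x$-grading, combined with bi-homogeneity of the coefficients in $\kk[\ab,y]$. Consequently, once $y$ is given bi-degree $(0,1)$, the polynomial $\res_E \in \kk[\ab][y]$ is bi-homogeneous. Writing $\res_E = \sum_{k} c_k(\ab)\, y^k$, each coefficient $c_k$ is bi-homogeneous in $\kk[\ab]$, so their common vanishing locus in $B$ is $\G_m^2$-invariant, and its complement $B'$ is a $\G_m^2$-stable open subscheme.

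For flatness, fix $b \in B'$ and a point $z \in Z := \V(I_t(\mathcal M))$ lying over $b$; the goal is to show that $\O_{Z,z}$ is flat over $A := \O_{B,b}$. The first step is fibre finiteness: since $\res_E$ lies in the ideal generated by $f_0$ and $f_t$, it vanishes on $Z$, so its restriction $\res_E|_b \in \kk[y]$ vanishes on the $y$-coordinates of points of $\pi^{-1}(b)$. By the definition of $B'$ this restriction is nonzero, so only finitely many $y$-values occur in the fibre; for each such value the monicity of $f_0$ in $x$ (of degree $t$, coming from the $-x$ entries on the subdiagonal) produces only finitely many $x$-values, making $\pi^{-1}(b)$ zero-dimensional. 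Now set $S = \O_{\A^2 \times B, z}$, a regular local ring of dimension $\dim A + 2$. The standard determinantal codimension bound gives $\codim_S I_t(\mathcal M)_z \leq 2$, and combined with the fibre dimension inequality $\dim \O_{Z,z} \leq \dim A + \dim_z \pi^{-1}(b) = \dim A$ this forces $\codim_S I_t(\mathcal M)_z = 2$. Since $S$ is Cohen-Macaulay, $\depth I_t(\mathcal M)_z = 2$. The first bullet of Theorem \ref{Hilbert-Burch} then provides a length-two free resolution of $\O_{Z,z}$ over $S$, so $\O_{Z,z}$ is a Cohen-Macaulay $S$-module of dimension $\dim A$. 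Miracle flatness, applied to the regular local ring $A$ and the Cohen-Macaulay module $\O_{Z,z}$ with matching absolute and relative dimensions, yields flatness of $\O_{Z,z}$ over $A$.

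The main subtlety is the codimension bookkeeping at $z$: one has to combine the formal determinantal upper bound with the analytic lower bound coming from the resultant in order to pin down the equality $\codim_S I_t(\mathcal M)_z = 2$. Once this equality is established, Hilbert-Burch supplies the resolution, miracle flatness transfers the Cohen-Macaulay property of the module into flatness over $A$, and no further input beyond the monicity of $f_0$ in $x$ is needed.
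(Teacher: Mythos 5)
Your proof is correct, and your stability argument is the same as the paper's (isobaric/homogeneity of the resultant). For flatness, however, you take a genuinely different route. The paper fixes a closed point $\mathfrak p$ of $B'$ and verifies the depth-two hypothesis of the Hilbert--Burch Theorem~\ref{Hilbert-Burch} twice --- once for $(I_t(\mathcal M))_{\mathfrak p}$ and once for $I_t(\mathcal M)\otimes\kappa(\mathfrak p)$ --- by observing that non-vanishing of $\res_E$ forces $f_0,f_t$ to have no common factor (monicity of $f_0$ in $x$ rules out factors free of $x$), hence to form a regular sequence; it then concludes by the local criterion of flatness, i.e.\ by checking that the Hilbert--Burch complex remains exact after restriction to the fibre. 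You instead use the membership $\res_E\in(f_0,f_t)$ to get zero-dimensionality of every fibre over $B'$ --- note this gives only set-theoretic finiteness of fibres, not finiteness of $\pi$, which indeed fails on $B'$ in general by Example~\ref{exm-non-finitness}, and your dimension count correctly needs only the former --- then pin down $\codim_S I_t(\mathcal M)_z=2$ by sandwiching it between the Eagon--Northcott bound and the fibre-dimension inequality, invoke Hilbert--Burch once (plus Auslander--Buchsbaum) to see that $\O_{Z,z}$ is Cohen--Macaulay of dimension $\dim \O_{B,b}$, and finish with miracle flatness. Both proofs ultimately sandwich the depth/codimension at the value $2$, just from different ends: the paper gets the lower bound from the regular sequence $(f_0,f_t)$ and the upper bound from surjectivity, while you get the upper bound from the determinantal inequality and the lower bound from fibre finiteness. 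The paper's route is more economical given that the Hilbert--Burch/Schaps machinery is already set up and reused elsewhere; yours trades the fibrewise exactness check for standard dimension theory and miracle flatness, and is equally valid.
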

			\begin{proof}
					For the first part of the statement, the resultant of two homogeneous elements with respect to a variable is homogeneous. This implies that the $\G_m^2$-action restricts to $B'$. \vskip 0.2 cm
					For the second part of the statement,
					let $\mathfrak p \subset k[\ab] $ be a maximal ideal corresponding to a closed point of $B'\subset B$, let $\kappa(\mathfrak p)$ be a residual field of $\mathfrak p$.
				 	We check that the ideals $(I_t(\mathcal M))_{\mathfrak p}$ and $I_t(\mathcal(M)) \otimes \kappa(\mathfrak p)$ have the depth equal to $2$.
				 	Let us denote $I=I_t(\mathcal M)$.
				 	Let $f_0,f_t \in R[\ab]$ be the first and the last maximal minor of $\mathcal M$. If they do not have a common factor then they constitute a regular sequence of length $2$ so $\depth (f_0,f_t) =2$, and thus $\depth(I) \geq 2$. Since $\pi$ is a surjective map, $\codim(I)\leq 2$. By \cite[Proposition 18.2.]{eisenbud} $\codim(I) \geq \depth(I)$, implying that $\depth(I)=2$. 
				 	The definition of the resultant says that the resultant $\res_E$ vanishes if and only if $f_0$ and $f_t$ admit a common factor that does not belong to $k[\ab,y]$.
				 	Since $f_0$ is monic with respect to $x$, it does not admit factors that do not depend on $x$. Therefore the resultant vanishes if and only if they do admit a common factor. This implies that the depth of ideals $(I_t(\mathcal M))_{\mathfrak p}$ and $I_t(\mathcal(M)) \otimes \kappa(\mathfrak p)$ over each point of $B'$ is constant and equal to $2$.
				 	
				 	Since the depth of $(I_t(\mathcal M))_{\mathfrak p}$ is equal to $2$, the matrix $\mathcal M$, by the Hilbert-Burch~Theorem~\ref{Hilbert-Burch}, induces the resolution:
					$$ 0\to R[\ab]_{\mathfrak p}^{\oplus  t} \xrightarrow{\mathcal M} R[\ab]_{\mathfrak p}^{\oplus  t+1} \to R[\ab]_{\mathfrak p} \to R[\ab]_{\mathfrak p}/I_t(\mathcal M) \to 0,$$
					by the local criterion of flatness \cite[Theorem 6.8]{eisenbud} it is enough to check that
						$$ 0\to (R[\ab]/{\mathfrak p})^{\oplus  t} \xrightarrow{\mathcal M} (R[\ab]/{\mathfrak p})^{\oplus  t+1} \to R[\ab]/{\mathfrak p} \to R[\ab]/{\mathfrak p}/I_t(\mathcal M) \to 0,$$
					is an exact sequence of $\kappa(\mathfrak p)$-modules, but again since the depth of $I_t(\mathcal(\mathcal M)) \otimes \kappa(\mathfrak p)$ is $2$, this sequence satisfies the assumptions of Hilbert-Burch Theorem ~\ref{Hilbert-Burch}.
			\end{proof}
			To understand finiteness of the map $\pi$ we first need to see what is the resultant over the origin.
			\begin{exm} \label{res-origin}
					On the origin of $B$, the first minor $f_0$ is $x^t$ and the last one is $y^{m_t}$, the resultant $\res_x(x^t,y^{m_t})$ is equal to $y^{tm_t}$ as it is a determinant of the $t\times t$ Sylvester matrix
				$$	\begin{pmatrix}
					y^{m_t} & 0 & 0 &0 & \ldots & \ldots \\
					0 & y^{m_t} & 0 &0 & \ldots  & \ldots\\
					0 & 0 & y^{m_t} &0 & \ldots  & \ldots\\
					0 & 0 & 0 & y^{m_t} & \ldots  & \ldots\\

					\vdots & \vdots & \vdots & \vdots & \ddots & \vdots \\
					\ldots & 0 & 0 & 0 & \ldots & y^{m_t}  \\
					\end{pmatrix}. $$
			\end{exm}
			Now we state the crucial corollary of this paper.
			\begin{stwr} \label{lemma-finitness}
				On the locally closed subscheme $S \subset B'$ on which the resultant is a polynomial of degree exactly $tm_t$, the map $\pi$ is finite and flat.
			\end{stwr}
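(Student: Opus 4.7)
The plan is to treat the two conclusions separately. Flatness of $\pi|_S$ comes for free from the hypothesis $S \subseteq B'$ together with Lemma~\ref{resultant-flat}, so the new content is finiteness. Since the coordinate ring of the family, $R[\ab]/I_t(\mathcal M)$, is generated over $\kk[\ab]$ by the two variables $x$ and $y$, it suffices to exhibit, after restriction to $S$, monic relations in $x$ and in $y$ that lie inside $I_t(\mathcal M)$.

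The relation for $x$ is built into the matrix $\mathcal M$: deleting its first row produces a $t\times t$ matrix whose diagonal entries are $-x + a_{i+1,i}$, so the first maximal minor has the form $f_0 = (-x)^t + (\text{terms of lower degree in } x)$, and $f_0$ lies in $I_t(\mathcal M)$. For $y$, the plan is to use the resultant $\res_E = \res_x(f_0, f_t)$ from Definition~\ref{resultant}. The first resultant property recalled in the Preliminaries places $\res_E$ in $(f_0, f_t) \cap \kk[\ab][y] \subseteq I_t(\mathcal M) \cap \kk[\ab][y]$, so $\res_E$ is already a relation for $y$ independent of $x$. By the very definition of $S$, the coefficients of $y^k$ in $\res_E$ for $k > tm_t$ vanish identically on $S$, while the coefficient of $y^{tm_t}$ is nowhere-zero on $S$, hence a unit in $\O_S$. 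Dividing by this unit yields a monic polynomial in $y$ of degree $tm_t$ with coefficients in $\O_S$ and still lying in $I_t(\mathcal M)|_S$, so $y$ is integral over $\O_S$ modulo $I_t(\mathcal M)$.

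Combining the two integral relations, the $\O_S$-algebra $(R[\ab]/I_t(\mathcal M))|_S$ is generated as an $\O_S$-module by the finite set $\{x^a y^b : 0 \le a < t,\ 0 \le b < tm_t\}$, and is therefore finite over $\O_S$. Together with the inherited flatness this proves both halves of the corollary. The only point that requires care, and is the main subtlety of the argument, is the correct scheme-theoretic meaning of \emph{degree exactly $tm_t$}: $S$ must be set up as the closed locus where the coefficients of $y^k$ for $k > tm_t$ all vanish, intersected with the open locus where the coefficient of $y^{tm_t}$ is nonzero, so that the leading coefficient is a genuine unit on $S$ and the division step above makes sense. One should also note that Lemma~\ref{res-stable} is available here because $f_0$ is monic in $x$, ensuring that forming the resultant commutes with the restriction to $S$. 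Once these points are fixed, no further computation is needed.
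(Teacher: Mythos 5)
Your proposal is correct and follows essentially the same route as the paper: flatness is inherited from Lemma~\ref{resultant-flat} via $S\subseteq B'$, while finiteness comes from the resultant $\res_E\in I_t(\mathcal M)\cap\kk[\ab][y]$ becoming, after dividing by the unit leading coefficient on $S$, a monic integral relation for $y$, combined with the first maximal minor $f_0$ being monic in $x$. The extra care you take with the locally closed structure of $S$ and the base-change stability of the resultant (Lemma~\ref{res-stable}) is consistent with, and slightly more explicit than, the paper's own argument.
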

				\begin{proof}
				As the statement is local on the target we substitute $S$ with an affine open subscheme.
				As we have seen in the computation above, Example \ref{res-origin}, the resultant $\res_E$ is equal to $y^{tm_t}$ modulo the ideal generated by parameters $(a_{i,j})$. In particular, $S$ is non-empty. On $S$ the resultant is of the form
				$$uy^{tm_t}+\text{terms with lower degrees with respect to } y$$
				where $u$ is invertible on $S$. Since $\res_{E} \in I_t(\mathcal M)$, this relation makes $y \in H^0(S,\O_S)[x,y]/I_t(\mathcal M)$ integral over $H^0(S,\O_S)$. As the maximal minor $f_0$ is monic with respect to $x$ it makes $x$ integral over $H^0(S,\O_S)[y]$, and thus we obtain that $H^0(S,\O_S)[x,y]/I_t(\mathcal M)$ is finite over $H^0(S,\O_S)$.
				Since the resultant has degree exactly $tm_t$ it is not zero independently on the values of parameters $a_{i,j,k}$, thus by Lemma \ref{resultant-flat} we get flatness.
				\end{proof}
			\begin{remark}
				In general, the subscheme $S$ is not closed nor open in $B$ since it is given by:
				\begin{itemize}
					\item One open condition that the coefficient with $y^{tm_t}$ does not vanish.
					\item Closed conditions that coefficients with higher exponents of $y$ are zero.
				\end{itemize}
	
			\end{remark}
			To see how the resultant strategy works in practice, let us look at Example~\ref{exm-nonflatness} and Example~\ref{exm-non-finitness} using this tool.
		\begin{exm} \label{exm-resultant}
			Let us continue the investigation started in Example \ref{exm-nonflatness}, and apply Corollary \ref{lemma-finitness}.
			We get $E=(y^2,x^2)$, and want to compute the resultant of $x^2 - xya_{2,1,1} $ and $xya_{1,2,1} + y^2(1- a_{2,1,1}a_{1,2,1})$ with respect to the variable $x$. Let us denote $a_{1,2,1}$ by $a$ and $a_{2,1,1}$ by $b$.
		    To compute the resultant is take the Sylvester matrix
	$$		\begin{pmatrix}
				1 & -yb & 0 \\
				ya & y^2(1- ba) & 0\\
				0 &  ya & y^2(1- ba) \\
			\end{pmatrix},
			$$
			and compute that the determinant of whose is \[y^4(1-ba)^2-y^4(-ba)(1-ba) 
			 = y^4(1-ba).\]
			We see the resultant is $0$ if and only if $ab=1$, which is exactly when $\mathcal M$ admits a non-trivial kernel in Example~\ref{exm-nonflatness}.
			Thus we see that in the case of the ideal $(x^2,y^2)$, we got an induced rational map to the Hilbert scheme.
			
		\end{exm}
	
			\begin{exm} \label{exm-resultant2}
		Let us see how this technique works out in the case presented in Example \ref{exm-non-finitness}.
		We get $E=(y^3,x^2)$, and want to compute the resultant of $x^2 - xy^2a_{2,1,2} $ and $     xy^2a_{1,2,2} - y^4a_{2,1,2}a_{1,2,2} + y^3$ with respect to the variable $x$. Let us denote $a_{1,2,2}$ by $a$ and $a_{2,1,2}$ by $b$.
		To compute the resultant is to take the Sylvester matrix
		$$		\begin{pmatrix}
			1 & -y^2b & 0 \\
			y^2a & y^3(1- yba) & 	0\\
			0 &  y^2a & y^3(1- yba) \\
		\end{pmatrix},
		$$
		and compute that the determinant of whose is 
		$$ y^7(ab) -y^6. $$
		In Example~\ref{exm-non-finitness}, we have seen that over $ab\neq 0$ the length of the fibres is higher than over the origin. Corollary \ref{lemma-finitness} tells us that on the closed locus $ab= 0$ we again get a map to the Hilbert scheme of points.
		
	\end{exm}

	\section{Białynicki-Birula decomposition}\label{chapter7}
		In the previous section, we have developed a tool that allows us to control the finiteness and flatness of the map $\pi$. In this section, we use this tool to show that our family describes the Białynicki-Birula cells of the Hilbert scheme of points. Since the nature of the Białynicki-Birula cells is equivariant, we heavily use the equivariance of all of the previous results. We start with a general discussion about the Białynicki-Birula decomposition on the Hilbert scheme.
		\vskip 0.2 cm
		We consider the affine plane $\A^2$ with the standard $\G_m^2$-action
		$$ (\lambda_1,\lambda_2) \cdot (x,y) = (\lambda_1 x,\lambda_2 y),$$
		 This action induces one on $\Hilb^d(\A^2)$, and for a choice of monomial ideal $E$ also on the base scheme $B_E$, see Definition~\ref{matrix-general}. Choosing a nonzero cocharacter:
		 \begin{align*}
		 		 \psi\colon \G_m &\to \G_m^2\\
		 		 t &\mapsto (t^\a,t^\beta)
		 \end{align*}
		is equivalent to endowing $R=\kk[x,y]$ with $\Z$-grading, denoted by $\deg_\psi$, such that $\deg_\psi(x)=\a$ and $\deg_\psi(y)=\b$.
		Such a choice of $\psi$ induces the Białynicki-Birula decomposition on $B$ and $\Hilb^d(\A^2)$. For specific reference for the Hilbert scheme see \cite{ES1}. For general reference on the Białynicki-Birula decomposition see \cite[Theorem 1.5]{ABB_JJLS}. The decomposition associated to $\psi$ is denoted by adding superscript $+,\psi$, e.g. $\Hilb(\A^2)^{d,+,\psi}$.
		Note that the only fixed points of $\G_m^2$-action on $\Hilb^d(\A^2)$ are the monomial ideals and for $B$ the only $\G_m^2$-fixed point is the origin. Thus for any choice of $\psi$ those subschemes are contained in the fixed point locus of $\psi(\G_m)$ acting respectively on the Hilbert scheme and $B$.
		\vskip 0.2 cm
		\begin{exm}
			We choose a cocharacter $\psi$ such that $\psi(\G_m)$ admits only finitely many fixed points.
			We describe the $\G_m$-action as follows:
			\begin{align}
				\G_m \times \Hilb^d(\A^2)&\to \Hilb^d(\A^2),\\
				(t,[I]) &\mapsto (\psi(t)\cdot [I]),
			\end{align}
			Then the Białynicki Birula cell associated to $\psi$ and centred at $[E]$ consists if the ideals $[I] \in \Hilb^d(\A^2)$ such that \[ \lim_{t\to 0} t\cdot [I] = [E]. \]
			
		\end{exm}

		 In general, the subtorus given by the cocharacter $\psi$ has more fixed points.
		\begin{defi} \label{ABB-cell-associ}
			 The Białynicki Birula cell of $\Hilb^d(\A^2)$  (respectively, $B$) associated to the cocharacter $\psi$ and centred at the monomial ideal $E$ is the connected component of the smooth variety $\Hilb^{d,+,\psi}(\A^2)$ (respectively  $B^{+,\psi}$) that contains the point $[E]$ (respectively, the origin).
		 \end{defi}
		Since $B$ is an affine space with a linear $\G_m^2$-action, it is relatively easy to establish what is $B^{+,\psi}$. It is an affine subspace spanned by those variables $a_{i,j,k}$ with bi-degree $(w_x,w_y)$ such that the induced degree $\a w_x+\beta w_y$ is non-negative, see \cite[Proposition 2.15]{ABB_JJLS}. 

		In other words, $\psi$ induces a functional on the space of all characters of $\G_m^2$. For any chosen parameter $a_{i,j,k}$, the line $\langle {a_{i,j,k}} \rangle$ can be treated as a character of $\G_m^2$, and then $B^{+,\psi}$ is spanned by those characters that are positive with respect to the cocharacter $\psi$. 
		\vskip 0.2 cm
		\begin{exm}
			Let us take an ideal $E=(x^2,xy,y^2)$.
			The spread-out matrix $\mathcal M$ is
			$$\begin{pmatrix}
				y+a_{1,1} & a_{1,2}\\
				-x+a_{2,1} & y+a_{2,2} \\
				a_{3,1}  & -x+a_{3,2}
			\end{pmatrix},$$
			where for simplicity of notation $a_{i,j}=a_{i,j,0}$.
		Let us plot the weights of variables $a_{i,j}$ on the plane
		$$	\includegraphics[scale=0.5]{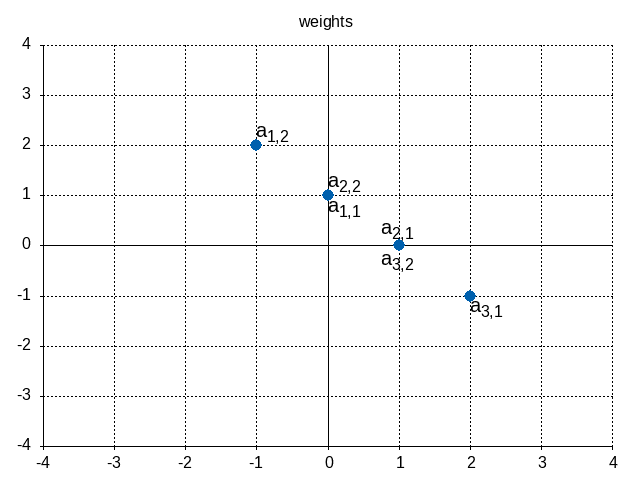} $$
		We choose a cocharacter $\psi\colon t \to (t^{1},t^{-1})$. This induces a functional on the space of weights, with the kernel being a line $x=y$
			$$	\includegraphics[scale=0.5]{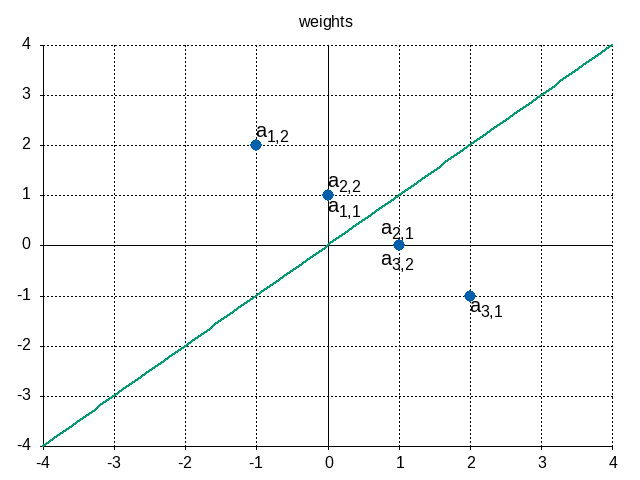} $$
		The parameters which weights lie in the upper half-plane span the Białynicki-Birula cell associated to the cocharacter $\psi$.
		\end{exm}
		Let us take a one-parameter subgroup $\psi$ acting on $\A^2$, that attracts the line $x=0$ to the origin.
		Now we show that the spread-out matrix, Definition \ref{matrix-general}, provides a rational map to the cell on the Hilbert scheme associated to such a $\psi$.
	\begin{twr}\label{theorem-abb}
		Choose a cocharacter $\psi\colon \G_m \to \G_m^2$, such that $y$ has a non-negative degree with respect to the induced grading.
		Let us restrict $B$ to $B^{+,\psi}$ the Białynicki-Birula cell associated to $\psi$. The morphism $\pi$ induces a rational $\G_m^2$-equivariant map $\phi^+ \colon B^{+,\psi} \to \Hilb^{d}(\A^2)$. The image of $\phi^+$ lies in the Białynicki-Birula cell associated to $\psi$ and centred at $E$. The map $\phi^+$ is \'etale in a $\G_m^2$-stable neighbourhood of the origin.
	\end{twr}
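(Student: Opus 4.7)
The plan is to realize $\phi^+$ via Corollary~\ref{lemma-finitness}: the key step is to show that the resultant $\res_E$ has $y$-degree exactly $tm_t$ on a non-empty $\G_m^2$-stable open subset $U\subset B^{+,\psi}$, so that $\pi|_U$ becomes finite and flat. Image and étaleness will then follow from $\G_m^2$-equivariance combined with Theorem~\ref{iso-tangent}.

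The technical core is a bi-degree bookkeeping argument. By Example~\ref{res-origin}, $\res_E$ is $\G_m^2$-bi-homogeneous of bi-degree $(0,tm_t)$, so any monomial $a_{i_1,j_1,k_1}\cdots a_{i_s,j_s,k_s}\,y^l$ appearing in $\res_E$ satisfies
\[
\sum_{n} (i_n-j_n)=0, \qquad \sum_{n} (m_{j_n}-m_{i_n-1}-k_n)+l=tm_t.
\]
Writing $\psi=(\alpha,\beta)$ with $\beta\geq 0$, each parameter surviving on $B^{+,\psi}$ has $\psi$-weight $\alpha(i_n-j_n)+\beta(m_{j_n}-m_{i_n-1}-k_n)\geq 0$. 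Summing and using the identities above gives $\beta(tm_t-l)\geq 0$, hence $l\leq tm_t$ whenever $\beta>0$. The borderline case $\beta=0$ must be handled directly: the constraint $\alpha(i_n-j_n)\geq 0$ combined with $\sum_n(i_n-j_n)=0$ forces all $i_n=j_n$, and each $a_{j,j,k}$ has strictly positive $y$-bi-degree $d_j-k>0$, so again $l\leq tm_t$. The coefficient of $y^{tm_t}$ equals $1$ at the origin, hence it is a non-zero polynomial on $B^{+,\psi}$, defining a non-empty $\G_m^2$-stable open $U$ on which the resultant has $y$-degree exactly $tm_t$. Corollary~\ref{lemma-finitness} then gives the $\G_m^2$-equivariant rational map $\phi^+\colon B^{+,\psi}\dashrightarrow\Hilb^d(\A^2)$.

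For the image, $\phi^+(0)=[E]$ follows from $\mathcal M|_0=M_E$. For $b\in U$, equivariance (Lemma~\ref{homgeneity}) gives $\psi(t)\cdot\phi^+(b)=\phi^+(\psi(t)\cdot b)$; as $t\to 0$ the right side admits a limit, so $\phi^+(b)\in\Hilb^{d,+,\psi}(\A^2)$. Since $U$ is irreducible, $\phi^+(U)$ is connected and contains $[E]$, hence $\phi^+(U)$ lies in the connected component of $\Hilb^{d,+,\psi}(\A^2)$ containing $[E]$, which is the BB cell centred at $[E]$. For étaleness, Theorem~\ref{iso-tangent} provides a $\G_m^2$-equivariant isomorphism $d\phi_E\colon T_0 B \xrightarrow{\sim} T_{[E]}\Hilb^d(\A^2)$; restricting to $\psi$-non-negative weight subspaces yields an isomorphism $T_0 B^{+,\psi}\xrightarrow{\sim} T_{[E]}\Hilb^{d,+,\psi}(\A^2)$, so $\phi^+$ is étale at the origin. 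By openness of the étale locus and $\G_m^2$-equivariance of $\phi^+$, étaleness extends to a $\G_m^2$-stable open neighbourhood.

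The main obstacle is the bi-degree bookkeeping --- identifying precisely the constraints on $B^{+,\psi}$ that force $l\leq tm_t$ and cleanly separating the cases $\beta>0$ and $\beta=0$.
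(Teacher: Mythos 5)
Your proposal is correct and follows essentially the same route as the paper: bi-homogeneity of $\res_E$ of bi-degree $(0,tm_t)$ plus non-negativity of the $\psi$-weights of the parameters surviving on $B^{+,\psi}$ forces the $y$-degree to be at most $tm_t$, with the case $\deg_\psi(y)=0$ handled separately by observing that only diagonal parameters can contribute, after which Lemma~\ref{resultant-flat} and Corollary~\ref{lemma-finitness} give the rational map and Theorem~\ref{iso-tangent} gives \'etaleness. The only (harmless) divergences are that in the $\deg_\psi(y)=0$ case you conclude directly from the strict positivity of the $y$-weights $d_j-k$ of the diagonal parameters where the paper instead computes the restricted resultant as the $t$-th power of the last minor, and that you spell out the image and \'etale-locus arguments that the paper leaves implicit.
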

			\begin{proof}
		 	As observed in Example \ref{res-origin} the resultant $\res_E$ is of degree $\deg_\psi(y)tm_t$. We need to show that on $B^+$ the resultant has no terms with degree higher than that with respect to $y$. As noted in Lemma \ref{resultant-flat} the resultant is homogeneous. If there is a non-zero term $W(\ab)y^{N}$ for $N>tm_t$, then $\deg_\psi(W(\ab))=\deg_\psi(y)(tm_t-N)<0$. Since $B^+$ is the Białynicki-Birula cell, $W(\ab)$ depends only on parameters with non-negative degrees with respect to $\psi$, thus $\deg_{\psi}(W(\ab)) \geq 0$.
		 			 	\vskip 0.2 cm
		 	Now, let us assume that $\deg_{\psi}(y)= 0$.
		 	That means that $\psi$ has a form:
		 	$$ t \to (t^\a,1),$$
		 	for $\a\neq 0$.
		 	Again we consider $W(\ab)$ as before, since $\res_E$ is homogeneous with respect to the full $\Z^2$-grading, $W(\ab)$ is also homogeneous and has degree $(0,tm_t-N)$.
		 	Let us consider what parameters can $W(\ab)$ depend on. Since we restrict ourselves to the Białynicki-Birula cell, it can only depend on those parameters $a_{i,j,k}$ such that their degree $(w_x,w_y):=(i-j,m_j-m_{i-1})$ satisfy the following inequality:
		 	$$ \a w_x + 0 \cdot w_y \geq 0.$$
		 	The only way that one can obtain $0$ as a sum of non-negative/non-positive numbers is by taking a sum of $0$-s.
		 	Thus $W(\ab)$ depend only on parameters lying on the main diagonal $(i,i)$.
		 	 As a consequence, we only need to study the resultant coming from the matrix with non-zero parameters only on the main diagonal, $a_{i,j,k}=0$ for $i\neq j$. In that case, the first minor is just $x^t$, and the last one is a product of elements on the diagonal, in particular, it is a polynomial in $y$ of degree $m_t$. \vskip 0.2 cm

		 	Therefore in both cases above, the resultant is the last minor to the power of $t$, so has the predicted degree. By Lemma~\ref{resultant-flat} we get that $\pi$ is flat over an open neighbourhood of the origin, and by Lemma \ref{lemma-finitness} we get that the family $\pi$ is finite. 
		 	\vskip 0.2 cm
		 	The last part follows from Theorem \ref{infinitesimal-phi}.
			\end{proof}

			\begin{remark}
			Since the map $\phi^+$ in Theorem \ref{theorem-abb} above is \'etale and equivariant, it restricts to \'etale map between the $\G_m$-fixed point loci.
			In particular the dimension of the connected $\G_m$-fixed point component of $\Hilb^d(\A^2)$ containing $[E]$ is equal to the dimension of the connected fixed point component of $B$ through the origin.
		\end{remark}
	
		The only $\G_m^2$-fixed points on the Hilbert scheme are the monomial ideals, thus for a general choice of the one-dimensional subtorus $\G_m$, they are the only fixed points. In this special case, we will get that our map is an isomorphism onto the cell.

		\begin{lem} \label{easy-ABB}
			Let us take the setup from Theorem \ref{theorem-abb}, but also let us assume that the only $\psi(\G_m)$-fixed point on $B^+$ is the origin. Then the induced map $\phi^{+}$ is an isomorphism onto the Białynicki-Birula cell.
		\end{lem}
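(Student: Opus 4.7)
The plan is to reduce the lemma to Lemma \ref{ABB-simples}. This requires three preparations: promoting the rational map $\phi^+$ to a regular morphism defined on all of $B^{+,\psi}$; identifying both the source and the Białynicki-Birula cell $C \subseteq \Hilb^d(\A^2)$ centred at $[E]$ as linear contracting $\psi(\G_m)$-representations whose unique fixed point is the distinguished one; and verifying that the differential at the distinguished fixed point is an isomorphism.

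First I would extend $\phi^+$ to a morphism on all of $B^{+,\psi}$. Let $U \subseteq B^{+,\psi}$ be the maximal open locus of definition of $\phi^+$. By $\G_m^2$-equivariance, $U$ is $\G_m^2$-stable, and it contains the origin since Theorem \ref{theorem-abb} asserts that $\phi^+$ is étale in a $\G_m^2$-stable neighbourhood of it. The complement $Z := B^{+,\psi}\setminus U$ is then closed and $\psi(\G_m)$-stable. The hypothesis that the origin is the only $\psi(\G_m)$-fixed point on $B^{+,\psi}$, together with the linearity of the $\psi$-action on the affine space $B^{+,\psi}$, forces $\lim_{t\to 0}\psi(t)\cdot b = 0$ for every $b\in B^{+,\psi}$. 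If $Z$ were non-empty, closedness would yield $0 \in Z$, contradicting $0\in U$. Hence $U=B^{+,\psi}$ and $\phi^+$ is a morphism.

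Next, by Theorem \ref{theorem-abb} the image of $\phi^+$ lies in $C$, so $\phi^+$ factors as a $\G_m^2$-equivariant morphism $B^{+,\psi}\to C$. By hypothesis, $B^{+,\psi}$ is an affine subspace of $B$ with linear $\psi(\G_m)$-action and is its own Białynicki-Birula cell with origin as the unique $\psi(\G_m)$-fixed point. Since $\Hilb^d(\A^2)$ is smooth, the Białynicki-Birula theorem \cite[Theorem 1.5]{ABB_JJLS} identifies $C$ with the positive tangent space $T_{[E]}^{+,\psi}\Hilb^d(\A^2)$ as a linear $\psi(\G_m)$-representation, and $[E]$ is its only $\psi(\G_m)$-fixed point directly from the definition of the cell. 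Theorem \ref{iso-tangent} further provides an equivariant isomorphism $d\phi_E\colon T_0 B \xrightarrow{\sim} T_{[E]}\Hilb^d(\A^2)$, whose restriction to the strictly positive $\psi$-weight subspaces gives $T_0 B^{+,\psi}\xrightarrow{\sim} T_{[E]} C$; in particular the two affine spaces have the same dimension.

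Applying Lemma \ref{ABB-simples} to $\phi^+\colon B^{+,\psi}\to C$ then concludes that $\phi^+$ is an isomorphism onto $C$. The main technical point is the extension of $\phi^+$ to all of $B^{+,\psi}$; once this is in place, the rest is a clean packaging of Theorem \ref{iso-tangent} and Lemma \ref{ABB-simples}.
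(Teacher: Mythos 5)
Your proof reaches the stated conclusion, but the key step --- getting $\phi^+$ defined on all of $B^{+,\psi}$ --- is done by a genuinely different argument than the paper's. The paper stays inside its resultant machinery: the coefficient $W$ of $y^{tm_t}$ in $\res_E$ is $\psi$-homogeneous of degree $0$, and since the hypothesis forces every parameter appearing on $B^{+,\psi}$ to have strictly positive $\psi$-degree, $W$ must be the constant $1$; combined with the degree bound from the proof of Theorem \ref{theorem-abb}, Corollary \ref{lemma-finitness} then gives that $\pi$ is finite and flat over \emph{all} of $B^{+,\psi}$, so the morphism is everywhere given by $b \mapsto [I_t(\mathcal M_b)]$. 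You instead take the maximal domain of definition $U$ of the rational map, note it is open, $\G_m^2$-stable and contains the origin, and use that the contracting flow forces any nonempty closed stable subset to contain the origin. That argument is sound, and it buys brevity and independence from the resultant computation. What it loses is explicitness: you only produce \emph{some} morphism extending the rational map, and you do not show that at every $b\in B^{+,\psi}$ its value is still $[I_t(\mathcal M_b)]$ --- a priori the family could fail to be finite or flat somewhere on $B^{+,\psi}$, and your extension would then be given by a different ideal there. Since this lemma (via its analogue Corollary \ref{easy-ABB-}) is what proves \cite[Conjecture 4.2]{RoserWinz-2}, where the isomorphism must literally be $N \mapsto I_t(H+N)$, the paper's stronger conclusion is the one actually needed downstream.

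Two smaller points. After extending, the claim that the image still lies in the cell $C$ needs a word: Theorem \ref{theorem-abb} only controls the image on the original domain, but equivariance together with $\lim_{t\to 0} t\cdot b = 0$ gives $\lim_{t\to 0} t\cdot \phi^+(b) = \phi^+(0) = [E]$ for every $b$, which is exactly membership in $C$. Also, the assertion that $[E]$ is the only $\psi(\G_m)$-fixed point of $C$ is not ``direct from the definition of the cell'': it requires knowing that the fixed component through $[E]$ is a reduced point, which follows from the hypothesis via the equivariant isomorphism $d\phi_E$ of Theorem \ref{iso-tangent} (the weight-zero part of $T_{[E]}\Hilb^d(\A^2)$ vanishes because the weight-zero part of $T_0B$ does). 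The paper is equally terse on this last point, so it is a matter of polish rather than a gap.
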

			\begin{proof}
				Let us consider the coefficient $W$ with $y^{tm_t}$ in $\res_E$. Since the resultant is homogeneous we have $\deg_{\psi} W=0$. The only fixed point is the origin, therefore $\deg_\psi(a_{i,j,k})>0$ for every parameter appearing in $B^+$. As a consequence, $W=1$ and we get that $\pi$ is flat over whole $B^+$. 
				By the proof of Theorem \ref{theorem-abb}, we get that globally the resultant has the degree at most $tm_t$, thus we obtain a map $\phi \colon B^+ \to \Hilb^d(\A^2)$ to the Hilbert scheme, globally.
				\vskip 0.2 cm
				The morphism $\phi$ is a map between the Białynicki-Birula cells (with single fixed points) that is an isomorphism on tangent spaces, Theorem \ref{iso-tangent}. By Lemma \ref{ABB-simples}, the map $\phi$ is an isomorphism.
			\end{proof}
			\begin{remark}
			Lemma \ref{easy-ABB} has been essentially proven before in \cite[Lemma 1]{ES-2fix}. In that paper, the matrices considered were slightly different, and at least in principle, the proof there assumed that $\kk=\C$, by considering Euclidean neighbourhoods.
			\end{remark}
		
		We have already seen that in the case of a positive-dimensional fixed point locus, the induced map does not need to be regular, see Example \ref{exm-resultant}. One can ask if it is injective, the following example provides a negative answer.
		\begin{exm} \label{exm-noninjectivness}
			Let us consider the ideal $E=(y^{13},xy^7,x^2y^3,x^3)$.
			Consider a two-dimensional family given by the following matrix
			$$
		\mathcal M= 	\begin{pmatrix}
				 y^3  & a_{1,2,2}y^2 & 0        \\
				 -x & y^4   &     0           \\
				 0  &-x   &     y^6          \\
				0  & 0    &     a_{4,3,5}y^5-x
			\end{pmatrix}.
			$$
			We denote $a_{1,2,2}$ by $a$ and $a_{4,3,5}$ by $b$.
			 Let us compute the degrees of $a,b$ with respect to the bi-degree. Since the spread-out matrix itself is homogeneous, see Lemma \ref{homgeneity}, then the top-left $2\times 2$ minor is also homogeneous, implying that the bi-degree of $a$ is $(-1,5)$. Also the entries of the matrix are homogeneous, which shows that $b$ has the bi-degree $(1,-5)$. The weights are opposite.
			 In particular, this family belongs to the Białynicki-Birula cell with a positive dimensional fixed locus.\\
			The ideal $I:=I_t(\mathcal M)$ is 	$${}\left(y^{13}+x\,y^{8}a,\,y^{12}b+x\,y^{7}ab-x\,y^{7}-x^{2}y^{2}a,\,-x\,y^{8}b+x^{2}y^{3},\,x^{2}y^{5}b-x^{3}\right)$$
			To simplify even further, let us assume that $a=b$ and $a^2=1$. 
			Since the coefficient with $xy^7$ in the second generator is $ab-1$, now this term vanishes. As a consequence, the second generator is after those simplifications equal to 
			$y^{12}a-x^{2}y^{2}a$.
			Since we have already assumed that $a\neq 0$, we may multiply this minor by $a^{-1}y$ and then subtract it from the first one, obtaining:
			$$ axy^8+x^2y^3,$$
			then by adding this to the third generator we obtain
			$2x^2y^3$, if the characteristic of field is not $2$ then, that means that our ideal contains $x^2y^3$. Thus also $xy^8$ and $x^3$ are contained in the ideal. As a consequence, also $y^{13}$ is contained.
			All in all, we get the ideal generated by:
				\begin{equation} \label{exm-ideal}
					( y^{13},y^{12}-x^2y^2,x^2y^3,x^3 ),
			 	\end{equation}
			but our assumption on the parameters has two solutions $a=1,b=1$ and $a=-1,b=-1$, thus for those two choices of the parameters we get the same ideal. If the characteristic of the field was $2$, then those two choices of parameters are identified anyway. \vskip 0.2 cm
			The presentation given in \eqref{exm-ideal} suggests that this ideal belongs to a cell associated to a monomial ideal ${E'=(y^{12},xy^{12},x^2y^3,x^3)}$. 
			Since considered ideals lie in the fixed component of the torus $(t^1,t^5)$ it should be the case also on this level, so we consider the fixed locus of the base scheme $B$ associated to $E'$
			$$\begin{pmatrix}
				y^{3}&0&y^{2}a_{1,3,2}\\
				-x&y^{9}&0\\
				0&-x&1\\
				0&0&-x\\
			\end{pmatrix}.$$
			The ideal of maximal minors is 
			${}\left(y^{12}+x^{2}y^{2}a_{1,3,2},\,-x\,y^{12},\,x^{2}y^{3},\,-x^{3}\right)$
			which for $a_{1,3,2}=1$ gives exactly ideal \eqref{exm-ideal}. Even more than that, since everything is equivariant, this shows us that on the orbit $ab=1$, we have the same equality. To summarize we get the orbit $ab=1$ in $B_E$ that is mapped non-injectively to the Hilbert scheme, but on the Hilbert scheme, this orbit approaches a different monomial ideal $E'$.
		
		\end{exm}
		\vskip 0.2 cm
		In Theorem \ref{theorem-abb} and Lemma \ref{easy-ABB} we have assumed that the induced degree of $y$ is non-negative, now we look at the remaining cases. We take a cocharacter $\psi$ such that $\deg_\psi(y)<0$ and an ideal that, when acted by the one-dimensional torus, converges to a monomial ideal when elements of the torus approach $0$. This ideal has to be supported on the line $y=0$, otherwise, the part supported outside of this line diverges to infinity.
		Unfortunately, if we take $B$ and restrict it to the Białynicki-Birula cell with respect to such a cocharacter, some of the fibres of $\pi$ can be supported outside of the kine $y=0$, as illustrated by the following example.
	
		 \begin{exm}
		 	Let us consider the monomial ideal $E=(y^3,xy,x^4)$. Let us take the 
		 	cocharacter  $\psi\colon t \to (t^{-2},t^{-3})$.
		 	By Definition \ref{matrix-general}, parameters of the spread-out matrix $\mathcal M_E$ appear only in the top-left and bottom-right corner. Now take the formula for the bi-degree of each parameter 
		 	$$ \deg(a_{i,j,k})=(i-j,m_j-m_{i-1}-k),$$
		 	$$m_{j}=\sum_{l\leq j} d_l,$$ where $d_l,$ are the exponents of $y$ on the diagonal.
		 	We establish that the following inequality on the degree with respect to $\psi$ holds 
		 	$$\deg_{\psi}(a_{i,j,k}) = -2(i-j)+(-3)(m_j-m_{i-1}-k) \geq 0,$$ 
		 	only for indices $(1,3,0)$ and $(5,4,1)$.
		 	Thus when we restrict the spread-out matrix to the cell associated to $\psi$ we get the matrix:
		 	$$\mathcal M = \begin{pmatrix}
		 		y&0&a_{1,3,0}&0\\
		 		-x&1&0&0\\
		 		0&-x&1&0\\
		 		0&0&-x&y^{2}\\
		 		0&0&0&y\,a_{5,4,1}-x\\
		 	\end{pmatrix}.$$
	 		For simplicity let us denote $a_{1,3,0}$ by $a$ and $a_{5,4,1}$ as $b$. 
		 	The first minor is	$x^{4}-x^3yb$ and the last is equal to  $y^{3}+x^{2}y^{2}a$.
		 	We compute the resultant by taking the determinant of the Sylvester matrix
		 		$$\begin{pmatrix}
		 			\vphantom{\left\{-6\right\}}1&-y\,b&0&0&0&0\\
		 			\vphantom{\left\{-5\right\}}0&1&-y\,b&0&0&0\\
		 			\vphantom{\left\{-8\right\}}y^{2}a&0&y^{3}&0&0&0\\
		 			\vphantom{\left\{-7\right\}}0&y^{2}a&0&y^{3}&0&0\\
		 			\vphantom{\left\{-6\right\}}0&0&y^{2}a&0&y^{3}&0\\
		 			\vphantom{\left\{-5\right\}}0&0&0&y^{2}a&0&y^{3}\\
		 		\end{pmatrix}.$$
		 		The resultant is $y^{13}ab^2+y^{12}$, it is not monic, so it does not guarantee finitness of the morphism $\pi$. If $ab^2=1$, then for $y=-1$ the resultant vanishes.
		 		Let us take $a=b=1$, then the ideal of maximal minors is
		 		$$I_{1,1}=(x^{2}y^{2}+y^{3},\,-x^{3}+x^{2}y-x\,y+y^{2},\,x^{2}y-x\,y^{2},\,-x^{3}y+x^{2}y^{2},\,x^{4}-x^{3}y).$$
		 		Let us set $y$ to be $-1$ then we obtain
		 		$$(x^{2}-1,\,-x^{3}-x^2\,+x+1,\,-x^{2}-x,\,x^{3}+x^{2},\,x^{4}+x^{3})$$
		 		which is contained in the ideal $(x+1)$. 
		 		We see that the fibre over $a=b=1$ is not contained in any infinitesimal thickening of the line $y=0$, thus it does not describe the Białynicki-Birula cell of the Hilbert scheme for $\psi$.
		 		\vskip 0.2 cm
		 		In general, the primary decomposition of the ideal $I_4(\mathcal M)$ of the maximal minors is
		 		$$(y^{3},\,x\,y^{2},\,x^{2}y,\,x^{3}a-y^{2}b+x\,y,\,x^{4}) \cap  (y\,b-x,\,x\,a\,b+1,\,x^{2}a+y).$$
		 		We see that those two ideals are coprime; one contains $x^4$ and the second one $ xab+1$. Thus the primary decomposition gives two connected components of $\V(I_4(\mathcal M))$, the one that contains the whole fibre over the origin describes a cell on the Hilbert scheme.
		 \end{exm}
	 
		To fix this problem, from the vanishing locus of $I_t(\mathcal M)$ we cut the locus where $y^{tm_t}$ vanishes. On each fibre it means that we intersect the finite scheme with the thickened line $y^{tm_t}=0$. This way we obtain a closed subscheme of the previous one, finite schemes obtained this way are set-theoretically supported on $y=0$. The restriction of $\pi$ to this closed subscheme we denote by~$\pi_{-}$
						\begin{figure}[H] $$\begin{tikzcd}
							\V(I_t(\mathcal M)+(y^{tm_t})) \arrow[r, "cl", hook] \arrow[rrd, "\pi_-"'] & \V(I_t(\mathcal M)) \arrow[r, "cl", hook] \arrow[rd, "\pi"] & \A^2 \times B \arrow[d] \\
							&                                                             & B.                      
						\end{tikzcd}$$ \caption{Family supported on $x$-axis}
					\label{thicken-x-axis}
				\end{figure}
		 The morphism $\pi_-$ is finite since $y$ is forced to be integral, but there is no reason, at least a priori, that $\pi_-$ is again flat.
		\begin{twr}\label{theorem-abb-}
			Let us choose a cocharacter $\psi\colon \G_m \to \G_m^2$, such that $y$ has a negative degree with respect to the induced grading.
			Let us restrict $B$ to $B^{+,\psi}$, the Białynicki-Birula cell associated to $\psi$. Then $\pi_{-}$ induces a rational $\G_m^2$-equivariant map $\phi^+ \colon B^{+,\psi} \to \Hilb^{d}(\A^2)$. The image of $\phi^+$ lies in the Białynicki-Birula cell associated to $\psi$ and centred at $E$. The map $\phi^+$ is \'etale in  a $\G_m^2$-stable neighbourhood of the origin.
		\end{twr}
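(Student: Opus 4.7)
The plan is to produce the morphism $\phi^+$ by proving that $\pi_-$ is finite and flat of rank $d$ over $B^{+,\psi}$; the remaining assertions about the image and étaleness then follow from equivariance and Theorem~\ref{infinitesimal-phi}.

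Finiteness of $\pi_-$ is immediate on all of $B$: adjoining $y^{tm_t}$ to the ideal makes $y$ integral over $\kk[\ab]$, and the first maximal minor of $\mathcal M$, being monic of degree $t$ in $x$, makes $x$ integral over $\kk[\ab][y]$. At the origin, $y^{m_t}\in E$ forces $y^{tm_t}\in E$, so the fiber of $\pi_-$ over $0$ agrees with that of $\pi$, namely $R/E$, and has length exactly $d$.

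The main technical obstacle is flatness. I would first localize at $0\in B$ and apply Nakayama's lemma to the finitely generated $\kk[\ab]$-module $M:=R[\ab]/(I_t(\mathcal M)+(y^{tm_t}))$: since $M\otimes\kappa(0)$ has dimension $d$, there is a Zariski-open neighbourhood $U$ of $0$ on which $M$ is generated by $d$ elements, giving a surjection $\O_U^{\oplus d}\twoheadrightarrow \widetilde M|_U$. Its kernel is coherent and vanishes modulo $\mathfrak m_0$, so a second application of Nakayama yields flatness of rank $d$ on some smaller $U$. To propagate this to all of $B^{+,\psi}$ I would use $\G_m$-equivariance of $\pi_-$: the ideal $(y^{tm_t})$ is a semi-invariant, the map $\pi_-$ is therefore $\G_m$-equivariant, and hence the fiber length $\ell(b)$ is constant along $\G_m$-orbits. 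Since the action on $B^{+,\psi}$ contracts every $b$ to the origin, every orbit meets $U$, and thus $\ell\equiv d$ on $B^{+,\psi}$. Because $B^{+,\psi}$ is a reduced Noetherian affine space, the standard criterion that a coherent sheaf of locally constant fiber rank is locally free promotes this to flatness of $\pi_-$ on all of $B^{+,\psi}$, producing the desired morphism $\phi^+\colon B^{+,\psi}\to \Hilb^d(\A^2)$.

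With $\phi^+$ in hand, its image lies in the Białynicki--Birula cell at $[E]$ because $\phi^+$ is $\G_m$-equivariant and $\phi^+(0)=[E]$, forcing $\lim_{t\to 0}\psi(t)\cdot\phi^+(b)=\phi^+(\lim_{t\to 0} t\cdot b)=[E]$ for every $b \in B^{+,\psi}$. Finally, étaleness at the origin reduces to Theorem~\ref{infinitesimal-phi}: by Krull's intersection theorem applied to the finitely generated $\kk[[\ab]]$-module $R[[\ab]]/\widehat{I_t(\mathcal M)}$, the element $y^{tm_t}$, which vanishes in the fiber at $0$, already lies in $\widehat{I_t(\mathcal M)}$, so $\pi_-$ and $\pi$ agree on the formal neighbourhood of the origin. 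Theorem~\ref{infinitesimal-phi} then identifies $\widehat{d\phi^+}$ with an isomorphism of completed local rings, and since $B^{+,\psi}$ and $\Hilb^d(\A^2)$ are smooth of the same dimension, this yields étaleness on a $\G_m^2$-stable Zariski neighbourhood of the origin.
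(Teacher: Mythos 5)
There are genuine gaps, and they cluster around the one place where the hypothesis $\deg_\psi(y)<0$ and the restriction to $B^{+,\psi}$ must actually be used — your argument never uses either, which is already a warning sign. The central problem is the flatness step. From $\dim_{\kappa(0)} M\otimes\kappa(0)=d$ and Nakayama you do get a surjection $\O_U^{\oplus d}\twoheadrightarrow\widetilde M|_U$ near $0$, but the claim that its kernel ``vanishes modulo $\mathfrak m_0$'' does not follow: right-exactness of $\otimes\kappa(0)$ only gives that $K\otimes\kappa(0)\to\kappa(0)^{\oplus d}$ has zero image, not that $K\otimes\kappa(0)=0$; the latter is equivalent to $\mathrm{Tor}_1(M,\kappa(0))=0$, i.e.\ to the flatness you are trying to prove (compare $A=k[t]_{(t)}$, $M=A/(t)$, $d=1$). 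So the argument is circular. The subsequent propagation is also flawed: on $B^{+,\psi}$ the fixed locus $B^{0,\psi}$ can be positive-dimensional, and its points are \emph{not} contracted to the origin, so not every orbit meets $U$; moreover the theorem only asserts a \emph{rational} map, and flatness on all of $B^{+,\psi}$ is more than one should expect. The paper's actual mechanism, which your proposal omits, is the resultant: on $B^{+,\psi}$ homogeneity forces $\res_E=y^{tm_t}\bigl(u+yW(\ab,y)\bigr)$ with $u$ invertible near $0$, whence $I=\bigl(I+(y^{tm_t})\bigr)\cap\bigl(I+(u+yW)\bigr)$ with coprime factors, so $\V(I+(y^{tm_t}))$ is open and closed in $\V(I)$ and $\pi_-$ inherits flatness from $\pi$ (Lemma~\ref{resultant-flat}) together with the full degree $d$. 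Without this decomposition you have no control over length of the fibre of $\pi_-$ escaping the axis $y=0$, which is exactly the failure mode $\pi_-$ is designed to cure.

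The étaleness step has a similar gap. Krull's intersection theorem does not show $y^{tm_t}\in\widehat{I_t(\mathcal M)}$: an element vanishing in the special fibre of a finite flat family lies in $\hat I+\mathfrak m\cdot R[[\ab]]$, nothing more. Over all of $B$ the claim is simply false — for $E=(x,y)$ one has $R[[\ab]]/\hat I\cong\kk[[a_{1,1},a_{2,1}]]$ with $y=-a_{1,1}\neq 0$. The statement does hold over the completion of $B^{+,\psi}$ at the origin, but again only because $\res_E=y^{tm_t}(u+yW)\in\hat I$ with $u+yW$ a unit in the (local, complete) algebra $R[[\ab]]/\hat I$ — i.e.\ for the same resultant reason as above, not for the general one you give. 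You would need to supply the resultant factorization to repair both halves of the proof.
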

		\begin{proof}
			We imitate the proof of Theorem \ref{theorem-abb}.
			Let us first consider the morphism $\pi$ on $B^{+}$, and denote the ideal $I_t(\mathcal M)$ by $I$. The resultant $\res_E$ (on $B^+$) is homogeneous of degree $\deg_{\psi}(y)(tm_t)$, since $\deg(y)<0$, it is a negative number. Let $u \in \kk[\ab]$ be a coefficient with $y^{tm_t}$, then
			$$ \res_E = uy^{tm_t}+y^{tm_t+1}W(\ab,y) = y^{tm_t}(u+yW(\ab,y))$$
			Let us restrict ourselves to the open locus of $B^+$ where $u$ is invertible.
			We want to argue that the subscheme containing those points of $\V(I)$ that are not supported on $y=0$ is closed. 
			Let us observe that $$(I+(y^{tm_t})) \ \cap \ (I+(u+yW(\ab,y))) = I,$$
			that is because the principal ideals $(y^{tm_t})$ and $(u+yW(\ab,y))$ are coprime.
			The ideal $I+(u+yW(\ab,y))$ cuts exactly part of $\V(I)$ that is not supported on $y=0$. Thus its complement $\V(I+(y^{tm_t}))$ is open in $\V(I)$, therefore $\pi_-$ is flat over open locus of $B^+$ where $u$ is invertible. 
			By adding $y^{tm_t}$ to the ideal we have enforced finiteness and as a consequence get the induced rational map $\phi^+$
			\vskip 0.2 cm
			The last part follows from Theorem \ref{infinitesimal-phi}.
		\end{proof}
		In the case of $\deg_{\psi}(y)<0$ we have also an analogue of Lemma \ref{easy-ABB}.
		\begin{stwr} \label{easy-ABB-}
		Let us take the setup from Theorem \ref{theorem-abb-} but also let us 
		assume that the only $\psi(\G_m)$-fixed point on $B^+$ is the origin. Then the induced map $\phi^{+}$ is an isomorphism onto the Białynicki-Birula cell.
		\end{stwr}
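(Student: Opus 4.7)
The plan is to mirror the argument of Lemma \ref{easy-ABB}, with the necessary modifications imposed by working with $\pi_-$ instead of $\pi$ and by the fact that $\deg_\psi(y)$ is now negative. The starting point is the factorization
$$ \res_E = u y^{tm_t} + y^{tm_t+1} W(\ab, y) $$
from the proof of Theorem \ref{theorem-abb-}. Since $\res_E$ is $\psi$-homogeneous of degree $\deg_\psi(y) \cdot tm_t$, the coefficient $u \in \kk[\ab]$ is $\psi$-homogeneous of degree $0$. The hypothesis that the origin is the unique $\psi(\G_m)$-fixed point on $B^{+}$ is equivalent to the statement that every parameter $a_{i,j,k}$ appearing in $B^{+}$ satisfies $\deg_\psi(a_{i,j,k}) > 0$. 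Hence $u$ cannot depend on any parameter, so it is a constant, and by the computation in Example \ref{res-origin} this constant equals $1$. In particular $u$ is invertible on all of $B^{+}$.

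Given this, the proof of Theorem \ref{theorem-abb-} shows that $\pi_-$ is flat on the entire cell $B^{+}$, and it is automatically finite because of the addition of $y^{tm_t}$ to the defining ideal. Therefore $\pi_-$ induces a globally defined $\G_m^2$-equivariant morphism $\phi^{+} \colon B^{+} \to \Hilb^d(\A^2)$, whose image lies inside the Białynicki-Birula cell of $[E]$ by Theorem \ref{theorem-abb-}. What remains is to verify that $\phi^{+}$ is an isomorphism onto this cell, which I intend to do via Lemma \ref{ABB-simples}. On the source side the hypotheses of that lemma are built in: $B^{+}$ is an affine space, contracted by $\psi(\G_m)$ to the origin, which is the only fixed point. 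On the target side, the Białynicki-Birula cell of $[E]$ in $\Hilb^d(\A^2)$ is also an affine space with contracting $\psi(\G_m)$-action by smoothness of the Hilbert scheme; moreover any fixed $\xi$ in the cell satisfies $\xi = \lim_{t\to 0} \psi(t) \cdot \xi = [E]$, so $[E]$ is the only fixed point.

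Finally, the differential of $\phi^{+}$ at the origin is obtained by restricting the equivariant isomorphism $d\phi_E$ of Theorem \ref{iso-tangent} to the positive-$\psi$-weight part, so it is again an isomorphism; in particular, source and target have the same dimension. Lemma \ref{ABB-simples} then concludes that $\phi^{+}$ is an isomorphism onto the Białynicki-Birula cell. The only non-routine step is the rigidity argument for $u$, where one must carefully combine $\psi$-homogeneity with the single-fixed-point hypothesis to promote ``$u$ is nonzero at the origin'' into ``$u$ is a nonzero constant everywhere''; the rest of the argument is a direct transcription of Lemma \ref{easy-ABB}.
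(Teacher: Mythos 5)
Your proposal is correct and takes exactly the route the paper intends: the paper's own proof of Corollary~\ref{easy-ABB-} consists of the single sentence ``Analogous to the proof of Lemma~\ref{easy-ABB},'' and what you have written is a faithful, correctly executed version of that analogy (constancy of the leading coefficient $u$ from $\psi$-homogeneity plus the single-fixed-point hypothesis, automatic finiteness of $\pi_-$, and Lemma~\ref{ABB-simples} applied via the tangent-space isomorphism of Theorem~\ref{iso-tangent}). In fact your write-up supplies more detail than the paper does, including the verification that $[E]$ is the unique fixed point of the target cell.
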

			\begin{proof}
				Analogous to the proof of Lemma \ref{easy-ABB}.
			\end{proof}
		\vskip 0.2 cm	
	
		Now we will apply our results to answer the question asked in \cite{RoserWinz-2}.\\
		Let $E=(x^{t},x^{t-1} y^{m_1},\ldots, xy^{m_{t-1}}, y^{m_t})\subset \kk[[x,y]]$ be a monomial ideal of finite colength, recall that ${d_i=m_i-m_{i-1}}$. 
		Let $\mathcal N_{ \leq \underline d}$ denote the affine space consisting of matrices $(n_{i,j})$ of size $(t+1)\times t$ with entries in  $\kk[y]$, such that

			\[\begin{cases}	 \label{RH-conditions}
					u_{i,j} &< \ord(n_{i,j}) < d_i\text{ , } i\leq j, \\
					u_{i,j} &\leq \ord(n_{i,j}) < d_j\text{ , } i> j, 
			\end{cases} \]
		where the matrix $U=(u_{i,j})$ is the total degree matrix $U(E)$ defined in Remark \ref{UxUymatrices}.
		The matrix $H$ is the Hilbert-Burch matrix $M_E$ from Definition \ref{matrix-general}. The scheme $\mathbf V(E)$ is the Gr\"obner cell of $E$ with respect to the negative degree lexicographical ordering, see Example \ref{exm-glex}.
		\begin{stwr} [{\cite[Conjecture 4.2]{RoserWinz-2}}]
			Let $E\subset \kk[[x,y]]$ be a monomial ideal of finite colength. Then the map
			\begin{align*}
				 \Phi_e \colon \mathcal N_{<\underline d} &\to \mathbf V(E) \\
				 N &\mapsto I_t(H+N)
			\end{align*}
		is an isomorphism.
		\end{stwr}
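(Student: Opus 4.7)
The plan is to identify $\Phi_e$ with the map $\phi^+$ from Corollary~\ref{easy-ABB-} for a carefully chosen cocharacter, and then invoke the dictionary of Table~\ref{table2} to identify the image with the Gr\"obner cell $\mathbf V(E)$. First, following Example~\ref{exm-glex}, I would pass from the negative degree lexicographical order to the cocharacter $\psi\colon t\mapsto (t^{-N(1-\varepsilon)},t^{-N})$ for $\varepsilon\in\Q_{>0}$ sufficiently small and $N\in\Z_{>0}$ divisible enough. Since $\deg_\psi(y)=-N<0$, Theorem~\ref{theorem-abb-} applies and yields the rational equivariant map $\phi^+\colon B^{+,\psi}\to\Hilb^d(\A^2)$ whose image lies in the Bia\l ynicki--Birula cell centred at $[E]$, which by Table~\ref{table2} is precisely $\mathbf V(E)$.

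The main computational step is to check that $B^{+,\psi}$ agrees, as an affine subspace of $B$, with $\mathcal N_{<\underline d}$. Using $\deg(a_{i,j,k})=(i-j,\,m_j-m_{i-1}-k)$, the condition that $a_{i,j,k}$ has strictly positive $\psi$-weight unfolds to
\[
k > (1-\varepsilon)(i-j) + (m_j - m_{i-1}) = u_{i,j} - \varepsilon(i-j),
\]
where $u_{i,j}=U(E)_{i,j}$ as in Remark~\ref{UxUymatrices}. For $\varepsilon$ small enough this reduces, since $k,u_{i,j}\in\Z$, to $k>u_{i,j}$ when $i\leq j$ and to $k\geq u_{i,j}$ when $i>j$, together with the original constraints $\deg_y a_{i,j}<d_j$ or $d_i$ built into the definition of $B$. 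These are exactly the inequalities cutting out $\mathcal N_{<\underline d}$, so the two affine spaces coincide and $\Phi_e$ coincides with the restriction of $\pi$ to $B^{+,\psi}$ after deleting the ambient $y^{tm_t}$-axis as in Figure~\ref{thicken-x-axis}.

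Next I would verify the hypothesis of Corollary~\ref{easy-ABB-}: that the origin is the only $\psi(\G_m)$-fixed point of $B^{+,\psi}$. A weight is zero on $B^{+,\psi}$ only if $(1-\varepsilon)(j-i)=k-u_{i,j}$ for integers $i,j,k$; choosing $\varepsilon$ of a sufficiently fine denominator makes this impossible whenever $i\neq j$, and the case $i=j$ gives $k=u_{i,i}=d_i$, which is excluded by the upper-degree bound $k<d_i$. Hence only the origin is fixed, and Corollary~\ref{easy-ABB-} upgrades $\phi^+$ to an isomorphism $B^{+,\psi}\xrightarrow{\sim}\mathbf V(E)$.

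The main obstacle I expect is the purely book-keeping inequality computation comparing the Roser--Winz degree conditions involving $U(E)$ with the $\psi$-weights on the parameters $a_{i,j,k}$; one has to track the interplay between the half-integer perturbation $\varepsilon$ and the integrality of exponents, and to be careful that the special parameters on the subdiagonal $(i+1,i)$ and the upper bounds $d_i,d_j$ in Definition~\ref{matrix-general} are consistent with the Roser--Winz conventions. Everything else is formal: once the two affine spaces are identified and the fixed-point hypothesis is checked, Corollary~\ref{easy-ABB-} together with the dictionary of Tables~\ref{table} and~\ref{table2} immediately yields the desired isomorphism $\Phi_e$.
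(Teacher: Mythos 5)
Your proposal is correct and follows essentially the same route as the paper: translate the negative degree lexicographic order into the cocharacter $\psi\colon t\mapsto(t^{-N(1-\varepsilon)},t^{-N})$ via Example~\ref{exm-glex} and Tables~\ref{table}--\ref{table2}, identify $\mathcal N_{<\underline d}$ with $B^{+,\psi}$ by the weight computation on the parameters $a_{i,j,k}$, and conclude by Corollary~\ref{easy-ABB-}. In fact you spell out the weight inequalities and the single-fixed-point hypothesis more explicitly than the paper's own proof, which only asserts the identification.
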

		\begin{proof}
			We are following the dictionaries from Tables \ref{table} and \ref{table2}. According to Example \ref{exm-glex}, $\mathbf V(E)$ is the Białynicki-Birula cell for the following grading:
			$$ \begin{cases}\deg(y)=-N \\
			 \deg(x)= -N(1-\varepsilon), \end{cases}$$
		 for $\epsilon \in \Q_{\geq 0}$ and $\epsilon N\in \Z_{\geq 0}$, chosen for $E$.
		 We can observe that $N_{\leq d}+H$ is the spread-out matrix $\mathcal M_E$ restricted to the Białynicki-Birula cell associated to the grading above. Thus the map $\Phi_E$ is the same map as $\phi^+$ considered in Corollary \ref{easy-ABB-}, thus making the statement the special case of Corollary \ref{easy-ABB-}.
		\end{proof}
	\section{Infinitesimal deformations} \label{chapter tangent2}
		In this section we use the results obtained in the previous sections to generalise Theorem \ref{infinitesimal-phi} from monomial ideals to any ideal. \vskip 0.2 cm
		The idea is the following. We know that the family controls higher-order infinitesimal deformations of the monomial ideal. Since every ideal can be deformed to a monomial one by going to the limit with $\G_m$-action, we want to translate the isomorphism of tangent spaces by the group action. Then we repeat the argument from the proof Theorem \ref{infinitesimal-phi} saying that by Schaps Theorem \ref{Hilbert-Burch} we get an isomorphism of completed local rings.
		\begin{twr} \label{infinitesimal-phi2} 
			Let us take an ideal $[I]\in \Hilb^d(\A^2)$. Every monomial ideal $E$ belonging to the closure of $\G_m^2$-orbit of $[I]$ induces an isomorphism of spectra of complete local rings.
			$$ \widehat{d\phi_{I,E}} \colon \Spec(\kk[\ab]) = \Spec(\widehat{\O_{B_E,b}})\to \Spec(\widehat{\O_{\Hilb^d,I}}),$$
			where $b\in B_E$ is a point such that the ideal of maximal minors of the matrix $\mathcal M_E$ over $B_E$ is equal to $I$.
		\end{twr}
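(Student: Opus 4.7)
The plan is to upgrade the \'etale rational map $\phi^+$ coming from Theorem~\ref{theorem-abb} or Theorem~\ref{theorem-abb-} into an \'etale covering of the whole Bia\l ynicki-Birula cell of $\Hilb^d(\A^2)$ centred at $[E]$, and then read off the desired isomorphism at a preimage of $[I]$. Since $[E]$ is a $\G_m^2$-fixed point lying in the closure of the $\G_m^2$-orbit of $[I]$, a Hilbert-Mumford type argument on the quasi-projective scheme $\Hilb^d(\A^2)$ produces a cocharacter $\psi\colon \G_m \to \G_m^2$ such that $\lim_{t\to 0}\psi(t)\cdot [I]=[E]$; in particular $[I]$ lies in the Bia\l ynicki-Birula cell of $\Hilb^d(\A^2)$ associated to $\psi$ and centred at $[E]$.

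Depending on the sign of $\deg_\psi(y)$, I would apply Theorem~\ref{theorem-abb} or Theorem~\ref{theorem-abb-} to the pair $(E,\psi)$, obtaining a $\G_m^2$-equivariant rational map $\phi^+\colon B_E^{+,\psi}\to\Hilb^d(\A^2)$ which is \'etale on some $\G_m^2$-stable open neighbourhood $U$ of the origin, with image inside the above Bia\l ynicki-Birula cell. The next step is to promote this to the entire cell: the complement $B_E^{+,\psi}\setminus U$ is closed and $\G_m^2$-stable, but every $\psi$-orbit in the contracting cell $B_E^{+,\psi}$ accumulates at the origin, so any nonempty closed $\G_m^2$-stable subset of $B_E^{+,\psi}$ must contain $0\in U$. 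This contradiction forces $U=B_E^{+,\psi}$, so $\phi^+$ is defined and \'etale on all of $B_E^{+,\psi}$.

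I would then show that $\phi^+$ surjects onto the Bia\l ynicki-Birula cell of $\Hilb^d(\A^2)$ at $[E]$. By Theorem~\ref{iso-tangent} the differential $d\phi_E$ at the origin is a $\G_m^2$-equivariant isomorphism of tangent spaces, so restricting to the $\psi$-non-negative weight subspaces shows that $B_E^{+,\psi}$ and the target Bia\l ynicki-Birula cell have equal dimensions. Being \'etale between smooth varieties of equal dimension, $\phi^+$ has open image; this image is $\G_m^2$-stable and contains $[E]$, so the same contracting argument, now applied on the Hilbert scheme side, forces it to be the full Bia\l ynicki-Birula cell. Consequently there exists $b\in B_E^{+,\psi}$ with $\phi^+(b)=[I]$, and by construction the ideal of maximal minors of $\mathcal M_E$ at $b$ is exactly $I$, identifying this $b$ with the point appearing in the statement. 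As $\phi^+$ is \'etale at $b$, the induced homomorphism $\widehat{d\phi_{I,E}}$ on completed local rings is an isomorphism.

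The main obstacle I expect is the globalization in the middle step: Theorems~\ref{theorem-abb} and \ref{theorem-abb-} only assert \'etaleness in a neighbourhood of the origin, and the whole argument relies on combining $\G_m^2$-equivariance with the contracting action of $\psi$ to propagate both well-definedness and \'etaleness across the entire cell. A secondary technical point is the existence of the cocharacter $\psi$ with the prescribed limit, which should be supplied by the standard Bia\l ynicki-Birula/Hilbert-Mumford machinery for quasi-projective $\G_m^2$-schemes, but warrants a careful reference.
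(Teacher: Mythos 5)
Your opening move (producing a cocharacter $\psi$ with $\lim_{t\to 0}\psi(t)\cdot[I]=[E]$ via Kempf/Hilbert--Mumford and then invoking Theorem~\ref{theorem-abb} or \ref{theorem-abb-}) matches the paper. But the globalization step is a genuine gap: you claim that every $\psi$-orbit in $B_E^{+,\psi}$ accumulates at the origin, so any nonempty closed $\G_m^2$-stable subset contains $0$, forcing the \'etale locus $U$ to be all of $B_E^{+,\psi}$. This is false whenever the $\psi$-fixed locus of the cell is positive dimensional --- which is exactly the case this theorem must cover beyond Lemma~\ref{easy-ABB} and Corollary~\ref{easy-ABB-}. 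A point with a nonzero coordinate of $\psi$-weight zero does not flow to the origin, and a closed $\G_m^2$-stable subset avoiding the origin can be nonempty: for $E=(x^2,y^2)$ the parameters $a_{1,2,1},a_{2,1,1}$ have opposite weights, the locus $\{ab=1\}$ is closed, $\G_m^2$-stable, misses the origin, and by Example~\ref{exm-resultant} is precisely where the resultant vanishes and $\pi$ fails to be flat --- so $\phi^+$ genuinely cannot be extended over the whole cell, and the surjectivity claim fails by the same mechanism. The paper instead only shows that the locus where the tangent map is an isomorphism is an open $\G_m^2$-stable subscheme of the cell containing the origin, and argues that this locus contains the preimage of $[I]$.

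The second gap is a dimension mismatch in your final sentence. Your $\phi^+$ is a map from the cell $B_E^{+,\psi}$, which in general has dimension strictly less than $2d$; ``\'etale at $b$'' therefore identifies the completed local ring of the cell at $b$ with that of the Bia\l ynicki-Birula cell of the Hilbert scheme at $[I]$, not $\widehat{\O_{B_E,b}}=\kk[[\ab]]$ with $\widehat{\O_{\Hilb^d,I}}$ as the statement requires. The paper's essential additional ingredient, which your proposal omits, is to pass to a first-order thickening $\mathbf S$ of $S=B_E^{+,\psi}$ inside $B_E$ with $T_b\mathbf S=T_bB_E$ for all $b\in S$, verify finiteness and flatness of the family over $\mathbf S$ (Nakayama plus the Schaps part of Theorem~\ref{Hilbert-Burch}), obtain the full tangent map $i^*TB_E\to(\phi^+)^*T\Hilb^d(\A^2)$, deduce it is an isomorphism on a $\G_m^2$-stable open set containing $b$, and only then bootstrap from the tangent-space isomorphism to the isomorphism of completed local rings exactly as in the proof of Theorem~\ref{infinitesimal-phi}. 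Without this thickening and the final formal-neighbourhood argument, your proof cannot reach the statement as formulated.
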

			\begin{proof}
				Let us take a monomial ideal $E$ such that $[E]\in \overline{\G_m^2 \cdot [I]}$.
				The claim is preserved under the field extension so we may assume that $\kk$ is algebraically closed, thus perfect.
				Now we can use the classical result by Kempf \cite[Corollary 4.3]{Kempf} to say that if the monomial ideal $[E]$ is inside the closure of the orbit $\G_m^2 \cdot [I]$, then there exists a one-parameter subgroup $\G_m \cong G \subset \G_m^2$ such that $E \in \overline{G \cdot [I]}$. We choose a cocharacter $\psi\colon \G_m \to \G_m^2$ such that $\psi(\G_m)=G$ with the action of $\G_m$:
				\begin{align}
						   \G_m \times \Hilb^d(\A^2) &\to \Hilb^d(\A^2),\\
						 (t,[I]) &\mapsto (t\cdot [I]),
				\end{align}
				such that \[ \lim_{t\to 0} t\cdot [I] = [E]. \]
				As a consequence, $[I]$ lies in Białynicki-Birula cell associated to $\psi$ and centred at $E$.
				There are two cases, either the cocharacter $\psi$ contracts $y$-axis or it expands it. In the first case we consider the family constructed in Theorem \ref{theorem-abb}
				$$\begin{tikzcd}
					\V(I_t(\mathcal M_E))_{|S} \arrow[r] \arrow[d, "\pi_{|S}"] & \V(I_t(\mathcal M_E)) \arrow[d,"\pi"] \\
					S \arrow[r]                                           & B_E,                           
				\end{tikzcd}$$
				where $S$ is the Białynicki-Birula cell $B_E^{+,\psi}$. For simplicity we will denote $\V(I_t(\mathcal M_E))$ as $V(I)$.
				In the second case we follow the discussion of Diagram ~\ref{thicken-x-axis} and Theorem \ref{theorem-abb-} and consider 
				$$\begin{tikzcd}
					\V(I_t(\mathcal M_E)+y^{tm_t})_{|S} \arrow[r] \arrow[d, "{\pi_{-}}_{|S}"] & \V(I_t(\mathcal M_E)+y^{tm_t}) \arrow[d, "\pi_-"] \\
					S \arrow[r]                                           & B_E,                            
				\end{tikzcd}$$
				again $S$ is the Białynicki-Birula cell $B_E^{+,\psi}$, and we denote $\V(I_t(\mathcal M_E)+y^{tm_t})$ as $V(I+y^{tm_t})$. 
				We will consider both cases simultaneously.
				\vskip 0.2 cm
				Let us take $\mathbf S \subset B_E$, a first-order thickening of $S$ in $B_E$ such that $T_b \mathbf S = T_b B_E$ for every point $b$ of $S$. Now we argue that $V(I)_{|\mathbf S}$ (resp. $V(I+y^{tm_t})$) induces a morphism to the Hilbert scheme.
				By Nakayama Lemma \cite[\href{https://stacks.math.columbia.edu/tag/051F}{Tag 051F}]{stacks-project}, the fact that the morphism $\pi_{|S}$ (resp. ${\pi_-}_{|S}$) is finite implies that the morphism $\pi_{|\mathbf S}$ (resp. ${\pi_-}_{|\mathbf S}$)  is also finite. By Schaps Theorem \ref{Hilbert-Burch} we get also flatness of $\pi_{|\mathbf S}$. In the second case we are considering $\pi_-$ instead of $\pi$.  But since $V(I+y^{tm_t})_{|S}$ is open and closed in $V(I)_{|S}$, its thickening is also open and closed, thus flatness of $\pi_{|\mathbf S}$ implies flatness of ${\pi_{-}}_{|\mathbf S}$.
				As a result, we get a morphism $\phi_{\mathbf S}\colon \mathbf S \to \Hilb^d(\A^2)$.
				\vskip 0.2 cm
				Let us denote the embedding of $S$ into $B_E$ by $i\colon S \to B_E$.
				Then the morphism $\phi_{\mathbf S}$ obtained above induces the following commutative diagram:
				$$\begin{tikzcd}
					i^* TB_E \arrow[d] \arrow[r] & T \Hilb^d(\A^2) \arrow[d] \\
					S \arrow[r, "\phi^+"]        & \Hilb^d(\A^2),            
				\end{tikzcd}$$
				where $\phi^+$ is the morphism described in Theorem \ref{theorem-abb} (resp. Theorem \ref{theorem-abb-}).
				Thus we get a morphism of sheaves:
				$$d\phi\colon  i^{*}TB_E \to (\phi^{+})^*T\Hilb^d(\A^2)$$
				The morphism $d\phi$  restricted to $0 \in B_E$ is the isomorphism of tangent spaces $d\phi_E$ described in Theorem \ref{iso-tangent}. As a consequence $d\phi$ is an isomorphism on some open subscheme $U$ of $S$.
				Note that since $\pi$ (resp $\pi_-$) is $\G_m^2$-equivariant, so is the morphism $d\phi$. And thus $U$ is a $\G_m^2$-stable subscheme of $S$. Consequently it contains the preimage of $[I]$ under the map $\phi^+$. 
				\vskip 0.2 cm
				This way we have obtained a map of tangent spaces at any point $b\in B_E$ such as in the statement of the theorem.
				The rest is to extend this morphism to a morphism of spectra of complete local rings. The argument follows identically as in Theorem \ref{infinitesimal-phi}.
			\end{proof}
	
	\section{Where the Dream scenario holds} \label{chapter8}
		So far we have shown that our strategy shows that our family of matrices induces maps between the Białynicki-Birula cells. In this section, we see examples when the dream scenario from Section \ref{chapter5} occurs and it is not a direct consequence of Theorem \ref{theorem-abb}.
		\begin{exm} \label{example-200}
			Let us consider the monomial ideal $E=(y^2,x^3)$. Let us plot the weights of a few of the parameters.
			$$	\includegraphics[scale=0.5]{example200.png} $$

		We see that there is no half-plane through the origin containing all the weights. This means that no open neighbourhood of the origin is fully contained in a Białynicki-Birula cell for some ~$\psi$. Thus the potential existence of the rational map to the Hilbert scheme cannot be predicted by Theorem \ref{theorem-abb}. 		
	
		Let us consider the spread-out matrix from Definition \ref{matrix-general}
		$$\mathcal M = \begin{pmatrix}
			y^{2}+a_{1,1}&a_{1,2}&a_{1,3}\\
			-x+a_{2,1}&1&0\\
			a_{3,1}&-x&1\\
			a_{4,1}&0&-x\\
		\end{pmatrix}.$$
		Since we are interested only in the degree of the resultant of the first and last minor, we can consider only the variables $b_{i,j}:=a_{i,j,1}$, since for every term in that the resultant that does contain $a_{i,j,0}$ there is always one with it replaced by $a_{i,j,1}$ and of degree one higher with respect to $y$.
		Thus, we can assume that our matrix is
		$$\mathcal M = \begin{pmatrix}
			y^{2}+y\,b_{1,1}&y\,b_{1,2}&y\,b_{1,3}\\
			y\,b_{2,1}-x&1&0\\
			y\,b_{3,1}&-x&1\\
			y\,b_{4,1}&0&-x\\
			\end{pmatrix}.$$

		The last minor is
		$$f_3= x^{2}y\,b_{1,3} -x\,y^{2}b_{2,1}b_{1,3}+x\,y\,b_{1,2}-y^{2}b_{2,1}b_{1,2}+-y^{2}b_{3,1}b_{1,3}+y^{2}+y\,b_{1,1}$$
		and the first one
		$$ f_0 =-x^{3} + x^{2}y\,b_{2,1}+x\,y\,b_{3,1}+y\,b_{4,1}.
		$$
		Their Sylvester matrix with respect to the variable ~$x$ can be treated as a matrix of polynomials in $y$. Let us look at the matrix of the degrees of those polynomials.
		$$\begin{pmatrix}
			0&1&1&1&-1\\
			-1&0&1&1&1\\
			1&2&2&-1&-1\\
			-1&1&2&2&-1\\
			-1&-1&1&2&2\\
		\end{pmatrix},$$
		where we denote degree of $0$ as $-1$.
		From this, by considering the Leibniz formula for determinants, we see that the degree of the resultant with respect to $y$ is lower or equal to $7$.
		Let us consider the $2\times 2$ minors as highlighted below
		$$\begin{pNiceMatrix}

				0 \Block[fill=red!60,rounded-corners]{1-2}{}&1&1&1&-1\\
			-1&0&1&1&1\\
			1 \Block[fill=red!60]{1-2}{} &2&2&-1&-1\\
			-1&1&2&2&-1\\
			-1&-1&1&2&2\\
		\end{pNiceMatrix}.$$
		The minor is
		 $$\begin{pmatrix}
			\vphantom{\left\{-5\right\}}-1&y\,b_{2,1}\\
			\vphantom{\left\{-7\right\}}y\,b_{1,3}&-y^{2}b_{2,1}b_{1,3}+y\,b_{1,2}\\
		\end{pmatrix}.$$
		The determinant of that minor has the degree equal to $1$ with respect to $y$.
		This means that the contribution of the first two columns to the degree of the resultant is at most $1$. The only way to get the degree $6$ out of the last three columns is by multiplying the following entries:
			$$\begin{pNiceMatrix}
					0 \Block[fill=red!30,rounded-corners]{1-1}{}&1&1&1&-1\\
				-1&0 \Block[fill=red!30]{1-1}{}&1&1&1\\
				1&2&2 \Block[fill=blue!30]{1-1}{}&-1&-1\\
				-1&1&2&2 \Block[fill=blue!30]{1-1}{}&-1\\
				-1&-1&1&2&2 \Block[fill=blue!30,rounded-corners]{1-1}{}\\
			
		\end{pNiceMatrix}.$$
		but this way we obtain the full degree of resultant to be $6$.
		We have shown that the degree of resultant is not greater than $6$, and thus by Lemma \ref{resultant-flat} and Lemma \ref{lemma-finitness} we obtain a rational map to the Hilbert scheme.
				\end{exm}		
		An almost identical argument can be made to show that we get a rational map for the following monomial ideals:	
		$$\left(y^{5},\,x\,y^{4},\,x^{2}y^{3},\,x^{3}\right)$$	
		$$\left(y^{8},\,x\,y^{6},\,x^{2}y^{4},\,x^{3}\right)$$
		$$\left(y^{11},\,x\,y^{8},\,x^{2}y^{5},\,x^{3}\right)$$
		$$\left(y^{14},\,x\,y^{10},\,x^{2}y^{6},\,x^{3}\right)$$

		\begin{quest} \label{quest}
			What are the monomial ideals $E$ such that the construction from Definition \ref{matrix-general} induces a rational map to the Hilbert scheme, but there is no open Białynicki-Birula cell containing $E$. 
			Can one distinguish specific infinite classes of such ideals?
		\end{quest}
		
	\bibliographystyle{alpha}
	\bibliography{references}

\begin{thebibliography}{{Sta}22}

\bibitem[CLO15]{Cox-book}
David~A. Cox, John Little, and Donal O'Shea.
\newblock {\em Ideals, varieties, and algorithms}.
\newblock Undergraduate Texts in Mathematics. Springer, Cham, fourth edition,
  2015.
\newblock An introduction to computational algebraic geometry and commutative
  algebra.

\bibitem[Con11]{Alexandru}
Alexandru Constantinescu.
\newblock Parametrizations of ideals in {$K[x,y]$} and {$K[x,y,z]$}.
\newblock {\em J. Algebra}, 346:1--30, 2011.

\bibitem[CV08]{Aldo-HB}
Aldo Conca and Giuseppe Valla.
\newblock Canonical {H}ilbert-{B}urch matrices for ideals of {$k[x,y]$}.
\newblock volume~57, pages 157--172. 2008.
\newblock Special volume in honor of Melvin Hochster.

\bibitem[EGL01]{ESG}
Geir Ellingsrud, Lothar G\"{o}ttsche, and Manfred Lehn.
\newblock On the cobordism class of the {H}ilbert scheme of a surface.
\newblock {\em J. Algebraic Geom.}, 10(1):81--100, 2001.

\bibitem[Eis95]{eisenbud}
David Eisenbud.
\newblock {\em Commutative algebra}, volume 150 of {\em Graduate Texts in
  Mathematics}.
\newblock Springer-Verlag, New York, 1995.
\newblock With a view toward algebraic geometry.

\bibitem[ES87]{ES1}
Geir Ellingsrud and Stein~Arild Str{\o}mme.
\newblock On the homology of the {H}ilbert scheme of points in the plane.
\newblock {\em Invent. Math.}, 87(2):343--352, 1987.

\bibitem[ES88]{ES-2}
Geir Ellingsrud and Stein~Arild Str{\o}mme.
\newblock On a cell decomposition of the {H}ilbert scheme of points in the
  plane.
\newblock {\em Invent. Math.}, 91(2):365--370, 1988.

\bibitem[Eva04]{Evain}
Laurent Evain.
\newblock Irreducible components of the equivariant punctual {H}ilbert schemes.
\newblock {\em Adv. Math.}, 185(2):328--346, 2004.

\bibitem[Fog68]{Fogarty}
John Fogarty.
\newblock Algebraic families on an algebraic surface.
\newblock {\em Amer. J. Math.}, 90:511--521, 1968.

\bibitem[GP08]{Greuel-book}
Gert-Martin Greuel and Gerhard Pfister.
\newblock {\em A {\bf {S}ingular} introduction to commutative algebra}.
\newblock Springer, Berlin, extended edition, 2008.
\newblock With contributions by Olaf Bachmann, Christoph Lossen and Hans
  Sch\"{o}nemann, With 1 CD-ROM (Windows, Macintosh and UNIX).

\bibitem[Hai98]{Haiman-catalan}
Mark Haiman.
\newblock {$t,q$}-{C}atalan numbers and the {H}ilbert scheme.
\newblock volume 193, pages 201--224. 1998.
\newblock Selected papers in honor of Adriano Garsia (Taormina, 1994).

\bibitem[Har10]{hardef}
Robin Hartshorne.
\newblock {\em Deformation Theory}, volume 257.
\newblock 01 2010.

\bibitem[Hui05]{ES-2fix}
Mark Huibregtse.
\newblock On {E}llingsrud and {S}tr{\o}mme's cell decomposition of {${\rm
  Hilb}^n_{A^2_{\bf C}}$}.
\newblock {\em Invent. Math.}, 160(1):165--172, 2005.

\bibitem[HW21]{RoserWinz-1}
Roser Homs and Anna-Lena Winz.
\newblock Canonical {H}ilbert-{B}urch matrices for power series.
\newblock {\em J. Algebra}, 583:1--24, 2021.

\bibitem[HW23]{RoserWinz-2}
Roser Homs and Anna-Lena Winz.
\newblock Deformations of local artin rings via {H}ilbert-{B}urch matrices,
  2023.

\bibitem[JS19]{ABB_JJLS}
Joachim Jelisiejew and \L~ukasz Sienkiewicz.
\newblock Bia\l ynicki-{B}irula decomposition for reductive groups.
\newblock {\em J. Math. Pures Appl. (9)}, 131:290--325, 2019.

\bibitem[Kem78]{Kempf}
George~R. Kempf.
\newblock Instability in invariant theory.
\newblock {\em Ann. of Math. (2)}, 108(2):299--316, 1978.

\bibitem[{Sta}22]{stacks-project}
The {Stacks Project Authors}.
\newblock {\itshape Stacks Project}.
\newblock \url{http://stacks.math.columbia.edu}, 2022.

\end{thebibliography}
	\bibliographystyle{alpha}
\end{document}